\documentclass[11pt]{article}

\usepackage[nohead,margin=1in]{geometry}
\usepackage{amssymb, amsmath, amsthm,amsfonts}
\usepackage{amsmath}
\usepackage{graphicx,epsfig}
\usepackage[tight]{subfigure}
\usepackage{cite}
\usepackage{times}
\usepackage{float}
\graphicspath{{./pics/}}
\usepackage{color}
\usepackage{epstopdf}
\usepackage{mathrsfs}
\usepackage{mathtools}
\usepackage{threeparttable}
\usepackage{algorithm,algorithmic}
\usepackage{booktabs}
\usepackage{verbatim}
\usepackage{bm}

\usepackage[colorlinks,linktocpage,linkcolor=blue]{hyperref}

\numberwithin{equation}{section}
\usepackage{array}
\newtheorem{thm}{Theorem}[section]
\newtheorem{lem}{Lemma}[section]
\newtheorem{remark}{Remark}[section]

\newtheorem{coro}{Corollary}[section]
\newtheorem{assum}{Assumption}[section]

\allowdisplaybreaks

\renewcommand{\d}{\mathrm{d}}

\def\F{\mathcal{F}}
\def\G{\mathcal{G}}
\def\e{{\rm e}}
\newcommand{\Tau}{\bm{\tau}}

\title{Exponential Low-Regularity Parareal Algorithms for  \\Nonlinear Schr\"{o}dinger Equations\thanks{This work is partially supported by National Natural Science Foundation of China (Project 12422117 and Project 12426312),
Hong Kong Research Grants Council (Project 15302323) and an internal grant of Hong Kong Polytechnic University (Project P0053938, Work Programme: 4-ZZVA).}}
\author{ 
Qingle Lin\thanks{Department of Applied Mathematics, The Hong Kong Polytechnic University, Kowloon, Hong Kong, P.R. China (\texttt{qingle.lin@connect.polyu.hk, zhizhou@polyu.edu.hk})}
\and Zhi Zhou\footnotemark[2]
}

\begin{document}

\maketitle
 
\begin{abstract}
The parareal algorithm is one of the most widely studied parallel-in-time methods for the numerical approximation of time-dependent problems. For non-diffusive equations, however, standard parareal methods may converge slowly or even become unstable due to the absence of damping, while nonlinear interactions can transfer and amplify phase errors across Fourier modes. In this work, we consider the nonlinear Schr\"odinger equation (NLS) as a representative non-diffusive model and analyze parareal algorithms with an exact fine propagator, with particular emphasis on the design of suitable coarse propagators. We establish a general convergence framework, valid for solutions with limited regularity, under stability and local truncation error assumptions on the coarse propagator. These assumptions are verified for selected exponential low-regularity integrators designed for one-dimensional quadratic and cubic NLS equations, which achieve optimal approximation orders without derivative loss. To the best of our knowledge, this is the first construction of parareal algorithms for NLS equations that are provably linearly convergent, with a contraction factor proportional to the coarse time-step size even for solutions of limited regularity. Numerical experiments on quadratic, cubic, and quintic NLS equations demonstrate rapid convergence and improved performance over parareal variants using classical coarse propagators, including Lie and Strang splitting methods and first- and third-order exponential Runge--Kutta integrators.

\vskip5pt
\noindent\textbf{Keywords}: nonlinear Schr\"odinger equation; parareal algorithm; parallel-in-time integration; coarse propagator; exponential low-regularity integrator; convergence analysis

\vskip5pt
\noindent\textbf{AMS subject classifications}: 65M12, 65Y05
\end{abstract}

\section{Introduction}
In this paper, we consider the parallel-in-time (PinT) numerical approximation of the nonlinear Schr\"{o}dinger equation~(NLS) 
\begin{equation}\label{eqn:NLSE}
    i\partial_{t} u=-\Delta u+F(u,\overline{u}),\quad~(t,x) \in (0,T)\times\mathbb{T}^d,
\end{equation}
subject to the initial condition
$u(0)=u_0 \in H^r(\mathbb{T}^d)$. Here, $\Delta=\sum_{i=1}^{d} \partial_{x_i}^2$ and $\mathbb{T}^d=(0,2\pi)^d$ denotes the $d$-dimensional torus, corresponding to periodic boundary conditions.
NLS-type models arise in many areas of physics, including nonlinear optics, Bose--Einstein condensates, deep water waves, and plasma physics~\cite{hasegawa1973transmission,gross1961structure,pitaevskii1961vortex,PhysRevLett.106.204502,BERGE1998259}.
The numerical approximation of~\eqref{eqn:NLSE} is typically carried out using sequential time-stepping schemes, which advance the solution incrementally in time. This inherently sequential structure constitutes a major computational bottleneck, particularly for long-time simulations. Moreover, as processor clock speeds have approached fundamental physical limits, further improvements in computational performance must increasingly rely on greater parallelism through the use of larger numbers of cores~\cite{MR4926314}.

PinT methods have been an active area of research for more than two decades in the numerical solution of large-scale evolution problems, particularly when spatial parallelism becomes saturated. The earliest pioneering work can be traced back to Nievergelt in 1964~\cite{MR176617}. Several well-established PinT algorithms, such as the parareal method~\cite{MR1842465,MR2306258}, multigrid reduction in time (MGRIT)~\cite{MR3716570,MR3499068}, parallel full approximation scheme in space and time (PFASST)~\cite{MR2979518,MR3504550}, diagonalization technique~\cite{MR4009694,MR2954747,WuZhouZhou:2022}, and Laplace transform~\cite{MR1648403,MR1975267} have proven highly effective, with rigorous error estimates. For a broader overview, we refer readers to the comprehensive review in \cite{Gander:2015,OngSchroder:2020,MR4926314} and references
therein. These algorithms have achieved considerable success when applied to ordinary differential equations and diffusive partial differential equations.

In the pioneering work of Lions, Maday, and Turinici~\cite{MR1842465}, the parareal algorithm was introduced as a PinT method for evolution problems. The method partitions the time interval into subintervals, initializes the solution with a computationally cheap coarse propagator (CP), and then iteratively corrects it in parallel on each subinterval using a more accurate but more expensive fine propagator (FP). In this framework, the CP provides a low-resolution approximation, while the FP delivers high-resolution accuracy. Since its introduction, the parareal algorithm has been successfully applied to a wide range of problems, including fluid dynamics~\cite{MR2235770,MR3617079}, molecular dynamics~\cite{MR4369575,audouze:hal-00358459}, optimal control~\cite{MR2639234,MR4150714}, and stochastic models~\cite{MR4049401,MR4879400}, among others.

Despite these successes, the standard parareal algorithm often performs poorly for non-diffusive problems, where the lack of natural damping presents a fundamental challenge for many parallel-in-time (PinT) algorithms \cite{MR4926314,MR3989875}. For linear problems, classical Runge-Kutta methods used as the CPs lead to slow convergence or even divergence~\cite{10.1007/978-3-030-75933-9_5,MR2306258}. By contrast, exponential-type integrators can resolve the phase discrepancy between the CP and FP exactly, thereby enhancing the stability of parareal \cite{MR3817021,MR3033094}. For nonlinear problems, the accuracy of the correction term at each parareal iteration depends on the regularity of the solution from the previous iteration~\cite{MR3033060,bal2005convergence}. This dependence, together with the lack of a smoothing effect, leads to fundamental challenges in achieving rapid and robust convergence. As an illustrative example, in \cite{linear_conv}, we analyze the convergence of parareal algorithms for semilinear parabolic equations. A key step in the convergence analysis is to prove the following estimate, 
for some $r \in [0,2]$, 
\begin{equation*}
    \Big\| \sum_{j=0}^{N_{c}-1} \e^{-j\Tau A}\big( \left( \mathcal{F}_{\Tau}-\mathcal{G}_{\Tau} \right) \left( u_j \right) -\left( \mathcal{F}_{\Tau}-\mathcal{G}_{\Tau}\right) \left( v_j\right) \big)\Big\|_{H^{r}} \leq C\Tau \max_{0\leq j\leq N_{c}} \|u_j-v_j\|_{H^{r}},  
\end{equation*}
where $A=-\Delta$, $\mathcal{F}_{\Tau}$ and $\mathcal{G}_{\Tau}$ denote the fine and coarse propagators, respectively, $\Tau$ is the coarse time-step size, $N_c$ is the number of coarse intervals, and $C$ is a constant independent of $\Tau$ and $N_c$. An important feature of this estimate is that the norms on both sides are taken in the same space, which is crucial for the subsequent analysis.
This estimate is crucially based on the parabolic smoothing property $\| A\e^{-j\Tau A}\|_{L(H^{r})} \leq \left( j\Tau \right)^{-1}$. However, this property fails when $A = i\Delta$ in~\eqref{eqn:NLSE}, since the solution operator $\e^{i\Tau\Delta }$ is unitary and hence has no smoothing effect. 
Therefore, one cannot expect the above estimate to hold for standard integrators in this setting. Dai and Maday also identified the loss of regularity as the central obstacle in the analysis of parareal algorithm \cite{MR3033060}. This observation motivates the use of numerical schemes that attain the optimal approximation order without derivative loss.  More recently, Buvoli and Minion revisited non-diffusive equations by repartitioning the problem and employing exponential integrators for both the CP and FP~\cite{MR4668796}. However, their analysis is restricted to the Dahlquist equation, and their fast convergence requires high regularity because of the repartitioning strategy.

In this work, we develop and analyze parareal algorithms for the nonlinear Schr\"odinger equation (NLS), assuming that the fine propagator is exact, and investigate suitable choices of CPs. In particular, we rigorously show that the parareal exhibits fast convergence behavior, under some stability and local truncation error assumptions on the coarse propagator, allowing for solutions with limited regularity.  In particular, under some conditions on the CP (in Assumption \ref{assum:cp}),  we prove that (in Theorem \ref{thm:general_conv})
\begin{equation}\label{eqn:error-main}
 \max_{1 \le n\le N_c}  \|U_{n}^{k}-U_n \|_{H^r} \leq  C_1 (C_2 \Tau )^{k},\quad r>d/2.
\end{equation}
Here, $U_n=u(T_n)$ denotes the exact solution of the NLS~\eqref{eqn:NLSE} at time level $T_n=n\Tau$, while $U_n^k$ denotes the numerical solution produced by the parareal algorithm at the $k$-th iteration and at time level $T_n$.
This estimate shows that the error decreases geometrically with respect to the parareal iteration index $k$, provided that $\Tau$ is sufficiently small. Moreover, we verify those conditions in Assumption \ref{assum:cp} for certain exponential low-regularity integrators specifically designed for one-dimensional quadratic and cubic NLS equations, which are capable of attaining the optimal approximation order without derivative loss. With these theoretically grounded choices of coarse propagators, the resulting exponential low-regularity parareal algorithm converges linearly, with a contraction factor proportional to the coarse time-step size $\Tau$, as indicated in the error estimate \eqref{eqn:error-main}. To the best of our knowledge, this is the first work to construct parareal algorithms for NLS equations that are provably linearly convergent, achieved through suitably designed exponential low-regularity coarse propagators. Finally, numerical experiments on the quadratic, cubic, and quintic Schr\"{o}dinger equations demonstrate the rapid convergence of the proposed exponential low-regularity parareal algorithm, compared with several widely used classical solvers, including Lie splitting, Strang splitting, and first- and third-order exponential Runge--Kutta methods.

The development of exponential low-regularity integrators~(ELRIs) originated with~\cite{MR3807360} and has since led to a variety of numerical schemes for dispersive and hyperbolic problems. In contrast to time-stepping schemes based on Taylor expansions and classical exponential integrators, ELRIs are built upon the Duhamel formula and a careful, equation-specific treatment of the nonlinear term, thereby avoiding the need for high-order derivatives of the solution.
Such schemes have been developed for the NLS~\cite{MR3807360,MR4385374}, the Korteweg--de Vries equation~\cite{wu2022optimal,li2025unfiltered,li2021convergence}, the nonlinear Klein--Gordon equation~\cite{wang2022symmetric}, and the nonlinear Dirac equation~\cite{schratz2021low}, among others.  An important subclass consists of ELRIs without loss of regularity, for which the error in $H^r$ can be bounded solely in terms of the $H^r$ norm of the initial datum~\cite{li2021convergence,MR4405493}. This property plays a key role in the analysis of the present paper. These schemes rely on a refined frequency-by-frequency analysis of the nonlinear interactions.

The rest of the paper is structured as follows. Section~\ref{sec:prelim} reviews the parareal algorithm and the exponential low-regularity integrators. In Section~\ref{sect:A motivating example}, we present a motivating example of the exponential low-regularity parareal algorithm applied to the quadratic nonlinear Schr\"{o}dinger equation on $\mathbb{T}$ and derive the corresponding error estimate for the corresponding parareal algorithm. In Section~\ref{sec:general case}, we introduce a general convergence framework for the exponential low-regularity parareal algorithm and verify the required assumptions for both quadratic and cubic Schr\"{o}dinger equations.
Finally, in Section~\ref{sec:numerical}, we present the robust performance of the proposed exponential low-regularity parareal algorithm for quadratic, cubic, and quintic Schr\"{o}dinger equations on $\mathbb{T}$, as well as for the cubic Schr\"{o}dinger equation on $\mathbb{T}^2$, and compare with some other popular schemes for NLS. 
Throughout the paper, the symbols $c$ and $C$ denote generic constants that may change from line to line, and $c\pm$ denotes $c\pm\epsilon$ for an arbitrarily small $\epsilon>0$.

\section{Preliminaries}\label{sec:prelim} In this section, we briefly introduce the parareal algorithm and the basic idea underlying the exponential low-regularity integrator for the NLS.

\subsection{Parareal method}
We begin with the introduction of the parareal algorithm, one of the most popular parallel-in-time algorithms. To discretize problem~\eqref{eqn:NLSE} in time, we divide the time interval $(0,T)$ into $N$ equal subintervals, each of length $\tau = T/N$. Let $\Tau = J\tau$ $(J\in \mathbb{N})$ be the coarse time-step size, $N_c = T/\Tau \in \mathbb{N}$ and $T_n = n \Tau$. 

Let $\F$ and $\G$ be two one-step methods that are respectively called the fine and coarse propagators. In practice, the CP $\mathcal{G}$ is often an inexpensive low-order  method, whereas the FP $\mathcal{F}$ is a high-order but expensive time integrator. Given any $v\in L^2(\mathbb{T}^d)$, the CP, denoted by $\mathcal{G}_{\Tau}(v)$, evolves the initial state $v$ with time $\Tau$, and the FP is denoted by $\mathcal{F}_{\Tau}(v)$. The sequential fine solution of~\eqref{eqn:NLSE} is given by
\begin{equation}\label{eqn:fp}
    U_{n+1} = \F_{\Tau}(U_n),\quad n=0,\dotsc,N_c -1, 
\end{equation}
starting from $U_0 = u_0$. The parareal method approximates the fine solution~\eqref{eqn:fp} via the following iteration:
\begin{equation}\label{eqn:parareal}
\left\{
\begin{aligned}
U_{n+1}^{0} &= \G_{\Tau}\bigl( U_{n}^{0} \bigr), 
&& n=0,\dots,N_{c}-1, \\[6pt]
U_{n+1}^{k+1} &= \G_{\Tau}\bigl( U_{n}^{k+1} \bigr) + \bigl( \F_{\Tau} - \G_{\Tau} \bigr) \bigl( U_{n}^{k} \bigr), 
&& n=0,\dots,N_{c}-1,\quad k=0,\dots,K.
\end{aligned}
\right.
\end{equation}
The algorithm proceeds as follows. First, the initial iteration $  k=0  $ is obtained by applying the inexpensive coarse propagator sequentially across all coarse intervals. In each subsequent iteration $  k+1  $, the correction terms $  ( \mathcal{F}_{\Tau} - \mathcal{G}_{\Tau} ) (U_n^k)  $ for $  n=0,\dotsc,N_c-1  $ are evaluated using the values from the previous iteration; these fine propagations over the coarse intervals can be performed independently and thus in parallel. The corrected values $  U_{n+1}^{k+1}  $ are then updated sequentially using the coarse propagator. In practice, the sequential fine solution~\eqref{eqn:fp} provides high temporal resolution. For convenience, we assume that the FP is exact, i.e.,
\begin{equation}\label{eqn:fp_}
    U_{n+1}=\F_{\Tau}\left( U_{n} \right) =U_{n}(\Tau)=\e^{i\Tau \Delta}U_{n}-i \int_{0}^{\Tau} \e^{i\left( \Tau -s \right) \Delta} F(U_n(s),\overline{U_n} (s)) \, \d s,
\end{equation}
where $U_n (s)$ denotes the exact solution of~\eqref{eqn:NLSE} with initial value $U_n$ at time $s$.

\subsection{Exponential low-regularity integrators}
Now we briefly describe the construction of the exponential low-regularity integrators for the NLS. In contrast to time-stepping schemes based on Taylor expansions and classical exponential integrators, ELRIs are built upon the Duhamel formula and a careful, equation-specific treatment of the nonlinear term, thereby avoiding the need for high-order derivatives of the solution.

In particular, applying Duhamel's formula to \eqref{eqn:NLSE}, the exact solution satisfies 
\begin{equation*}
u\left( T_{n+1} \right) =\e^{i\Tau \Delta}u\left( T_{n} \right) -i \int_{0}^{\Tau} \e^{i\left( \Tau -s \right) \Delta } F(u(T_n+s),\overline{u}(T_n+s))\, \d s.
\end{equation*}
In contrast to classical exponential integrators~\cite{MR2652783}, which approximate $  u(T_n + s) \approx u(T_n)$, the present scheme employs the {twisted} approximation $ u(T_n + s) \approx \e^{i s \Delta} u(T_n)$ and becomes
\begin{equation}\label{eqn:LRI1}
u\left( T_{n+1} \right) \approx \e^{i\Tau \Delta}u\left( T_{n} \right) -i \int_{0}^{\Tau} \e^{i\left( \Tau -s \right) \Delta } F(\e^{is\Delta}u(T_n),\e^{-is\Delta}\overline{u}(T_n))\, \d s. 
\end{equation}
This approximation is without loss of regularity for $r> d/2$ and $0\leq s \leq \Tau$, since 
\begin{equation*}
\begin{gathered}\| u\left( T_{n}+s \right) -\e^{is\Delta }u\left( T_{n} \right) \|_{H^{r}} \leq  \int_{0}^{s} \|F(u(T_n +\xi),\overline{u}(T_n +\xi)) \|_{H^{r}} \, \d \xi\\ \leq C\int_{0}^{s} \| u\left( T_{n}+\xi \right) \|_{H^{r}}^{q} \d\xi \leq C s \sup_{0\leq \xi \leq s} \| u\left( T_{n}+\xi \right) \|_{H^{r}}^{q},\end{gathered}
\end{equation*}
where $q$ denotes the polynomial degree of $F$. Several approaches exist for approximately evaluating this integral in~\eqref{eqn:LRI1}, leading to different exponential low-regularity integrators. A general framework is presented in~\cite{MR3807360,MR4275500}, which treats the dominant frequency exactly when $F(u,\bar{u})=|u|^{2p}u$, and this gives 
\begin{equation}\label{eqn:LRI_KSRN}
    u^{n+1}=\e^{i\Tau \Delta}\big( u^{n}-i\mu \left( u^{n} \right)^{p+1} \left( \varphi_{1} \left( -2i\Tau \Delta \right) ( \overline{u^{n}} )^{p} \right) \big),~ \text{with}~\varphi_{1} \left( z \right) ={(\e^{z}-1)}/{z}.
\end{equation}
Meanwhile, improved treatments are available in some cases~\cite{MR4385374,MR4312402,MR4730242,MR4634686,MR4405493}. 

Throughout the paper, we frequently use the following  estimate: for $u,v\in H^{r}(\mathbb{T}^d)$ and $r>d/2$,
\begin{equation}\label{eqn:bilinear}
    \|uv\|_{H^r} \leq C\|u\|_{H^r} \|v\|_{H^r}.
\end{equation}

\section{Motivating example: quadratic NLS with $F(u,\overline{u})=\mu u^2$}\label{sect:A motivating example}
In this section, we consider a special case of the general model \eqref{eqn:NLSE}, namely $F(u,\overline{u})=\mu u^2$ with $d=1$, as a motivating example for the construction of the exponential low-regularity parareal algorithm. This choice leads to a quadratic nonlinear Schr\"{o}dinger equation in one spatial dimension~(see, e.g., \cite{MR2204680})
\begin{equation}\label{eqn:q_NLSE_1}
    i \partial_t u = -\partial_{x}^2 u + \mu u^2,\quad (t,x)\in (0,T)\times \mathbb{T},
\end{equation}
with the initial value {$u(0,x)=u_0(x) \in H^{\frac12+} (\mathbb{T})$}. Let $f\in L^2 (\mathbb{T})$. We write its Fourier series as $f(x) = \sum_{k\in \mathbb{Z}} \widehat{f}_k \e^{ikx}$. We define a regularized inverse derivative $\partial_x^{-1}$ via its action on Fourier coefficients:
\begin{equation*}
    \left( \partial_{x}^{-1} \right)_{k} =\begin{cases}\left( ik \right)^{-1} ,& k\neq 0,\\ 0,&  k=0,\end{cases}
\end{equation*}
such that $\partial_{x}^{-1} f\left( x \right) =\sum_{k\in \mathbb{Z} \backslash \left\{ 0 \right\}} \left( ik \right)^{-1} \widehat{f}_{k} \e^{ikx}.$ We adopt the exponential low-regularity integrator (ELRI) from~\cite[Eq.~(35)]{MR3807360} as the CP in the parareal~\eqref{eqn:parareal}: 
\begin{equation}\label{eqn:LRI_q1}
    u^{n+1}=\left( 1-2i\mu \Tau \hat{u}_{0}^{n} \right) \e^{i\Tau \partial_{x}^{2}}u^{n}+i\mu \Tau \left( \hat{u}_{0}^{n} \right)^{2} +\frac{\mu}{2} \left( \e^{i\Tau \partial_{x}^{2}}\partial_{x}^{-1} u^{n} \right)^{2}-\frac{\mu}{2} \e^{i\Tau \partial_{x}^{2}}\left( \partial_{x}^{-1} u^{n} \right)^{2} .
\end{equation}
The derivation of this scheme follows. The exact solution of~\eqref{eqn:q_NLSE_1} satisfies the Duhamel formula
\begin{equation*}
    u(T_{n+1}) = \e^{i\Tau \partial_x^2} u(T_n) - i\mu \int_{0}^{\Tau} \e^{i(\Tau -s)\partial_x^2} u(T_n + s)^2\d s.
\end{equation*}
Approximating $u(T_n + s)$ with $\e^{is \partial_x^2} u(T_n)$ yields
\begin{equation}\label{eqn:fp_quadratic}
u(T_{n+1}) = \e^{i\Tau \partial_x^2} u(T_n) - i\mu \int_{0}^{\Tau} \e^{i(\Tau -s)\partial_x^2} (\e^{is \partial_x^2} u(T_n))^2\d s -i\mu {\rm R}_{1}\left( \Tau ;u\left( T_{n} \right) \right),
\end{equation}
where the remainder ${\rm R}_1$ is defined as
\begin{equation*}
{\rm{R}}_{1}(s; v_{0}) = \int_{0}^{s} \mathrm{e}^{i(s - \xi) \partial_{x}^{2}} \bigl( v(\xi)^{2} - \bigl( \mathrm{e}^{i\xi \partial_{x}^{2}} v_{0} \bigr)^{2} \bigr)\, \d \xi.
\end{equation*}
where $v(\xi)$ denotes the exact solution of~\eqref{eqn:q_NLSE_1} with the initial value $v_0$ at time $\xi$. The second term in~\eqref{eqn:fp_quadratic} can be computed exactly in Fourier space
\begin{equation*}
\begin{aligned}
    &-i\mu\int_{0}^{\Tau} \e^{i\left( \Tau -s \right) \partial_{x}^{2}}\big( \e^{is\partial_{x}^{2}}u\left( T_{n} \right) \big)^{2} \d s\\
    & =-2i\mu \Tau \hat{u}_{0}^{n} \e^{i\Tau \partial_{x}^{2}}u\left( T_{n} \right) +\frac{\mu}{2} \big( \e^{i\Tau \partial_{x}^{2}}\partial_{x}^{-1} u\left( T_{n} \right) \big)^{2} -\frac{\mu}{2} \e^{i\Tau \partial_{x}^{2}}\left( \partial_{x}^{-1} u\left( T_{n} \right) \right)^{2}+ i\mu \Tau (\widehat{u(T_n)}_0)^2.
    \end{aligned}
\end{equation*}
Inserting this identity into~\eqref{eqn:fp_quadratic} and discarding the high-order term $-i\mu {\rm R}_1$ yields the scheme~\eqref{eqn:LRI_q1}.

\subsection{Convergence analysis}
We now present the convergence analysis of the parareal method for the quadratic NLS \eqref{eqn:q_NLSE_1}, using the exponential low-regularity integrator \eqref{eqn:LRI_q1} as the CP and the exact solver as the FP.

\begin{thm}\label{thm:q1_conv}
Assume that problem~\eqref{eqn:q_NLSE_1} admits an exact solution
$u\in C([0,T];H^r(\mathbb{T}))$ with $r>1/2$. Let $U_n^k$ denote the parareal approximation obtained with the CP~\eqref{eqn:LRI_q1} and the exact FP at the $k$-th iteration and the $n$-th coarse grid point, and let $U_n=u(T_n)$. Then there exist constants $\Tau_0$, $C_1$, and $C_2$, independent of $n$ and $k$, such that, for any $0<\Tau\leq \Tau_0$,
\begin{equation}\label{eqn:q1_conv}
\max_{1\le n\le N_c}\|U_n^k-U_n\|_{H^r}
\leq C_1(C_2\Tau)^k .
\end{equation}
\end{thm}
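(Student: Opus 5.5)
Write $e_n^k=U_n^k-U_n$. Since $U_{n+1}=\F_{\Tau}(U_n)$, the parareal iteration \eqref{eqn:parareal} gives the error recursion
\begin{equation*}
e_{n+1}^{k+1}=\big(\G_{\Tau}(U_n^{k+1})-\G_{\Tau}(U_n)\big)+\big((\F_{\Tau}-\G_{\Tau})(U_n^k)-(\F_{\Tau}-\G_{\Tau})(U_n)\big),\qquad e_0^{k}=0 .
\end{equation*}
The plan is to prove two estimates in the same space $H^r$, valid on a ball $\|v\|_{H^r},\|w\|_{H^r}\le M$. Here $M$ is, say, twice $\sup_t\|u(t)\|_{H^r}$.

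\textbf{Stability of $\G_\Tau$.}
\begin{equation*}
\|\G_{\Tau}(v)-\G_{\Tau}(w)\|_{H^r}\le (1+C\Tau)\|v-w\|_{H^r}.
\end{equation*}
\textbf{Lipschitz bound on the difference.}
\begin{equation*}
\|(\F_{\Tau}-\G_{\Tau})(v)-(\F_{\Tau}-\G_{\Tau})(w)\|_{H^r}\le C\Tau^2\|v-w\|_{H^r}.
\end{equation*}
Given these, a discrete Gronwall argument finishes the proof. The recursion yields
\begin{equation*}
\|e_{n+1}^{k+1}\|\le(1+C\Tau)\|e_n^{k+1}\|+C\Tau^2\|e_n^k\|,
\end{equation*}
and summing over $n\le N_c=T/\Tau$ gives
\begin{equation*}
\max_n\|e_n^{k+1}\|\le \e^{CT}\,N_c\,C\Tau^2\max_n\|e_n^k\|=C_2\Tau\max_n\|e_n^k\|.
\end{equation*}
For the base case $k=0$, stability plus the one-step local error $\|\F_\Tau(U_n)-\G_\Tau(U_n)\|_{H^r}\le C\Tau^2$ give $\max_n\|e_n^0\|\le C_1$. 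The iterates must stay in the ball of radius $M$. This is handled by induction on $k$: if $C_1$ is not already small, choose $\Tau_0$ small so that $C_1(C_2\Tau)^k\le M/2$ for $k\ge1$, and treat $k=0$ by the bound $\|e^0_n\|\le C\Tau$, which follows from the local error $C\Tau^2$ accumulated over $N_c$ steps. Within each fixed $k$, induction on $n$ is also needed to keep $U_n^{k+1}$ bounded, and this works in the same way.

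\textbf{Proving stability.} Use formula \eqref{eqn:LRI_q1}. The linear part $\e^{i\Tau\partial_x^2}$ is an isometry on $H^r$. The remaining terms are most cleanly handled by writing $\G_\Tau(v)=\e^{i\Tau\partial_x^2}v-i\mu\int_0^\Tau \e^{i(\Tau-s)\partial_x^2}(\e^{is\partial_x^2}v)^2\,\d s$, which is exactly how the scheme was derived. The bilinear estimate \eqref{eqn:bilinear} together with the isometry then gives a nonlinear part that is $C\Tau M$-Lipschitz. This avoids dealing with $\partial_x^{-1}$ directly.

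\textbf{Main obstacle: the Lipschitz bound for $\F_\Tau-\G_\Tau$.} By \eqref{eqn:fp_quadratic}, $(\F_\Tau-\G_\Tau)(v)=-i\mu{\rm R}_1(\Tau;v)$, so I need
\begin{equation*}
\|{\rm R}_1(\Tau;v)-{\rm R}_1(\Tau;w)\|_{H^r}\le C\Tau^2\|v-w\|_{H^r}
\end{equation*}
without loss of derivatives. Write $v(\xi)^2-(\e^{i\xi\partial_x^2}v_0)^2=(v(\xi)-\e^{i\xi\partial_x^2}v_0)(v(\xi)+\e^{i\xi\partial_x^2}v_0)$, and take the analogous factorization for $w$. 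The difference of these two products is expanded as
\begin{equation*}
(a_v-a_w)b_v+a_w(b_v-b_w),
\end{equation*}
where $a_v=v(\xi)-\e^{i\xi\partial_x^2}v_0=-i\mu\int_0^\xi \e^{i(\xi-\eta)\partial_x^2}v(\eta)^2\,\d\eta$. By \eqref{eqn:bilinear}, $\|a_v-a_w\|_{H^r}\le C\xi\sup_\eta\|v(\eta)-w(\eta)\|_{H^r}$ and $\|a_w\|_{H^r}\le C\xi$. It remains to use Lipschitz dependence of the exact flow on initial data in $H^r$ over $[0,\Tau]$, namely $\sup_{\eta\le\Tau}\|v(\eta)-w(\eta)\|_{H^r}\le C\|v_0-w_0\|_{H^r}$. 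This follows from Duhamel's formula, \eqref{eqn:bilinear} and Gronwall, for data in the ball, with $\Tau_0$ tied to the local existence time. Combining these, the integrand is $O(\xi\|v_0-w_0\|)$, and integrating over $\xi\in[0,\Tau]$ yields the $\Tau^2$ factor.

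The delicate point is this last step. It is where the twisted approximation matters: a classical approximation $u(T_n+s)\approx u(T_n)$ would produce $\partial_x^2$ terms and cost two derivatives, whereas here every step stays in $H^r$ with $r>1/2$.
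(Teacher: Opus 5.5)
Your proposal is correct and follows the paper's proof essentially step for step. It uses the same error recursion and the same two estimates: the $C\Tau$-Lipschitz bound on the nonlinear part of the CP via its integral representation, and the $C\Tau^2$-Lipschitz bound on $\F_{\Tau}-\G_{\Tau}$ via the factorization $(a_v-a_w)b_v+a_w(b_v-b_w)$ plus local Lipschitz dependence of the exact flow. It then closes with discrete Gronwall in $n$ and a bootstrap keeping the iterates in a ball. The differences are cosmetic: you use a $(1+C\Tau)$-Lipschitz bound for the full $\G_{\Tau}$ instead of unrolling the unitary group $\e^{i\Tau\partial_x^2}$, and you check the $k=0$ case of the bootstrap explicitly via $\|e_n^0\|_{H^r}\le C\Tau$, which the paper leaves implicit.
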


\begin{proof}

Since $u \in C([0,T];H^{r} (\mathbb{T}))$, we set 
$M = \max_{t\in[0,T]} \| u(t)\|_{H^r}$ and choose $R = M+2\rho$ with some $\rho>0$. Therefore, $\max_{0\leq n \leq N_c} \| U_n\|_{H^r} \le M$ since $U_n = u(T_n)$.  

We first prove by a bootstrap argument that, for sufficiently small $\Tau$, 
\begin{equation}\label{eqn:bound-Ukn-2}
\max_{k\geq 0,~0\leq n\leq N_c} \|U_n^k\|_{H^r} \leq R.
\end{equation}
Assume that the above bound holds for the iterations $0,\dotsc,k$. For the $(k+1)$-th iteration, suppose that $U_j^{k+1}\in B_R := \{u\in H^{r}: \|u\|_{H^r}\leq R\}$ for $0\leq j \leq n$. Recall that the parareal algorithm reads
    \begin{equation*}
        U_{n+1}^{k+1} = \G_{\Tau} (U_n^{k+1}) + (\F_{\Tau}-\G_{\Tau}) (U_n^k),
    \end{equation*}
    and the exact solution satisfies    \begin{equation*}
        U_{n+1} = \G_{\Tau}(U_n)+ (\F_{\Tau}- \G_{\Tau}) (U_n).
    \end{equation*}
Then the parareal error $E_n^k = U_n ^k - U_n$ satisfies
\begin{equation}\label{eqn:para err1}
    E_{n+1}^{k+1} = \G_{\Tau} (U_n^{k+1}) - \G_{\Tau} (U_n) + [(\F_{\Tau} - \G_{\Tau})(U_n^k) - (\F_{\Tau} - \G_{\Tau})(U_n)].
\end{equation}
We denote the nonlinear part of the CP as $\Phi_1 (\Tau;v) = \G_{\Tau}(v)-\e^{i\Tau  \partial_x^2} v$ and further iterate~\eqref{eqn:para err1},
\begin{equation}\label{eqn:para err2}
\begin{aligned}
    E_{n+1}^{k+1} &=  \e^{i\Tau \partial_x^2} E_n^{k+1} + \Phi_1(\Tau;U_n^{k+1}) - \Phi_1(\Tau;U_n) + [(\F_{\Tau} - \G_{\Tau})(U_n^k) - (\F_{\Tau} - \G_{\Tau})(U_n)] \\
    & = (\e^{i\Tau \partial_x^2})^{n+1}  E_0^{k+1}+ \sum_{j=0}^n (\e^{i\Tau \partial_x^2})^j (\Phi_1(\Tau;U_{n-j}^{k+1}) - \Phi_1(\Tau;U_{n-j}))\\
    &\quad + \sum_{j=0}^n (\e^{i\Tau \partial_x^2})^j [(\F_{\Tau} - \G_{\Tau})(U_{n-j}^k) - (\F_{\Tau} - \G_{\Tau})(U_{n-j})],
    \end{aligned}
\end{equation}
where $E_0^{k+1} = U_0^{k+1}-U_0=0$. 

Now we claim the following two key estimates, for $w_0,v_0\in B_R$ and $r>1/2$,
\begin{align}
    \| \Phi_1 (\Tau;w_0) - \Phi_1 (\Tau;v_0)\|_{H^r} &\leq C_R \Tau \|w_0-v_0\|_{H^r} \label{eqn:stab.},\\
  \|(\F_{\Tau} - \G_{\Tau})(w_0) - (\F_{\Tau} - \G_{\Tau})(v_0)\|_{H^r}  &\leq C_R \Tau^2 \|w_0 - v_0\|_{H^r}, \label{eqn:order}
\end{align}
for some constant $C_R$ depending on the bound $R$.

Combining~\eqref{eqn:stab.} and~\eqref{eqn:order}, and taking $H^r$ norm of $E_{n+1}^{k+1}$ in~\eqref{eqn:para err2}, we obtain
\begin{align*}
    \|E_{n+1}^{k+1} \|_{H^r} \leq C_R \Tau  \sum_{j=0}^n \|E_{n-j}^{k+1}\|_{H^r} +  C_R \Tau^2 \sum_{j=0}^n \|  E_{n-j}^k \|_{H^r}.
\end{align*}
Then we apply Gronwall's inequality in subscript $n$ for parareal errors in the ${(k+1)}$-th iteration to derive
\begin{align}\label{eqn:iter}
\|E_{n+1}^{k+1} \|_{H^r} \leq C_R \e^{C_RT} \Tau^2 \sum_{j=0}^n \|E_j^k \|_{H^r} \leq C_R{T}\e^{C_R T}  \Tau \sup_{0\leq j\leq n} \|E_{j}^k\|_{H^r}\leq  C_R{T}\e^{C_R T}  \Tau (R+M),
\end{align}
where $n\Tau \leq T$. Taking $\Tau$ sufficiently small such that $C_R {T} \e^{C_R T}  \Tau (R+M) \leq \rho$, which gives $$\|U_{n+1}^{k+1}\|_{H^r} \leq M+\rho <R\quad \text{for all} ~~ 1 \le n \le N_c.$$ 
This proves \eqref{eqn:bound-Ukn-2} and therefore all parareal solutions remain in $B_R$. 

Iterating the  inequality~\eqref{eqn:iter} gives
\begin{equation*}
   \|E_{n+1}^{k+1} \|_{H^r} \leq  \sup_{0\leq j \leq N_c} \|E_j^0 \|_{H^r} (C_R {T} \e^{C_R T}  \Tau )^{k+1}.  
\end{equation*}
The desired result holds with $C_1 = \sup_{0\leq j \leq N_c} \|E_j^0 \|_{H^r}$ and $C_2 = C_R {T}\e^{C_R T}$.

Finally, we prove the claims \eqref{eqn:stab.} and \eqref{eqn:order}. Note that the constant $C_R$ may change from line to line.
By the bilinear estimate~\eqref{eqn:bilinear} with $d=1$, we observe 
\begin{align*}
&\| \Phi_{1} \left( \Tau ;w_{0} \right) -\Phi_{1} \left( \Tau ;v_{0} \right) \|_{H^{r}} \leq |\mu |\| \int_{0}^{\Tau} \e^{i\left( \Tau -s \right) \partial_{x}^{2}}\left( (\e^{is\partial_x^2}w_{0})^{2}-(\e^{is\partial_x^2}v_{0})^{2} \right) \d s\|_{H^{r}}\\ 
&\quad \leq C\Tau |\mu |\| w_{0}+v_{0}\|_{H^{r}} \| w_{0}-v_{0}\|_{H^{r}} \leq C_R \Tau \| w_{0}-v_{0}\|_{H^{r}}.
\end{align*}
This shows the estimate \eqref{eqn:stab.}.

Next, we turn to the proof of the estimate \eqref{eqn:order}. Denoting $w(s)$ and $v(s)$ as the exact solutions of~\eqref{eqn:q_NLSE_1} with the initial values $w_0$ and $v_0$ at time $s$, respectively,  we observe that
\begin{equation*}
\begin{split}
    &(\F_{\Tau} - \G_{\Tau})(w_0) - (\F_{\Tau} - \G_{\Tau})(v_0)\\
    &= -i\mu (\int_{0}^{\Tau} \e^{i\left( \Tau -s \right) \partial_{x}^{2}}\big( w\left( s \right)^{2} -(\e^{is\partial_{x}^{2}}w_{0})^2 \big) \d s-\int_{0}^{\Tau} \e^{i\left( \Tau -s \right) \partial_{x}^{2}}\big( v\left( s \right)^{2} -(\e^{is\partial_{x}^{2}}v_{0})^2 \big) \d s)\\
    &= -i\mu ({\rm R}_1 (\Tau;w_0) - {\rm R}_1 (\Tau;v_0) ).
\end{split}
\end{equation*}
Applying the $H^r$ norm on both sides of the above relation gives
\begin{align*}
    &\quad~ \| (\F_{\Tau} - \G_{\Tau})(w_0) - (\F_{\Tau} - \G_{\Tau})(v_0) \|_{H^r}\\
    &= |\mu| \|{\rm R}_1 (\Tau;w_0) - {\rm R}_1 (\Tau;v_0) \|_{H^r} 
     \leq \Tau |\mu|\sup_{0\leq s\leq \Tau} \| \big( w\left( s \right)^{2} -( \e^{is\partial_{x}^{2}}w_{0})^{2} \big) -\big( v\left( s \right)^{2} -( \e^{is\partial_{x}^{2}}v_{0} )^{2} \big) \|_{H^{r}}\\
    & =  \Tau  |\mu| \sup_{0\leq s \leq \Tau} \Big\| (w(s) + \e^{is\partial_x^2}w_0) (-i\mu \int_0^s \e^{i(s-\xi)\partial_x^2} w(\xi)^2 \d \xi)\\
    &\mspace{120mu} - (v(s) + \e^{is\partial_x^2}v_0) (-i\mu \int_0^s \e^{i(s-\xi)\partial_x^2} v(\xi)^2 \d \xi)\Big\|_{H^r} \\
    &=: \Tau |\mu| \sup_{0\leq s\leq \Tau} \| {\rm I}_{1,1} (s) \cdot {\rm I}_{1,2} (s) - {\rm I}_{1,3} (s) \cdot {\rm I}_{1,4} (s) \|_{H^r}.
\end{align*}
Through the triangle inequality and the bilinear estimate~\eqref{eqn:bilinear} again, the following inequalities hold for $0 \leq s\leq \Tau$,
\begin{equation}\label{eqn:I1}
    \begin{aligned}
&\| {\rm I}_{1,1} (s) \cdot {\rm I}_{1,2} (s) - {\rm I}_{1,3} (s) \cdot {\rm I}_{1,4} (s) \|_{H^r} \\
&\leq \| {\rm I}_{1,1}(s)\|_{H^r} \| {\rm I}_{1,2}(s) -  {\rm I}_{1,4} (s)\|_{H^r} + \| {\rm I}_{1,4}(s)\|_{H^r} \| {\rm I}_{1,1}(s) -  {\rm I}_{1,3} (s)\|_{H^r}. 
\end{aligned}
\end{equation}

We first prove the stability result. Apply the Bihari--LaSalle inequality~\cite{bihari1956generalization} to the following inequality
\begin{equation*}
\begin{aligned}
    \|w(s)\|_{H^r} \leq\| \e^{is\partial_x^2} w_0\|_{H^r} + |\mu|\| \int_0^{s}\e^{i(s-\xi)\partial_x^2}w(\xi)^2~\d \xi\|_{H^r}\leq \|w_0 \|_{H^r} + C|\mu| \int_0^s \|w(\xi)\|_{H^r}^2\, \d \xi
\end{aligned}
\end{equation*}
gives
\begin{equation}\label{eqn:w_stab}
    \|w(s) \|_{H^r} \leq 
\frac{\|w_0\|_{H^r}}
{1-C|\mu|\|w_0\|_{H^r}s}\leq C \|w_0\|_{H^r},\quad\text{when }0<s\leq \Tau,
\end{equation}
when $\Tau$ is sufficiently small. The boundedness of ${\rm I}_{1,1}$ comes from {the triangle inequality} and~\eqref{eqn:w_stab}
\begin{align*}
    \| {\rm I}_{1,1} (s)\|_{H^r} = \|w(s)+\e^{is\partial_x^2} w_0\|_{H^r} \leq \|w(s) \|_{H^r} + \|w_0\|_{H^r}  \le C\|w_0\|_{H^r} \le C_R.
\end{align*}
Similar to~\eqref{eqn:w_stab}, we also conclude that $\|v(s)\|_{H^r} \leq C\| v_0\|_{H^r} $ and the following estimate holds
$$\|{\rm I}_{1,4}(s)\|_{H^r}\leq  \Tau \|v_0\|_{H^r}^2 \le C_R \Tau .$$ 
Then we estimate for ${\rm I}_{1,2} - {\rm I}_{1,4}$ in~\eqref{eqn:I1}, 
\begin{align*}
    \|{\rm I}_{1,2} (s) - {\rm I}_{1,4}(s)\|_{H^r} &\leq C|\mu| \int_0^s \|w(\xi) +v(\xi)\|_{H^r} \|w(\xi)-v(\xi)\|_{H^r} \,\d \xi \\
    &\leq C_R |\mu| \int_0^{\Tau} \|w(\xi)-v(\xi)\|_{H^r} \,\d \xi. 
\end{align*}
Furthermore, by the Duhamel's principle and the stability estimate~\eqref{eqn:w_stab}:
\begin{equation}\label{eqn:w-v_stab}
\begin{aligned}
    \| w(\xi) - v(\xi)\|_{H^r}&\leq \|\e^{i\xi \partial_x^2} (w_0-v_0)\|_{H^r}+  |\mu| \int_0^\xi \| \e^{i(\xi-\eta)\partial_x^2} (w(\eta)^2 -v(\eta)^2)\|_{H^r}\, \d \eta\\
    &\leq \|w_0 -v_0\|_{H^r} + |\mu| \int_0^\xi \|w(\eta)+v(\eta)\|_{H^r} \|w(\eta) - v(\eta)\|_{H^r}\, \d \eta\\
    & \leq \|w_0 - v_0\|_{H^r} + |\mu|C(\|w_0\|_{H^r}+ \|v_0\|_{H^r}) \int_0^\xi \|w(\eta) - v(\eta)\|_{H^r} \, \d \eta\\
    &\leq \|w_0 - v_0\|_{H^r} + C_R \int_0^\xi \|w(\eta) - v(\eta)\|_{H^r} \, \d \eta.
\end{aligned}
\end{equation}
The Gronwall's inequality yields $\|w(s) - v(s) \|_{H^r} \leq C_R \|w_0 -v_0 \|_{H^r}$ and hence 
\begin{align*}
\max_{s\in [0,\Tau]}\|{\rm I}_{1,2} (s) - {\rm I}_{1,4}(s)\|_{H^r} &\leq C_R \Tau  \|w_0 -v_0 \|_{H^r}.
\end{align*}
Therefore 
$$ \| {\rm I}_{1,1}(s)\|_{H^r} \| {\rm I}_{1,2}(s) -  {\rm I}_{1,4} (s)\|_{H^r} \le C_R \Tau \| w_0 - v_0 \|_{H^r}. $$
Similarly, we also conclude that 
$$ \max_{s\in [0,\Tau]}\| {\rm I}_{1,1}(s) - {\rm I}_{1,3}(s) \|_{H^r} \leq C_R\|w_0 - v_0 \|_{H^r}.$$  
Hence 
$$ \| {\rm I}_{1,4}(s)\|_{H^r} \| {\rm I}_{1,1}(s) -  {\rm I}_{1,3} (s)\|_{H^r} \le C_R \Tau \| w_0 - v_0 \|_{H^r}. $$
Consequently, combining these estimates with \eqref{eqn:I1} shows that the inequality \eqref{eqn:order} holds.
Then we complete the proof of the theorem.

\end{proof}

\begin{remark}
The constant $C_2$ is independent of $n$, $k$, and $\Tau$, and depends only on the a priori bound $R$, the final time $T$, the regularity index $r$, and the parameter $\mu$. On the other hand, $C_1$ depends on the initial parareal error
$\sup_{0\leq j\leq N_c}\|E_j^0\|_{H^r}$,
and hence on the initialization of the algorithm. In particular, whether the parareal iteration is initialized by the CP or simply by the initial condition, the convergence with respect to the iteration number $k$ is always governed by the factor $(C_2\Tau)^k$. This convergence behavior is fully supported by the numerical experiments; see, for example, Figure~\ref{fig:Q_NLSE_1}.
\end{remark}

{
\begin{remark}\label{rem:general}
We note that, in the proof of Theorem \ref{thm:q1_conv}, the key steps are to establish the stability estimate \eqref{eqn:stab.} and the consistency estimate \eqref{eqn:order}. The stability estimate \eqref{eqn:stab.} can typically be obtained for standard exponential integrators. By contrast, the consistency estimate \eqref{eqn:order} is more restrictive, as it involves Sobolev norms of the same order on both sides. This necessitates the use of a specially designed low-regularity integrator, which achieves the optimal convergence rate without any loss of spatial regularity \cite[Eq.~(35)]{MR3807360}. This observation motivates the development of the exponential low-regularity parareal algorithm and the general convergence framework presented in Section~\ref{sec:general case}.
\end{remark}}

\subsection{Numerical illustration} 
In this subsection, we numerically validate the convergence result established in Theorem~\ref{thm:q1_conv}. 
We first consider problem~\eqref{eqn:q_NLSE_1} with
\[
u_0(x)=\mathrm{e}^{-x^2/2}+1\in H^{\frac12-}(\mathbb{T}), 
\qquad \mu=-1, 
\qquad T=1.
\]
The same ELRI scheme~\eqref{eqn:LRI_q1} is used as the FP with time step $\tau=10^{-4}$. We compare the performance of three CPs: the ELRI scheme~\eqref{eqn:LRI_q1}, Strang splitting~\cite[Eq.~(44)]{MR3807360}, and the third-order exponential Runge--Kutta method (ERK3)~\cite{MR1894772}; see Figure~\ref{fig:Q_NLSE_order}. 
The parareal iteration is initialized by the initial datum, namely $U_n^0=u_0$ for all $n$.
As predicted by Theorem~\ref{thm:q1_conv}, when the ELRI scheme~\eqref{eqn:LRI_q1} is used as the CP, the $L^2$ parareal error at the $k$-th iteration exhibits an order of $\mathcal{O}(\Tau^k)$. In contrast, for the other two CPs, the dependence of the parareal error on $\Tau$ appears to be largely independent of the iteration index $k$, thereby preventing rapid parareal convergence. These results demonstrate the superior convergence behavior of the exponential low-regularity parareal algorithm in case of low-regularity data.

\begin{figure}
    \centering
\includegraphics[width=0.98\textwidth,trim={0.8cm 0.8cm 0.5cm 0.75cm},clip]{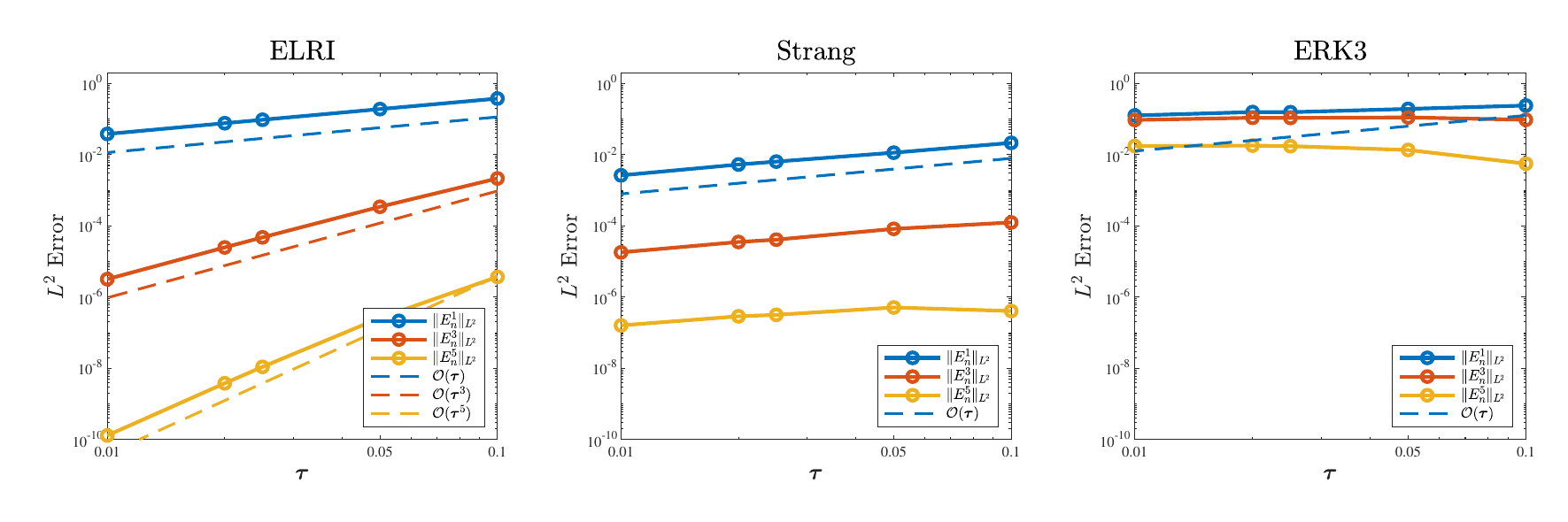}
    \caption{$L^2$ error versus coarse time step $\Tau \in \{0.1, 0.05,0.025, 0.02, 0.01\}$ for the parareal applied to quadratic nonlinear Schr\"{o}dinger equations of type \eqref{eqn:q_NLSE_1} ($H^{\frac12}$ initial data, $\mu = -1$) with three CPs: ELRI~\eqref{eqn:LRI_q1} (left), Strang splitting (middle), and ERK3 (right).}
    \label{fig:Q_NLSE_order}
\end{figure}

Next, we test the convergence in case of more regular initial data and the same FP configuration, where we let 
$$u_0(x) = |x-\pi|^{1/2} \sin (x-\pi) \in H^{2-}(\mathbb{T}),\qquad \mu=1,\qquad T=5,$$
in the quadratic NLS \eqref{eqn:q_NLSE_1}. 
{In this case, the solution has additional regularity, so classical time-stepping schemes may attain at least first-order convergence and thus become potentially comparable to the ELRI scheme~\eqref{eqn:LRI_q1}. Nevertheless, the numerical results shown in Figure~\ref{fig:Q_NLSE_1} demonstrate that the parareal algorithm with the ELRI as the CP significantly outperforms those using other CPs, including Lie splitting, Strang splitting, ERK1, and ERK3. This is consistent with the analysis in the proof of Theorem~\ref{thm:q1_conv}, where the consistency estimate \eqref{eqn:order} is established only for the ELRI~\eqref{eqn:LRI_q1}. This example highlights the necessity of using an ELRI as the CP, even for higher-regularity initial data. In particular, the numerical results show that the exponential low-regularity parareal algorithm converges linearly, with a contraction factor that decreases as $\Tau$ decreases, thereby fully supporting Theorem~\ref{thm:q1_conv}. }

\begin{figure}
    \centering
\includegraphics[width=0.98\textwidth,trim={0.8cm 0.8cm 0.5cm 0.75cm},clip]{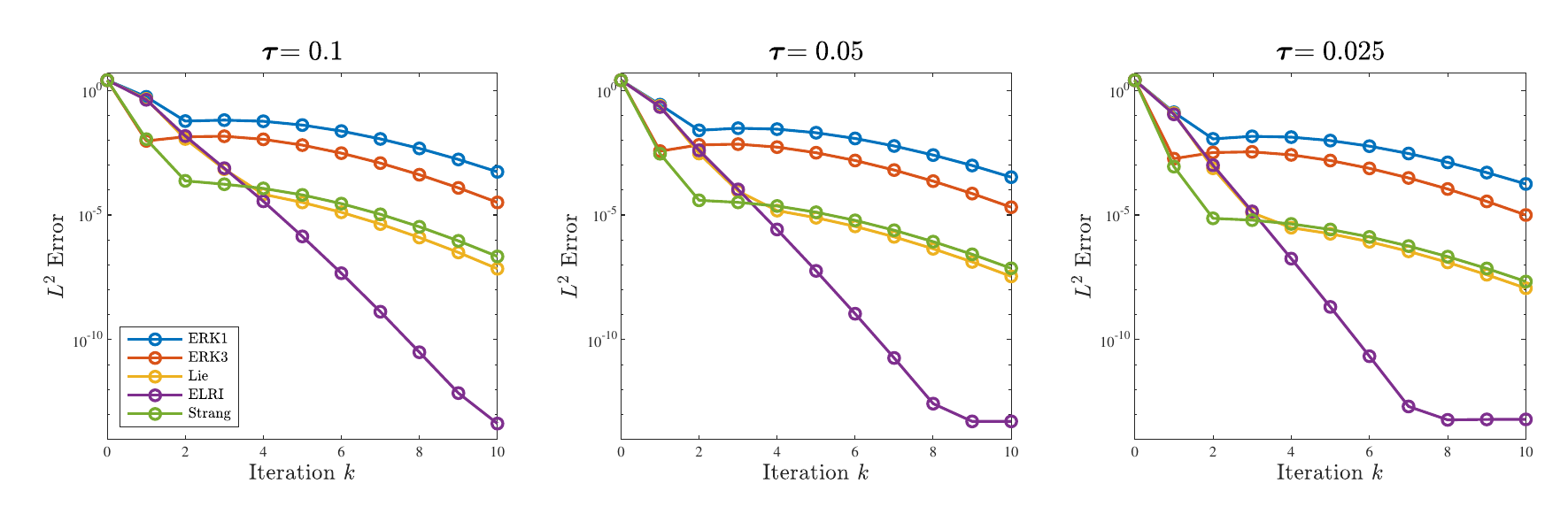}
    \caption{ $L^2$ error versus iteration $k$ for the parareal applied to quadratic nonlinear Schr\"{o}dinger equations of type~\eqref{eqn:q_NLSE_1} ($H^{2-}$ initial data, $\mu = 1$) with five CPs, and $\Tau \in \{0.1,0.05,0.025\}.$}
    \label{fig:Q_NLSE_1}
\end{figure}

\section{Exponential low-regularity parareal algorithms}\label{sec:general case}
Now, we are ready to consider the general form of the NLS equation \eqref{eqn:NLSE} and establish a general convergence framework for the parareal algorithms. In particular, we assume that the CP is of the following exponential-type:
\begin{equation}\label{eqn:cp_}
    \G_{\Tau} (u) = \e^{i \Tau \Delta} u + \Phi_c (\Tau;u),
\end{equation}
where $\Phi_c$ propagates the nonhomogeneous parts. Recall that the FP is assumed to be the exact solver and is defined in~\eqref{eqn:fp_}. Motivated by the proof in Theorem~\ref{thm:q1_conv}, we make the following assumptions on the CPs~\eqref{eqn:cp_}: 
\begin{assum}\label{assum:cp}
Let $r > d/2$.
The following inequalities hold for $w_0, v_0 \in B_R$ and $0< \Tau \leq 1$, where $B_R = \{ u \in H^r: \|u\|_{H^r}\leq R\}$:
    \begin{enumerate}
        \item[(i)] $\| \Phi_c (\Tau;w_0) - \Phi_c (\Tau;v_0) \|_{H^r} \leq C_R \Tau \|w_0-v_0\|_{H^r}$,
        \item[(ii)] $\| (\F_{\Tau} - \G_{\Tau})(w_0) - (\F_{\Tau} - \G_{\Tau})(v_0)\|_{H^r} \leq C_R \Tau^2 \|w_0 - v_0\|_{H^r}$,
    \end{enumerate}
where the constant $C_R $ is independent of $w_0,v_0,$ and $\Tau$. 
\end{assum}
The next theorem demonstrates the convergence of the parareal algorithm. The proof is similar to that of Theorem~\ref{thm:q1_conv} and we omit it here. 
\begin{thm}\label{thm:general_conv}
Let Assumption~\ref{assum:cp} hold, and assume that problem~\eqref{eqn:NLSE} admits an exact solution
$u\in C([0,T];H^r(\mathbb{T}^d))$ with $r>d/2$. Let $U_n^k$ denote the parareal approximation obtained with the CP~\eqref{eqn:cp_} and the FP~\eqref{eqn:fp_} at the $k$-th iteration and the $n$-th coarse grid point, and let $U_n=u(T_n)$. Then there exist constants $\Tau_0$, $C_1$, and $C_2$, independent of $n$ and $k$, such that, for any $0<\Tau\leq \Tau_0$, 
\begin{equation}\label{eqn:c1 c2}
   \max_{1\leq n\leq N_c} \|U_n^k-U_n\|_{H^r}
    \leq C_1(C_2\Tau)^k .
\end{equation}
\end{thm}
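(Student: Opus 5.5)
The proof follows the template of Theorem~\ref{thm:q1_conv}, with the two claimed estimates \eqref{eqn:stab.} and \eqref{eqn:order} replaced by Assumption~\ref{assum:cp}(i) and (ii).

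\textbf{Setup and error recursion.} Set $M=\max_{t\in[0,T]}\|u(t)\|_{H^r}$, fix some $\rho>0$, and let $R=M+2\rho$. Since the fine propagator is exact, $U_{n+1}=\G_{\Tau}(U_n)+(\F_{\Tau}-\G_{\Tau})(U_n)$. Subtracting this from the parareal iteration \eqref{eqn:parareal} and writing $\G_{\Tau}=\e^{i\Tau\Delta}+\Phi_c(\Tau;\cdot)$ as in \eqref{eqn:cp_}, the error $E_n^k=U_n^k-U_n$ satisfies a recursion analogous to \eqref{eqn:para err2}:
\[
E_{n+1}^{k+1}=\sum_{j=0}^n \e^{ij\Tau\Delta}\bigl[\Phi_c(\Tau;U^{k+1}_{n-j})-\Phi_c(\Tau;U_{n-j})\bigr]+\sum_{j=0}^n \e^{ij\Tau\Delta}\bigl[(\F_{\Tau}-\G_{\Tau})(U^{k}_{n-j})-(\F_{\Tau}-\G_{\Tau})(U_{n-j})\bigr].
\]
Here I use $E_0^{k+1}=0$. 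The factors $\e^{ij\Tau\Delta}$ are unitary on $H^r(\mathbb{T}^d)$, so taking $H^r$ norms discards them at no cost.

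\textbf{Bootstrap and Gronwall step.} The key point is that Assumption~\ref{assum:cp} applies only to arguments in $B_R$. I therefore run a double induction: over $k$, and within each iteration over $n$. The inductive hypothesis is that all iterates $U_j^{k'}$ with $k'\le k$, together with $U_0^{k+1},\dots,U_n^{k+1}$, lie in $B_R$. The exact values $U_j$ lie in $B_M\subset B_R$ automatically.

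The base iteration $k=0$ also needs a bound. This holds either if the algorithm is initialized by $u_0$, or, for the coarse initialization, by a stability argument. For the latter, I compare $U^0_{n+1}-U_{n+1}$ using (i) and the local error $\|(\F_{\Tau}-\G_{\Tau})(U_n)\|_{H^r}$. That local error is bounded by $C\Tau^2$ by applying (ii) with $v_0=0$, since $\F_{\Tau}(0)=\G_{\Tau}(0)=0$ for a polynomial nonlinearity with $\Phi_c(\Tau;0)=0$. A discrete Gronwall argument then gives $\|E^0_n\|_{H^r}\le C\Tau$.

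Under the inductive hypothesis, (i) and (ii) give
\[
\|E_{n+1}^{k+1}\|_{H^r}\le C_R\Tau\sum_{j\le n}\|E_j^{k+1}\|_{H^r}+C_R\Tau^2\sum_{j\le n}\|E_j^k\|_{H^r}.
\]
The discrete Gronwall inequality with $n\Tau\le T$ then yields
\[
\|E_{n+1}^{k+1}\|_{H^r}\le C_RT\e^{C_RT}\,\Tau\max_j\|E_j^k\|_{H^r}\le C_RT\e^{C_RT}\,\Tau\,(R+M).
\]
Choose $\Tau_0$ so that $C_RT\e^{C_RT}\Tau_0(R+M)\le\rho$. Then $\|U_{n+1}^{k+1}\|_{H^r}\le M+\rho<R$, which closes the induction.

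\textbf{Conclusion and main obstacle.} Iterating the contraction $\max_n\|E_n^{k+1}\|_{H^r}\le C_2\Tau\max_n\|E_n^k\|_{H^r}$ with $C_2=C_RT\e^{C_RT}$ gives the bound \eqref{eqn:c1 c2} with $C_1=\max_n\|E_n^0\|_{H^r}$.

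The main obstacle is the bootstrap bookkeeping. The Gronwall bound for iterate $k+1$ at level $n+1$ uses (i) at levels $\le n$ of the same iterate, so the invariance of $B_R$ must be established inductively in $n$ inside each $k$. The constant $C_R$ must be fixed before $\Tau_0$ is chosen, so that this is not circular.
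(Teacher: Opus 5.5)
Your proposal is correct and is essentially the paper's argument. The paper omits the proof of Theorem~\ref{thm:general_conv} and refers to the proof of Theorem~\ref{thm:q1_conv}, which is exactly your unitary unrolling of the error recursion, use of (i)--(ii) inside a bootstrap in $B_R$, discrete Gronwall in $n$, and iteration in $k$ with $C_2=C_RT\e^{C_RT}$. Your explicit treatment of the $k=0$ iterate, which the paper simply assumes lies in $B_R$, is a useful addition; note that it relies on $\F_{\Tau}(0)=0$ and $\Phi_c(\Tau;0)=0$, which hold for the schemes considered but are not part of Assumption~\ref{assum:cp}.
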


The key ingredient is the second condition in Assumption~\ref{assum:cp}, which serves as a sufficient condition to ensure the linear convergence stated in Theorem~\ref{thm:general_conv}. This assumption implies that the local truncation error of the CP~\eqref{eqn:cp_} achieves $\mathcal{O}(\Tau^2)$ \textit{without loss of spatial regularity}. In the following section, we verify Assumption \ref{assum:cp} through examples involving one-dimensional quadratic Schr\"{o}dinger equation with $F(u,\bar u) = \mu  u\bar{u} = \mu |u|^2$ and the cubic Schr\"{o}dinger equation with $F(u,\bar u) = \mu u^2 \bar u = \mu |u|^2 u$ on $\mathbb{T}$. 

\subsection{Quadratic NLS with $F(u,\bar u) = \mu  u\bar{u}$}
In this subsection, we consider the following  quadratic Schr\"{o}dinger equation~\cite{MR1357398},
\begin{equation}\label{eqn:q_NLSE_2}
    i\partial_t u = -\partial_x^2 u + \mu |u|^2,\quad (t,x) \in  (0,T) \times \mathbb{T},
\end{equation}
with the initial value $u(0,x) = u_0 (x) \in H^{\frac12+} (\mathbb{T})$. This NLS equation corresponds to $F(u,\overline{u}) = \mu u\bar{u}$ and $d=1$ in~\eqref{eqn:NLSE}. The CP is of the exponential type~\eqref{eqn:cp_} and the nonhomogeneous term is given by \cite[Eq. (37)]{MR3807360}:
\begin{equation}\label{eqn:LRI_q2}
\begin{aligned}
    \Phi_{c,1}(\Tau;u)&=\left( -i\mu \Tau \overline{\hat{u}_{0}} \right) \e^{i\Tau \partial_{x}^{2}}u-i\mu \Tau (2\pi)^{-1} \| u\|_{L^{2}}^{2}  + i\mu \Tau |\hat{u}_0|^2\\
    &\quad +\frac{\mu}{2} \partial_{x}^{-1} \left[ \left( \e^{i\Tau \partial_{x}^{2}}u \right) \left( \e^{-i\Tau \partial_{x}^{2}}\partial_{x}^{-1} \overline{u} \right) - \e^{i\Tau \partial_{x}^{2}}\left( u\partial_{x}^{-1} \overline{u} \right) \right].
    \end{aligned}
\end{equation}

Note that we add the missing overlap frequency term ($k_1 = k_2 =0$ in $I^{\tau} (w,t_n)$ in \cite[Eq. (36)-(37)]{MR3807360}) $i\mu \Tau |\hat{u}_0|^2$ and rescale the $L^2$ norm term to match the domain. Through the construction of $\Phi_{c,1}$, we have a clearer expression: 
\begin{equation}
    \Phi_{c,1} \left( \Tau ;u \right) =-i\mu \int_{0}^{\Tau} \e^{i\left( \Tau -s \right) \partial_{x}^{2}}|\e ^{is\partial_x^2}u|^{2}\, \d s.
\end{equation}

Next, we aim to verify Assumption~\ref{assum:cp} for the CP~\eqref{eqn:LRI_q2}. 
\begin{lem}\label{lem:quadratic}
With the CP~\eqref{eqn:LRI_q2} and the FP~\eqref{eqn:fp_} applied to problem~\eqref{eqn:q_NLSE_2}, Assumption~\ref{assum:cp} holds. 
\end{lem}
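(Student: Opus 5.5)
We follow the proof of Theorem~\ref{thm:q1_conv} verbatim, with $u^2$ replaced by $|u|^2 = u\bar u$. The key is the integral representation
\begin{equation*}
\Phi_{c,1}(\Tau;u) = -i\mu\int_0^{\Tau} \e^{i(\Tau-s)\partial_x^2}\,\big|\e^{is\partial_x^2}u\big|^2\,\d s,
\end{equation*}
which we first check frequency-by-frequency. Writing $u=\sum_k\hat u_k\e^{ikx}$, the product $(\e^{is\partial_x^2}u)(\overline{\e^{is\partial_x^2}u})$ carries phases $\e^{-is(k_1^2-k_2^2)}$ at output frequency $k=k_1-k_2$. We use the dominant-part splitting $k_1^2-k_2^2 = k(k_1+k_2)$. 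The integration in $s$ is then exact, and the cases $k=0$ and $k_1=k_2=0$ produce, respectively, the $L^2$-norm term (with the $(2\pi)^{-1}$ normalization) and the corrected term $i\mu\Tau|\hat u_0|^2$. We take this identity as given by the construction.

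For (i), since $\e^{it\partial_x^2}$ is unitary on $H^r$ and commutes with complex conjugation up to $t\mapsto -t$, we write
\begin{equation*}
|a|^2-|b|^2 = (a-b)\bar a + b\,\overline{(a-b)}
\end{equation*}
with $a=\e^{is\partial_x^2}w_0$ and $b=\e^{is\partial_x^2}v_0$. The bilinear estimate \eqref{eqn:bilinear} (using $\|\bar f\|_{H^r}=\|f\|_{H^r}$) then gives
\begin{equation*}
\|\Phi_{c,1}(\Tau;w_0)-\Phi_{c,1}(\Tau;v_0)\|_{H^r}\le C|\mu|\Tau(\|w_0\|_{H^r}+\|v_0\|_{H^r})\|w_0-v_0\|_{H^r}\le C_R\Tau\|w_0-v_0\|_{H^r}.
\end{equation*}

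For (ii), subtract to get
\begin{equation*}
(\F_\Tau-\G_\Tau)(w_0) = -i\mu\int_0^\Tau \e^{i(\Tau-s)\partial_x^2}\big(|w(s)|^2-|\e^{is\partial_x^2}w_0|^2\big)\,\d s =: -i\mu{\rm R}_2(\Tau;w_0),
\end{equation*}
so it suffices to bound $\sup_{s\le\Tau}\|D_w(s)-D_v(s)\|_{H^r}$ by $C_R\Tau\|w_0-v_0\|_{H^r}$, where $D_w(s)=|w(s)|^2-|\e^{is\partial_x^2}w_0|^2$. Set $\delta_w(s)=w(s)-\e^{is\partial_x^2}w_0 = -i\mu\int_0^s\e^{i(s-\xi)\partial_x^2}|w(\xi)|^2\d\xi$. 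Then
\begin{equation*}
D_w = \delta_w\,\overline{w} + \e^{is\partial_x^2}w_0\,\overline{\delta_w},
\end{equation*}
and $D_w-D_v$ is expanded by the same telescoping as in \eqref{eqn:I1}. The ingredients are:
\begin{itemize}
\item the a priori bound $\|w(s)\|_{H^r}\le C\|w_0\|_{H^r}$ for small $\Tau$, by Bihari--LaSalle as in \eqref{eqn:w_stab};
\item the Lipschitz bound $\|w(s)-v(s)\|_{H^r}\le C_R\|w_0-v_0\|_{H^r}$, by Gronwall as in \eqref{eqn:w-v_stab}, now using $|w|^2-|v|^2=(w-v)\bar w+v\overline{(w-v)}$;
\item $\|\delta_w(s)\|_{H^r}\le C_R\Tau$;
\item $\|\delta_w(s)-\delta_v(s)\|_{H^r}\le C_R\Tau\|w_0-v_0\|_{H^r}$, obtained from the previous bound combined with \eqref{eqn:bilinear}.
\end{itemize}
Every product in the expansion of $D_w-D_v$ then contains either a $\delta$-difference, which is $O(\Tau\|w_0-v_0\|)$, or a single $\delta$, which is $O(\Tau)$, multiplied by a Lipschitz difference. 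This yields the required bound.

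The only genuinely new point compared with Theorem~\ref{thm:q1_conv} is the exactness of the integral representation. This relies on the CP formula matching Duhamel's integral of the twisted approximation exactly, including the zero modes, which is precisely why the missing term $i\mu\Tau|\hat u_0|^2$ had to be added. The Sobolev estimates themselves are routine because \eqref{eqn:bilinear} is insensitive to conjugation. All bounds lose no derivatives, since only unitary groups and algebra estimates in $H^r$, $r>1/2$, are used.
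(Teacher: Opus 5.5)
Your proposal is correct and follows essentially the same argument as the paper: the bilinear estimate for (i), and for (ii) the decomposition of $|w(s)|^2-|\e^{is\partial_x^2}w_0|^2$ into products of an $O(\Tau)$ Duhamel remainder with bounded factors, together with the Bihari--LaSalle a priori bound and the Gronwall Lipschitz bound for the flow. The only difference is cosmetic. You write $D_w=\delta_w\bar w+\e^{is\partial_x^2}w_0\,\overline{\delta_w}$, whereas the paper places the conjugated remainder on the exact solution, $D_w=w\,\overline{\delta_w}+\e^{-is\partial_x^2}\bar w_0\,\delta_w$; this changes nothing in the estimates.
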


\begin{proof}
The condition (i) in Assumption \ref{assum:cp} is trivial, since 
    \begin{equation*}
    \begin{aligned}
    &\| \Phi_{c,1} \left( \Tau ;w_0 \right) -\Phi_{c,1} \left( \Tau ;v_0 \right) \|_{H^{r}} \leq |\mu |\| \int_{0}^{\Tau} \e^{i\left( \Tau - s \right) \partial_{x}^{2}}\left( |\e^{is\partial_x^2}w_{0}|^{2}-|\e^{is\partial_x^2} v_{0}|^{2} \right)\, \d s\|_{H^r}\\
    & \leq |\mu| \Tau (\|w_0\|_{H^r} \|\bar{w}_0-\bar{v}_0\|_{H^r}+\| \bar{v}_0\|_{H^r} \|w_0 -v_0\|_{H^r}) \leq C_R \Tau \| w_{0}-v_{0}\|_{H^r} .
    \end{aligned}
    \end{equation*}
Then we turn to the condition (ii) and observe the split:
\begin{align*}
&\left( \F_{\Tau}- \G_{\Tau} \right) \left( w_{0} \right) -\left( \F_{\Tau}-\G_{\Tau} \right) \left( v_{0} \right)\\
&=-i\mu \int_{0}^{\Tau} \e^{i\left( \Tau -s \right) \partial_{x}^{2}}\left[ \left( |w\left( s \right) |^{2}-|\e^{is\partial_{x}^{2}}w_{0}|^{2} \right) -\left( |v\left( s \right) |^{2}-|\e^{is\partial_{x}^{2}}v_{0}|^{2} \right) \right] \d s\\
&=-i\mu \int_0^{\Tau} \e^{i\left( \Tau -s \right) \partial_{x}^{2}} \Big[ w(s) (\bar{w}(s) - \e^{-is\partial_x^2} \bar{w}_0) - v(s) (\bar{v}(s) - \e^{-is\partial_x^2} \bar{v}_0) \\
&\quad + \e^{-is\partial_x^2} \bar{w}_0 (w(s) - \e^{is\partial_x^2}w_0) -  \e^{-is\partial_x^2} \bar{v}_0 (v(s) - \e^{is\partial_x^2}v_0)\Big] \, \d s\\
& =: -i\mu  \int_0^{\Tau} \e^{i\left( \Tau -s \right) \partial_{x}^{2}} \Big[ w(s)\cdot \overline{{\rm I}_{2,1}}(s) - v(s)\cdot \overline{{\rm I}_{2,2}}(s) \\
&\quad + \e^{-is\partial_x^2}\bar{w}_0 \cdot {{\rm I}}_{2,1}(s) - \e^{-is\partial_x^2}\bar{v}_0\cdot{{\rm I}}_{2,2}(s)\Big] \, \d s.
\end{align*}
With $r>1/2$, applying the $H^r$ norm on both sides of the above relation and denoting $w(s)$ and $v(s)$ as the exact solutions of~\eqref{eqn:q_NLSE_2} with the initial values $w_0$ and $v_0$ at time $s$
yields 
\begin{equation}\label{eqn:F-G}
\begin{aligned}
   &\| \left( \F_{\Tau}- \G_{\Tau} \right) \left( w_{0} \right) -\left( \F_{\Tau}-\G_{\Tau} \right) \left( v_{0} \right) \|_{H^r}\\
   & \leq |\mu| \Tau \sup_{0\leq s \leq \Tau} \Big( \|w(s)\|_{H^r} \| \overline{{\rm I}_{2,1}}(s) - \overline{{\rm I}_{2,2}}(s)\|_{H^r} + \|\overline{{\rm I}_{2,2}}(s)\|_{H^r} \| w(s) - v(s)\|_{H^r} \\
   &\quad + \|\overline{w_0}\|_{H^r} \| {{\rm I}}_{2,1}(s) - {{\rm I}}_{2,2}(s)\|_{H^r} + \|{{\rm I}}_{2,2}(s)\|_{H^r} \| \e^{-is\partial_x^2} (\overline{w_0} - \overline{v_0})\|_{H^r}\Big).
\end{aligned}
\end{equation}
The boundedness of $w(s)$ and $v(s)$ follows from the same proof in~\eqref{eqn:w_stab}. The stability estimate $\|w(s) -v(s)\|_{H^r} \leq C_R \|w_0 -v_0\|_{H^r}$ follows from the same proof in~\eqref{eqn:w_stab}. By~\eqref{eqn:bilinear}, $\overline{\rm I_{2,2}}(s)$ can be bounded:
\begin{equation*}
    \| \overline{\rm I_{2,2}}(s)\|_{H^r}\leq |\mu| \|\int_0^s \e^{i(s-\xi)\partial_x^2} |v(\xi)|^2\,\d \xi\|_{H^r}\leq |\mu| \int_0^s\|v(\xi)\|_{H^r}^2\,\d \xi \leq |\mu|\Tau \sup_{0\leq s\leq \Tau} \| v(s)\|_{H^r}^2.
\end{equation*}
By collecting the estimate above, we obtain
\begin{equation}\label{eqn:F-G_1}
\begin{aligned}
   &\| \left( \F_{\Tau}- \G_{\Tau} \right) \left( w_{0} \right) -\left( \F_{\Tau}-\G_{\Tau} \right) \left( v_{0} \right) \|_{H^r}\\
    &\leq |\mu| \Tau \Big( \sup_{0\leq s \leq \Tau} \big( C_R \| \overline{{\rm I}_{2,1}}(s) - \overline{{\rm I}_{2,2}}(s)\|_{H^r} \big) +  \sup_{0\leq s \leq \Tau} C \Tau \|v(s)\|^2_{H^r} \|w_0 - v_0 \|_{H^r}\\
   &\quad +  \sup_{0\leq s \leq \Tau} C_R\| {{\rm I}}_{2,1}(s) - {{\rm I}}_{2,2}(s)\|_{H^r} +   \sup_{0\leq s \leq \Tau} C\Tau \|v(s)\|_{H^r}^2 \| \e^{-is\partial_x^2} ({w}_0 - {v}_0)\|_{H^r}\Big)\\
   &\leq C_R\Tau \Big( \sup_{0\leq s\leq \Tau} (\| {{\rm I}}_{2,1}(s) - {{\rm I}}_{2,2}(s)\|_{H^r} + \Tau  \| w_0 - v_0 \|_{H^r}) \Big),
\end{aligned}
\end{equation}
It remains to estimate the term $\sup_{0\leq s\leq \Tau} \| {{\rm I}}_{2,1}(s) - {{\rm I}}_{2,2}(s)\|_{H^r}$,
\begin{align*}
    &  \| {{\rm I}}_{2,1}(s) - {{\rm I}}_{2,2}(s)\|_{H^r}\leq |\mu| \| \int_0^{s} \e^{-i(s-\xi)\partial_x^2} \big( |w(\xi)|^2 - |v(\xi)|^2 \big)\, \d \xi\|_{H^r} \\
    &\quad \leq C_R \Tau \sup_{0\leq s\leq \Tau}\|w(s) - v(s) \|_{H^r} \leq C_R \Tau \|w_0 - v_0 \|_{H^r}.
\end{align*}
This combines with~\eqref{eqn:F-G_1} to show the second condition.  
\end{proof}

As a direct result of Lemma \ref{lem:quadratic} and Theorem \ref{assum:cp}, we have the following corollary for the convergence of the proposed exponential low-regularity parareal algorithm.

\begin{coro}\label{cor:41_conv}
Assume that the problem~\eqref{eqn:q_NLSE_2} admits an exact solution
$u\in C([0,T];H^r(\mathbb{T}))$ with $r>1/2$. Let $U_n^k$ denote the parareal approximation obtained by using the CP~\eqref{eqn:cp_}, augmented by the nonhomogeneous term~\eqref{eqn:LRI_q2}, together with the exact FP~\eqref{eqn:fp_}, at the $k$-th iteration and the $n$-th coarse grid point, and let $U_n=u(T_n)$ be the exact solution to the problem~\eqref{eqn:q_NLSE_2}. Then there exist constants $\Tau_0$, $C_1$, and $C_2$, independent of $n$ and $k$, such that, for any $0<\Tau\leq \Tau_0$,
\begin{equation}\label{eqn:q2_conv}
\max_{1\le n\le N_c} \|U_n^k-U_n\|_{H^r}
\leq C_1(C_2\Tau)^k .
\end{equation}
\end{coro}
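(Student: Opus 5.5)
The corollary follows by combining Lemma~\ref{lem:quadratic} with Theorem~\ref{thm:general_conv}. The only work is to check that the hypotheses of Theorem~\ref{thm:general_conv} hold in the present setting, and to make sure the proof of that theorem (omitted in the paper) goes through as in Theorem~\ref{thm:q1_conv}.

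First, the CP defined by \eqref{eqn:cp_} with $\Phi_c=\Phi_{c,1}$ from \eqref{eqn:LRI_q2} is of the required exponential type. By Lemma~\ref{lem:quadratic}, conditions (i) and (ii) of Assumption~\ref{assum:cp} hold with $d=1$, for every $r>1/2$ and every radius $R$, with a constant $C_R$ independent of $\Tau\in(0,1]$. The regularity hypothesis $u\in C([0,T];H^r(\mathbb{T}))$ with $r>1/2=d/2$ matches that of Theorem~\ref{thm:general_conv}. Applying that theorem then gives \eqref{eqn:q2_conv}.

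Since the proof of Theorem~\ref{thm:general_conv} is only asserted to be analogous, I would rerun the argument of Theorem~\ref{thm:q1_conv} with $\partial_x^2$ and $\Phi_1$ replaced by $\Phi_{c,1}$.

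\begin{itemize}
\item Set $M=\max_t\|u(t)\|_{H^r}$ and $R=M+2\rho$.
\item Write the error recursion \eqref{eqn:para err2} with $\Phi_{c,1}$ in place of $\Phi_1$. Because $\e^{i\Tau\partial_x^2}$ is unitary on $H^r$, the powers of the linear propagator cost nothing in norm.
\item Use (i) and (ii) to obtain
\[
\|E^{k+1}_{n+1}\|_{H^r}\le C_R\Tau\sum_{j\le n}\|E^{k+1}_j\|_{H^r}+C_R\Tau^2\sum_{j\le n}\|E^k_j\|_{H^r}.
\]
\item Apply the discrete Gronwall inequality in $n$, using $n\Tau\le T$, to get
\[
\max_n\|E^{k+1}_n\|_{H^r}\le C_R T\e^{C_RT}\Tau\max_n\|E^k_n\|_{H^r}.
\]
\end{itemize}

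The main point needing care is the bootstrap showing that all iterates $U^k_n$ stay in $B_R$, since (i) and (ii) are only available on $B_R$. I would argue by induction on $k$, and within each $k$ by induction on $n$. The step $U^{k+1}_{n+1}$ depends only on $U^{k+1}_n$ and $U^k_n$, which lie in $B_R$ by the induction hypotheses. The Gronwall bound then gives $\|E^{k+1}_{n+1}\|_{H^r}\le C_RT\e^{C_RT}\Tau(R+M)\le\rho$ once $\Tau\le\Tau_0$. Hence $\|U^{k+1}_{n+1}\|_{H^r}\le M+\rho<R$, which closes the induction.

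The base case requires the initial iterates $U^0_n$ to lie in $B_R$. If $U^0_n=u_0$, this holds as long as $\|u_0\|_{H^r}\le M$, which is true. If $U^0_n$ is produced by the CP, it holds by the same Gronwall argument applied to (i) together with the consistency of $\mathcal{G}_{\Tau}$.

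Iterating the contraction estimate yields the bound with $C_1=\sup_j\|E^0_j\|_{H^r}$ and $C_2=C_RT\e^{C_RT}$, both independent of $n$ and $k$.
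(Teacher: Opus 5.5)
Your proposal is correct and follows the paper's route. The corollary comes from feeding Lemma~\ref{lem:quadratic} into Theorem~\ref{thm:general_conv}, whose omitted proof is the bootstrap-plus-discrete-Gronwall argument of Theorem~\ref{thm:q1_conv} that you rerun. Your explicit base case $k=0$ is slightly more careful than the paper, and the CP consistency you invoke there does follow from condition (ii) with $v_0=0$, since $0$ is fixed by both $\mathcal{F}_{\Tau}$ and $\mathcal{G}_{\Tau}$ for this nonlinearity.
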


\subsection{Cubic NLS with $F(u,\bar u) = \mu |u|^2 u$}
In this subsection, we consider the following cubic NLS on the torus:
\begin{equation}\label{eqn:cubic_1d}
    i\partial_t u = -\partial_x^2 u + \mu |u|^2 u,\quad (t,x) \in (0,T) \times \mathbb{T},
\end{equation}
with initial data $u(0,x)=u_0(x) \in H^{\frac32+} (\mathbb{T})$. This NLS corresponds to $F(u,\overline{u}) = \mu u^2 \bar u=\mu |u|^2 u$ and $d=1$ in~\eqref{eqn:NLSE}.

In the parareal algorithm \eqref{eqn:parareal}, we employ the exponential low-regularity integrator constructed in \cite[(Eq.~(1.11))]{MR4405493} as the CP and use the exact solver as the FP. This CP is of the exponential form~\eqref{eqn:cp_}, and  the nonhomogeneous term is given by
\begin{equation}\label{eqn:cp_2}
    \begin{aligned}
\Phi_{c,2} ( \Tau;u ) &=\e^{i\Tau \partial_{x}^{2}}( \e^{-2i\mu\Tau (M(u)+P(u)\partial_{x}^{-1}) }-1)u-i\mu \Tau \Pi_{0} \left( |u|^{2}u \right) +2i\mu \Tau M (u)\Pi_{0} \left( u \right)\\
&\quad - \frac{\mu}{2} \partial_{x}^{-2} \left[ \left( \e^{-i\Tau \partial_{x}^{2}}\bar{u} \right) \cdot \e^{i\Tau \partial_{x}^{2}} (u^{2}) \right] +\frac{\mu}{2}  \e^{i\Tau \partial_{x}^{2}}\partial_{x}^{-2} \left[ |u|^{2}u \right]\\
&\quad +\mu \partial_{x}^{-1} \left[ \left( \e^{i\Tau \partial_{x}^{2}}u \right) \cdot \partial_{x}^{-1} \left( |\e^{i\Tau \partial_{x}^{2}}u|^{2} \right) \right] - \mu \e^{i\Tau \partial_{x}^{2}}\partial_{x}^{-1} \left[ u\cdot \partial_{x}^{-1} \left( |u|^{2} \right) \right].
\end{aligned}
\end{equation}
Here $M(u)$ and $P(u)$ denote the mass and momentum, respectively, which are conserved quantities of the continuum NLS model. They are defined by
\begin{equation*}
    M (u) = \Pi_0 (|u|^2),~P(u) = \Pi_0 (u \partial_x \overline{u}),\quad \text{where}~\Pi_0(f) = \frac{1}{2\pi} \int_{\mathbb{T}}f(x)\, \d x.
\end{equation*}
Here we assume that $M(u_0)>0$. For the scheme~\eqref{eqn:cp_2}, if $M(u)$ and $P(u)$ are replaced by their initial values $M(u_0)$ and $P(u_0)$, respectively, then it was proved in~\cite{MR4405493} that the corresponding scheme converges to the exact solution with first-order accuracy in
$H^{\frac{3}{2}+}(\mathbb{T})$, without any loss of spatial regularity. In the present work, we instead allow the quantities (M) and (P) to depend on the current input, which is the straightforward extension to the parareal setting. See Remark~\ref{rmk:CP_c2} for further details.

Note that to further preserve the mass while requiring as little regularity as possible, we can modify the scheme \eqref{eqn:cp_2} by replacing $\Phi_{c,2} (\Tau;u)$ with
\begin{equation}\label{eqn:cp_3}
    \Phi_{c,3} (\Tau;u) = \Phi_{c,2} (\Tau;u) + G_1 (\Tau;u) + G_2 (\Tau;u),
\end{equation}
where the corrections $G_1$ and $G_2$ are defined by
\begin{align*}
    G_1 (\Tau;u) &= H(\Tau;u) \e^{i\Tau \partial_x^2} u,\\
    G_2 (\Tau;u) &= -\frac{1}{2} (H(\Tau;u))^2 \e^{i\Tau \partial_x^2} u-M(u_0)^{-1} H(\Tau;u)\operatorname{Re}\Pi_{0} \big(\Phi_{c,2}(\Tau;u) \e^{-i\Tau \partial_x^2} \bar{u} \big) \e^{i\Tau \partial_x^2} u, \\
    H(\Tau;u) &= -M(u_0)^{-1} \big[\operatorname{Re}\Pi_{0} \big(\Phi_{c,2}(\Tau;u)\e^{-i\Tau \partial_x^2 }\bar{u} \big) + \frac{1}{2} \Pi_0 \big( |\Phi_{c,2} (\Tau;u)|^2\big)\big].
\end{align*}
It was shown in~\cite{MR4405493} that, after this modification, the resulting ELRI preserves the mass up to fifth order when the invariants in~\eqref{eqn:cp_2} are frozen at their initial values. The analysis in this section is carried out for the CP based on \eqref{eqn:cp_2}, but it can be extended straightforwardly to the CP based on \eqref{eqn:cp_3}.


Next, we verify Assumption~\ref{assum:cp} for the CP~\eqref{eqn:cp_2}. This assumption on the modified one~\eqref{eqn:cp_3} can be verified similarly. Motivated by the argument in \cite{MR4405493}, we introduce the following notation. Let $M=M(k,k_1,k_2,\dotsc,k_m)$ be a nonnegative frequency multiplier. We denote by $\mathcal{D}_m (M;w,v)$ the class of functions $D_m(M;w,v)$ whose $k$-th Fourier coefficient satisfies 
\begin{equation}\label{eqn:D_m}
\begin{aligned}
    \mathscr{F}(D_m (M;w,v))(k)
    = \sum_{k=k_1 + \dotsc + k_m} a(k,k_1,\dotsc,k_m) \big(\hat{w}_{k_1}(t)\dotsc \hat{w}_{k_m}(t)-\hat{v}_{k_1}(t)\dotsc \hat{v}_{k_m}(t)\big),
\end{aligned}
\end{equation}
where
\begin{equation*}
    \sup_{t\in (0,T]} |a(k,k_1,\dotsc,k_m)(t)| \leq C |M(k,k_1,\dotsc,k_m)|.
\end{equation*}

In the estimates below, factors involving $u$ and $\bar u$ are treated in the
same way, since taking complex conjugates does not change the Sobolev norms or
the Fourier majorant estimates. We first present the following lemma.  

\begin{lem}\label{lem:Dm}
For any function $D_m (M;w,v) \in \mathcal{D}_m(M;w,v)$, as defined in \eqref{eqn:D_m}, the following estimates hold for $B_{R,r} = \{ w\in H^r:\|w\|_{H^r} \leq R \}$:
\begin{enumerate}
    \item[(i)] Let $\gamma \geq 1,$ and $w,v\in B_{R,\gamma}$, then 
    \begin{equation}\label{eqn:lem1}
        |\mathscr{F}D_3 (k_2 k_3;w,v)(0)| \leq C_{R,\gamma}\|w-v\|_{H^\gamma}.
    \end{equation}
    \item[(ii)] Let $\gamma > 1/2,~m\geq1$ and $w,v\in B_{R,\gamma}$, then 
    \begin{equation}\label{eqn:lem2}
        \|D_m (1;w,v)\|_{H^\gamma} \leq C_{R,\gamma,m}\|w-v\|_{H^\gamma}.
    \end{equation}
    \item[(iii)] Let $\gamma > 3/2,$ and $w,v\in B_{R,\gamma}$, then 
    \begin{equation}\label{eqn:lem3}
        \|D_3 (k^{-1}k_1 k_2 k_3;w,v)\|_{H^\gamma} \leq C_{R,\gamma}\|w-v\|_{H^\gamma}.
    \end{equation}
\end{enumerate}
\end{lem}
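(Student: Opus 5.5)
The common device for all three parts is a telescoping decomposition of the multilinear difference,
\[
\hat w_{k_1}\cdots\hat w_{k_m}-\hat v_{k_1}\cdots\hat v_{k_m}=\sum_{l=1}^m \hat v_{k_1}\cdots\hat v_{k_{l-1}}\,(\hat w_{k_l}-\hat v_{k_l})\,\hat w_{k_{l+1}}\cdots\hat w_{k_m}.
\]
Each summand is a multilinear form in which exactly one slot carries $w-v$ and the remaining slots carry $w$ or $v$, all of which lie in $B_{R,\gamma}$. It therefore suffices to bound a trilinear (or $m$-linear) form $\sum |a|\,|\hat f^{(1)}_{k_1}|\cdots|\hat f^{(m)}_{k_m}|$ by $C\prod_l\|f^{(l)}\|_{H^\gamma}$. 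We pass to Fourier majorants: let $\tilde f=\sum_k|\hat f_k|\e^{ikx}$, which has the same $H^\gamma$ norm as $f$. Since $|a|\le C|M|$, we may replace $a$ by $|M|$ and the coefficients by their absolute values.

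\textbf{Part (ii).} Here $M\equiv1$, so each summand is dominated coefficientwise by the Fourier coefficients of the product $\tilde f^{(1)}\cdots\tilde f^{(m)}$. Iterating the bilinear estimate \eqref{eqn:bilinear} with $\gamma>1/2$ gives
\[
\|D_m\|_{H^\gamma}\le C\sum_l R^{m-1}\|w-v\|_{H^\gamma}.
\]

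\textbf{Part (i).} For $k=0$ we have $k_1=-k_2-k_3$. The summand is bounded by
\[
\sum_{k_2,k_3}|\hat f_{-k_2-k_3}|\,|k_2||\hat g_{k_2}|\,|k_3||\hat h_{k_3}|=\Pi_0\bigl(\tilde f\cdot \widetilde{\partial_x g}\cdot\widetilde{\partial_x h}\bigr)\cdot 2\pi,
\]
up to a reflection. We estimate this via Hölder: $\|\tilde f\|_{L^\infty}\le C\|f\|_{H^\gamma}$ since $\gamma\ge1>1/2$, and $\|\widetilde{\partial_x g}\|_{L^2}\le\|g\|_{H^1}\le\|g\|_{H^\gamma}$, and likewise for $h$. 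The same bound holds whichever slot carries $w-v$, so (i) follows.

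\textbf{Part (iii).} The multiplier is $|k|^{-1}|k_1k_2k_3|$ with $k=k_1+k_2+k_3$; the case $k=0$ is excluded or treated as zero by the convention for $\partial_x^{-1}$. The key step is to absorb one power of frequency. Since $|k|\ge1$ and $\max_j|k_j|\ge|k|/3$, the $k^{-1}$ cancels the largest frequency up to a constant only if the other two factors can be controlled, so we split into cases according to which $|k_j|$ is largest. If $|k_1|$ is the largest, we bound
\[
|k|^{-1}|k_1k_2k_3|\le |k|^{-1}|k_1|\,\langle k_2\rangle\langle k_3\rangle .
\]
This does not directly reduce, because $|k_1|/|k|$ can be large when frequencies cancel. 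The alternative is to use weights: write
\[
\langle k\rangle^\gamma|k|^{-1}|k_1k_2k_3|\lesssim \langle k\rangle^{\gamma-1}|k_1||k_2||k_3|,
\]
then distribute $\langle k\rangle^{\gamma-1}\lesssim\sum_j\langle k_j\rangle^{\gamma-1}$ (valid since $\gamma-1>0$). In the subcase where the weight lands on $k_1$, we get $\langle k_1\rangle^{\gamma}\cdot\langle k_2\rangle\langle k_3\rangle$. We then place $f$ in $L^2$ with $\gamma$ derivatives and $\widetilde{\partial_x g},\widetilde{\partial_x h}$ in $L^\infty$, which requires $H^{1+1/2+}$ and is exactly where $\gamma>3/2$ enters. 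By symmetry, this Hölder/Sobolev bound handles each subcase and yields $\|D_3\|_{H^\gamma}\le C_{R,\gamma}\|w-v\|_{H^\gamma}$.

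\textbf{Main obstacle.} The main difficulty is part (iii): we must make sure the $|k|^{-1}$ together with the distribution of $\langle k\rangle^{\gamma-1}$ leaves at most $\gamma$ derivatives on one factor and at most one derivative on each of the other two. If the crude distribution fails in some frequency-resonant configuration, the fallback is the finer case analysis of \cite{MR4405493}, splitting on $|k|\gtrsim|k_1|$ versus $|k|\ll|k_1|$ and using in the latter case that $|k_2|+|k_3|\gtrsim|k_1|$.
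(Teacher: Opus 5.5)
Your proposal is correct and follows the paper's proof. It uses the same telescoping decomposition and Fourier-majorant reduction, an $L^\infty\times L^2\times L^2$ H\"older bound in (i), the algebra property of $H^\gamma$ in (ii), and in (iii) the reduction $\langle k\rangle^\gamma|k|^{-1}\lesssim\langle k\rangle^{\gamma-1}$; your subsequent distribution of $\langle k\rangle^{\gamma-1}$ over the three frequencies, followed by $L^2\times L^\infty\times L^\infty$, is just the proof of the algebra property of $H^{\gamma-1}$ ($\gamma-1>1/2$) that the paper invokes directly for $(|\nabla|d)(|\nabla|w)^2$, so it already closes the argument and the fallback case analysis you mention is not needed.
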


\begin{proof}
Let $d=w-v$. Throughout the proof, by the Fourier majorant argument and the proof in~\cite[Lemma 2.1]{MR4405493}, it suffices to estimate the corresponding expression with $\widehat f(k)$ replaced by $|\widehat f(k)|$ for each function $f$.

\noindent (i) From the definition of $D_3(M;w,v)$, we obtain
\begin{align*}
 |\mathscr{F} D_3 (k_2k_3;w,v)(0)| \leq C \sum_{k_1+k_2+k_3=0} |k_2||k_3|
 |\hat{w}_{k_1} \hat{w}_{k_2} \hat{w}_{k_3}
 - \hat{v}_{k_1} \hat{v}_{k_2} \hat{v}_{k_3} | .
\end{align*}
Using the decomposition $\hat{w}_{k_1}\hat{w}_{k_2}\hat{w}_{k_3}
-\hat{v}_{k_1}\hat{v}_{k_2}\hat{v}_{k_3}
=
\hat d_{k_1}\hat w_{k_2}\hat w_{k_3}
+\hat v_{k_1}\hat d_{k_2}\hat w_{k_3}
+\hat v_{k_1}\hat v_{k_2}\hat d_{k_3},$
we obtain
\begin{align*}
& |\mathscr{F} D_3 (k_2k_3;w,v)(0)| \\
&\leq
C \sum_{k_1+k_2+k_3=0} |k_2||k_3|
\Big(
|\hat d_{k_1}|\,|\hat w_{k_2}|\,|\hat w_{k_3}|
+|\hat v_{k_1}|\,|\hat d_{k_2}|\,|\hat w_{k_3}|+ |\hat v_{k_1}|\,|\hat v_{k_2}|\,|\hat d_{k_3}|
\Big).
\end{align*}
By Plancherel's theorem, H\"older's inequality, and the embedding
$H^\gamma(\mathbb T)\hookrightarrow L^\infty(\mathbb T)$ for
$\gamma >1/2$, the right-hand side is bounded by
\begin{align*}
& C\|d\|_{L^\infty}\||\nabla|w\|_{L^2}^2
+ C\|v\|_{L^\infty}
\||\nabla|d\|_{L^2}\||\nabla|w\|_{L^2} \\
&\quad
+ C\|v\|_{L^\infty}
\||\nabla|v\|_{L^2}\||\nabla|d\|_{L^2}.
\end{align*}
Therefore, since \(w,v\in B_{R,\gamma}\) and {$\gamma\geq 1$},
\begin{align*}
|\mathscr{F} D_3 (k_2k_3;w,v)(0)|
&\leq C_{\gamma}
\|d\|_{H^\gamma}\|w\|_{H^\gamma}^2
+ C_{\gamma}\|v\|_{H^\gamma}\|d\|_{H^\gamma}{\|w\|_{H^\gamma} }+C_{\gamma}\|v\|_{H^\gamma}^2\|d\|_{H^\gamma} \\
&\leq C_{R,\gamma}\|d\|_{H^\gamma}.
\end{align*}

\noindent (ii) We use the telescopic decomposition
\begin{equation*}
     \prod_{j=1}^m \hat w_{k_j}
    -
    \prod_{j=1}^m \hat v_{k_j}
    =
    \sum_{\ell=1}^m
    \Big(\prod_{j<\ell}\hat v_{k_j}\Big)
    \hat d_{k_\ell}
    \Big(\prod_{j>\ell}\hat w_{k_j}\Big).
\end{equation*}
Hence, by the definition of
\(D_m(1;w,v)\),
\begin{align*}
|\mathscr{F}D_m(1;w,v)(k)|
&\leq
C\sum_{\ell=1}^m
\sum_{k=k_1+\dotsc+k_m}
\Big(\prod_{j<\ell}|\hat v_{k_j}|\Big)
|\hat d_{k_\ell}|
\Big(\prod_{j>\ell}|\hat w_{k_j}|\Big).
\end{align*}
By Plancherel's identity and the standard Fourier majorant argument, we obtain 
\begin{equation*}
    \|D_m(1;w,v)\|_{H^\gamma}
\leq
C\sum_{\ell=1}^m
\|v^{\ell-1}d w^{m-\ell}\|_{H^\gamma}.
\end{equation*}
Since \(\gamma>1/2\), \(H^\gamma(\mathbb T)\) is an algebra. Thus
\begin{align*}
\|D_m(1;w,v)\|_{H^\gamma}
&\leq
C_{\gamma}\sum_{\ell=1}^m
\|v\|_{H^\gamma}^{\ell-1}
\|d\|_{H^\gamma}
\|w\|_{H^\gamma}^{m-\ell} \leq {C_{R,\gamma,m}}\|d\|_{H^\gamma}.
\end{align*}

\noindent (iii) By the definition of \(D_3\), for \(k\neq 0\),
\begin{align*}
&|\mathscr{F}D_3(k^{-1}k_1k_2k_3;w,v)(k)| \\
&\leq
C\sum_{k=k_1+k_2+k_3}
|k|^{-1}|k_1||k_2||k_3|
|\hat{w}_{k_1}\hat{w}_{k_2}\hat{w}_{k_3}
-\hat{v}_{k_1}\hat{v}_{k_2}\hat{v}_{k_3}|.
\end{align*}
Using the same telescopic decomposition as in part \((i)\), we have
\begin{align*}
&|\mathscr{F}D_3(k^{-1}k_1k_2k_3;w,v)(k)| \\
&\leq
C\sum_{k=k_1+k_2+k_3}
|k|^{-1}|k_1||k_2||k_3|
\Big(
|\hat d_{k_1}|\,|\hat w_{k_2}|\,|\hat w_{k_3}|
+|\hat v_{k_1}|\,|\hat d_{k_2}|\,|\hat w_{k_3}| \\
&\hspace{8em}
+|\hat v_{k_1}|\,|\hat v_{k_2}|\,|\hat d_{k_3}|
\Big).
\end{align*}
Since $k\neq 0$, $\langle k\rangle^\gamma |k|^{-1}
    \leq C \langle k\rangle^{\gamma-1}.$ Thus, by Plancherel's identity and the Fourier majorant argument: 
\begin{align*}
&\|D_3(k^{-1}k_1k_2k_3;w,v)\|_{H^\gamma} \\
&\leq C
\big\|
(|\nabla|d)(|\nabla|w)^2
\big\|_{H^{\gamma-1}}
+
C
\big\|
(|\nabla|v)(|\nabla|d)(|\nabla|w)
\big\|_{H^{\gamma-1}} +
C
\big\|
(|\nabla|v)^2(|\nabla|d)
\big\|_{H^{\gamma-1}} .
\end{align*}
Since $\gamma>3/2$,
$H^{\gamma-1}(\mathbb T)$ is an algebra. Therefore
\begin{align*}
&\|D_3(k^{-1}k_1k_2k_3;w,v)\|_{H^\gamma} \\
&\leq C_{\gamma}
\||\nabla|d\|_{H^{\gamma-1}}
\||\nabla|w\|_{H^{\gamma-1}}^2+ C_{\gamma}
\||\nabla|v\|_{H^{\gamma-1}}
\||\nabla|d\|_{H^{\gamma-1}}
\||\nabla|w\|_{H^{\gamma-1}} \\
&\quad
+ C_{\gamma}
\||\nabla|v\|_{H^{\gamma-1}}^2
\||\nabla|d\|_{H^{\gamma-1}} \leq {C_{R,\gamma}}\|d\|_{H^\gamma}.
\end{align*}
Hence, the proof is complete.
\end{proof}
We need the following lemma to handle the dependence of $M$ and $P$ on their input. 
\begin{lem}\label{lem:MP_stability}
Let $\gamma>3/2$ and $w,v\in B_{R,\gamma}$. Then
\begin{equation}\label{eqn:MP_stab}
    |M(w)-M(v)|+|P(w)-P(v)|
    \le C_R\|w-v\|_{H^\gamma}.
\end{equation}
Moreover, define $ Q_w(k)=M(w)+P(w)(ik)^{-1}$ for $k\in\mathbb Z\backslash\{ 0\}$ and $Q_w (0)=M(w)$, then
\begin{equation*}
    \sup_{k\in\mathbb Z}|Q_w(k)|\le C_R,
    \qquad
    \sup_{k\in\mathbb Z}|Q_w(k)-Q_v(k)|
    \le C_R\|w-v\|_{H^\gamma}.
\end{equation*}
Let $g(z)=1-z-\e^{-z}$. For $0<\Tau\le1$, the following estimate holds
\[
  \sup_{k\in\mathbb Z}
    \left|g\big(2i\mu\Tau Q_w(k)\big)\right|
    \le C_R\Tau^2,~\sup_{k\in\mathbb Z}
    \left|
    g\big(2i\mu\Tau Q_w(k)\big)
    -
    g\big(2i\mu\Tau Q_v(k)\big)
    \right|
    \le C_R\Tau^2\|w-v\|_{H^\gamma}.
\]
\end{lem}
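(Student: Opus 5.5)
The argument is elementary and proceeds in three layers: the invariants, the multiplier $Q_w$, and then the scalar function $g$.

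\emph{Step 1: the invariants.} We write $M(w)-M(v)=\Pi_0(|w|^2-|v|^2)$. Then
\[
|M(w)-M(v)|\le (2\pi)^{-1}\big(\|w\|_{L^2}+\|v\|_{L^2}\big)\|w-v\|_{L^2}\le C_R\|w-v\|_{H^\gamma}.
\]
For the momentum we use the decomposition
\[
w\partial_x\bar w-v\partial_x\bar v=(w-v)\partial_x\bar w+v\,\partial_x(\bar w-\bar v),
\]
and Cauchy--Schwarz gives
\[
|P(w)-P(v)|\le C\big(\|w-v\|_{L^2}\|w\|_{H^1}+\|v\|_{L^2}\|w-v\|_{H^1}\big).
\]
Since $\gamma>3/2\ge1$, this is bounded by $C_R\|w-v\|_{H^\gamma}$. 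The same two bounds with $v=0$ give $|M(w)|+|P(w)|\le C_R$. Only $\gamma\ge1$ is really needed here; $\gamma>3/2$ is simply inherited from the setting.

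\emph{Step 2: the multiplier $Q_w$.} For $k\ne0$ we have $|(ik)^{-1}|\le1$. Hence
\[
|Q_w(k)|\le|M(w)|+|P(w)|\le C_R
\]
uniformly in $k$, and likewise
\[
|Q_w(k)-Q_v(k)|\le|M(w)-M(v)|+|P(w)-P(v)|.
\]
Both claimed bounds on $Q$ then follow from Step 1.

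\emph{Step 3: the function $g$.} Note that $g$ is entire, with $g(0)=g'(0)=0$ and $g''(z)=-\e^{-z}$. Set $z_w=2i\mu\Tau Q_w(k)$. By Step 2, $|z_w|\le 2|\mu|C_R\Tau\le C_R'$ for $\Tau\le1$, so all relevant points lie in a fixed disc $D$ of radius $\rho_R$ around $0$. Taylor's formula with integral remainder gives
\[
|g(z)|\le \tfrac12\sup_{D}|\e^{-z}|\,|z|^2\le C_R\Tau^2 .
\]
For the difference we use $g'(z)=-1+\e^{-z}=-\int_0^1 z\e^{-\theta z}\,\d\theta$, which gives $|g'(z)|\le C_R|z|$ on $D$. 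Since $D$ is convex, the segment from $z_v$ to $z_w$ lies in $D$, and on it every point has modulus at most $C\Tau$. Hence
\[
|g(z_w)-g(z_v)|\le \sup_{[z_v,z_w]}|g'|\,|z_w-z_v|\le C_R\Tau\cdot 2|\mu|\Tau|Q_w(k)-Q_v(k)|\le C_R\Tau^2\|w-v\|_{H^\gamma},
\]
uniformly in $k$.

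No step here is a genuine obstacle. The only point needing care is extracting $\Tau^2$ rather than $\Tau$ in the Lipschitz bound. This comes from $g'(0)=0$: we bound $g'$ by $C|z|=O(\Tau)$ along the segment, which requires the uniform-in-$k$ bound on $Q$ from Step 2.
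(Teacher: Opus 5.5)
Your proposal is correct and follows essentially the same route as the paper: the same splitting $w\partial_x\bar w - v\partial_x\bar v=(w-v)\partial_x\bar w+v\,\partial_x(\bar w-\bar v)$ for the invariants, $|(ik)^{-1}|\le 1$ for $Q$, and $g(0)=g'(0)=0$ combined with $|g'(z)|\le C_R\Tau$ on a disc of radius $O(\Tau)$ for both $\Tau^2$ bounds. You apply the mean value \emph{inequality} along the segment $[z_v,z_w]$, which is the correct form for the complex-valued $g$ (the paper simply invokes the ``mean value theorem''), and your remark that $\gamma\ge 1$ already suffices for Step 1 is accurate.
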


\begin{proof}
Since $M(w)-M(v)
=
\Pi_0\big((w-v)\bar w+v(\bar w-\bar v)\big),$
we obtain
\begin{equation*}
    |M(w)-M(v)|
\le C_R\|w-v\|_{H^\gamma}.
\end{equation*}
Similarly, $P(w)-P(v)
=
\Pi_0\big((w-v)\partial_x\bar w
+
v\partial_x(\bar w-\bar v)\big).$ Since $\gamma>3/2$, we obtain~\eqref{eqn:MP_stab}.
The estimates for \(Q_w\) and \(Q_w-Q_v\) follow immediately from $|(ik)^{-1}|\le1,$ when $k\neq0.$

It remains to estimate $g$. Since $g(0)=g'(0)=0,$ and $Q_w(k)$ is uniformly bounded on $B_{R,\gamma}$, Taylor's formula gives $\left|g\big(2i\mu\Tau Q_w(k)\big)\right|
\le C_R\Tau^2.$
Moreover, $g'(z)=-1+\e^{-z},$ so for \(|z|\le C_R\Tau\) we have $|g'(z)|\le C_R\Tau.$ By the mean value theorem,
\[
\begin{aligned}
 \left|
g\big(2i\mu\Tau Q_w(k)\big)
-
g\big(2i\mu\Tau Q_v(k)\big)
\right|   \le
C_R\Tau
\left|
2i\mu\Tau \big(Q_w(k)-Q_v(k)\big)
\right|
\le
C_R\Tau^2\|w-v\|_{H^\gamma}.
\end{aligned}
\]
This completes the proof of the lemma.
\end{proof}

\begin{lem}\label{lem:cubic}
With the CP~\eqref{eqn:cp_2} and the FP~\eqref{eqn:fp_} applied to problem~\eqref{eqn:cubic_1d}, Assumption~\ref{assum:cp} holds. 
\end{lem}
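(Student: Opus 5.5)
The plan is to verify the two conditions of Assumption~\ref{assum:cp} with $r=\gamma>3/2$, working frequency by frequency as in \cite{MR4405493}. We write the exact local step and the CP in a common frequency form, so that every difference falls into one of the classes $\mathcal{D}_m$ handled by Lemma~\ref{lem:Dm}, or into the mass/momentum terms handled by Lemma~\ref{lem:MP_stability}.

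\textbf{Condition (i).} We split $\Phi_{c,2}(\Tau;u)$ into its pieces.
\begin{itemize}
\item The phase term is $\e^{i\Tau\partial_x^2}(\e^{-2i\mu\Tau Q_u(\partial_x)}-1)u$, with multiplier $\e^{-z}-1=-z-g(z)$ and $z=2i\mu\Tau Q_u(k)$. Its difference between $w_0$ and $v_0$ is bounded by $C_R\Tau\|w_0-v_0\|_{H^r}$, using the bounds on $Q$ and $g$ from Lemma~\ref{lem:MP_stability} together with $|M(w_0)-M(v_0)|+|P(w_0)-P(v_0)|\le C_R\|w_0-v_0\|_{H^r}$.
\item The terms $-i\mu\Tau\Pi_0(|u|^2u)$ and $2i\mu\Tau M(u)\Pi_0(u)$ are trivially Lipschitz with a factor $\Tau$.
\item The four remaining terms come in pairs of the form $\partial_x^{-2}[\cdots]-\e^{i\Tau\partial_x^2}\partial_x^{-2}[\cdots]$. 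Each pair equals $\mu\int_0^\Tau\e^{i(\Tau-s)\partial_x^2}(\ldots)\,\d s$ applied to a trilinear expression, obtained by integrating the phases exactly.
\end{itemize}
I would instead use the identity, valid by construction of the scheme, that $\Phi_{c,2}$ minus the phase term equals $-i\mu\int_0^\Tau \e^{i(\Tau-s)\partial_x^2}$ of the twisted product, with the dominant resonant part $2k_1k$ (times $\hat u$) replaced by its frozen version. Then each piece is $\Tau$ times a $D_3(1;\cdot,\cdot)$ term plus the exact phase piece. Lemma~\ref{lem:Dm}(ii) then gives (i).

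\textbf{Condition (ii).} Following \cite{MR4405493}, I would write
\[
(\F_\Tau-\G_\Tau)(w_0)=-i\mu\,{\rm R}(\Tau;w_0)+\mathcal{E}(\Tau;w_0),
\]
where ${\rm R}(\Tau;w_0)=\int_0^\Tau\e^{i(\Tau-s)\partial_x^2}\bigl(|w(s)|^2w(s)-|\e^{is\partial_x^2}w_0|^2\e^{is\partial_x^2}w_0\bigr)\d s$ is the twisted remainder, and $\mathcal{E}$ is the error of approximating the twisted Duhamel integral by the CP.
\begin{itemize}
\item \textbf{Twisted remainder.} The difference ${\rm R}(\Tau;w_0)-{\rm R}(\Tau;v_0)$ is handled exactly as in Theorem~\ref{thm:q1_conv}. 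Expand the cubic difference telescopically, and use $w(s)-\e^{is\partial_x^2}w_0=O(s)$ in $H^r$, with Lipschitz dependence on $w_0-v_0$ via Gronwall and the a priori bound \eqref{eqn:w_stab}. This gives $C_R\Tau^2\|w_0-v_0\|_{H^r}$, using only the algebra property \eqref{eqn:bilinear}.
\item \textbf{Phase approximation error.} The term $\mathcal{E}$ arises from $\e^{2is k_1 k}\approx$ the frozen phase $\e^{-2i\mu s Q}$ and from the expansion $\e^{2isk_1k}-1$ applied to the subleading interactions. In Fourier space, $\mathcal{E}$ is a sum of the following.
  \begin{itemize}
  \item Terms with multiplier $g(2i\mu\Tau Q_u(k))$. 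These are handled by Lemma~\ref{lem:MP_stability}, giving $C_R\Tau^2$ and a Lipschitz bound.
  \item Terms of the form $\Tau^2\cdot D_3(k^{-1}k_1k_2k_3)$, coming from Taylor-expanding $\e^{-2is(k_1k_2+\dots)}$ to second order. These are handled by Lemma~\ref{lem:Dm}(iii).
  \item Zero-mode terms $\Tau^2\mathscr{F}D_3(k_2k_3)(0)$, handled by Lemma~\ref{lem:Dm}(i).
  \item Products of $\Tau(M(w_0)-M(v_0))$ or $\Tau(P(w_0)-P(v_0))$ with $\Tau$-small quantities.
  \end{itemize}
\end{itemize}
Bounding each difference with Lemmas~\ref{lem:Dm} and~\ref{lem:MP_stability} yields the bound $C_R\Tau^2\|w_0-v_0\|_{H^r}$.

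\textbf{Main obstacle.} The hard step is the bookkeeping of $\mathcal{E}$. We must check that every frequency interaction left over after subtracting the exactly integrated part carries either a factor $\Tau^2$ with a multiplier no worse than $k^{-1}k_1k_2k_3$, $k_2k_3$ at $k=0$, or $1$, or else a factor $g$.

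A particular difficulty is that \cite{MR4405493} froze $M,P$ at the initial values, while here they depend on the input. The resonant phase $2k_1k$ produces $-2i\mu\Tau(M(u)+P(u)(ik)^{-1})$ only up to a remainder in which $|u|^2$ is replaced by its nonzero modes. I would first redo that decomposition with $M(u_0),P(u_0)$ replaced by $M(u),P(u)$, verifying that the identity is algebraic in the input and hence still valid. I would then differentiate it in the input, using the telescoping in Lemma~\ref{lem:Dm}. The factor $\Tau^2$ needs $|\e^{ia}-1-ia|\le a^2/2$ combined with $|k^{-1}|$ absorbing one derivative. This is exactly why $\gamma>3/2$ is required.
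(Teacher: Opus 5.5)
\paragraph{Condition (ii).} Your plan follows the paper's route. Your ${\rm R}$ is ${\rm R}_{4,1}$, and the terms you expect in $\mathcal{E}$ are the rest of \eqref{eqn:R_cubic}: ${\rm R}_{4,2}$ (Lemma~\ref{lem:Dm}(i)), ${\rm R}_{4,4}$ (Lemma~\ref{lem:Dm}(iii)), and the $g$-terms ${\rm R}_{4,3},{\rm R}_{4,5}$ (Lemma~\ref{lem:MP_stability}). The bookkeeping you defer is therefore exactly that five-term split. One correction: the $\Tau^2$ in ${\rm R}_{4,4}$ comes from the first-order bound $|\e^{2isk_2k_3}-1|\le 2s|k_2k_3|$; the second-order Taylor bound is only needed for $g$.

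\paragraph{Condition (i): the gap.} You assert that $\Phi_{c,2}$ minus the phase term is the twisted Duhamel integral with only the resonant interaction frozen. That identity is false. Integrate the phases of the four $\partial_x^{-1},\partial_x^{-2}$ terms in $s$ and symmetrize in $(k_2,k_3)$. On modes $k\neq0$, apart from the $g$-term, $\Phi_{c,2}$ then has the multiplier
\[
-i\mu\,\e^{-i\Tau k^2}\int_0^{\Tau}\Big[\Big(1+\frac{k_1}{k}\Big)\e^{2is(kk_1+k_2k_3)}-\frac{k_1}{k}\,\e^{2iskk_1}\Big]\,\d s .
\]
So $\Phi_{c,2}$ is the twisted integral minus the nonresonant phase truncation whose $w$--$v$ difference is ${\rm R}_{4,4}$. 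That is a term you yourself put into $\mathcal{E}$ for (ii), so your (i) and (ii) contradict each other. With only one factor $\Tau$ to spend, this multiplier is bounded by $\Tau(1+2|k_1|/|k|)$, which is unbounded. Hence these pieces are not in $\mathcal{D}_3(1)$, and Lemma~\ref{lem:Dm}(ii) does not give (i): a derivative landing on $\bar w$ must be absorbed by $\partial_x^{-1}$.

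\paragraph{How the paper handles (i).} It writes the pairs as $A(\Tau;w)-A(0;w)$ and $B(\Tau;w)-B(0;w)$. Differentiating in $s$ produces ${\rm R}_{3,2}$--${\rm R}_{3,4}$, which are integrals of $\partial_x^{-1}$ applied to trilinear products carrying one derivative. These are bounded in $H^\gamma$ through the algebra property of $H^{\gamma-1}$, using $\gamma>3/2$. The $O(\Tau)$ part of $\e^{-2i\mu\Tau(M+P\partial_x^{-1})}$ cancels the $M,P$ terms generated by $A$, leaving only the $g$-term ${\rm R}_{3,5}$.

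\paragraph{A cheaper repair within your framework.} Once (ii) is established, (i) is automatic. Write
\[
\Phi_{c,2}(\Tau;w_0)=-i\mu\int_0^{\Tau}\e^{i(\Tau-s)\partial_x^2}|w(s)|^2w(s)\,\d s-(\F_{\Tau}-\G_{\Tau})(w_0).
\]
The first term is Lipschitz with constant $C_R\Tau$ by the cubic analogue of \eqref{eqn:w-v_stab}. The second is Lipschitz with constant $C_R\Tau^2$ by (ii).
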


\begin{proof}
We first prove the stability condition~(i). We first define 
\begin{equation*}
    \begin{aligned}
        A(s;w) &=\e^{i(\Tau-s)\partial_x^2} \partial_x^{-1} [\e^{is\partial_x^2} w \cdot \partial_x^{-1} (|\e^{is\partial_x^2} w|^2)] \\
        B(s;w)&= -\frac{1}{2} \e^{i(\Tau-s) \partial_x^2} \partial_x^{-2}[(\e^{-is\partial_x^2}\bar{w})\cdot \e^{is\partial_x^2} (w^2)]. 
    \end{aligned}
\end{equation*}
The CP~\eqref{eqn:cp_2} can be represented as 
\begin{equation*}
\begin{aligned}
    \Phi_{c,2} ( \Tau;w ) &=\e^{i\Tau \partial_{x}^{2}}( \e^{-2i\mu\Tau (M(w)+P(w)\partial_{x}^{-1}) }-1)w-i\mu \Tau \Pi_{0} \left( |w|^{2}w \right) +2i\mu \Tau M (w)\Pi_{0} \left( w \right)\\
    &\quad + \mu B(\Tau,w) - \mu B(0,w) + \mu A(\Tau,w) - \mu A(0,w).
\end{aligned}
\end{equation*}
By the fundamental theorem, we obtain
\begin{align*}
    A(\Tau;w) - A(0;w) &= -2i\int_0^{\Tau} \e^{i(\Tau-s)\partial_x^2} \partial_x^{-1} [(\e^{-is\partial_x^2} \partial_x\bar{w})\cdot (\e^{is\partial_x^2} w)^2]~\d s\\
    &\quad -2 i \int_0^{\Tau} \e^{i(\Tau-s)\partial_x^2}\partial_x^{-1} [|\e^{is\partial_x^2}w|^2 \e^{is\partial_x^2} \partial_x w]\, \d s\\
    &\quad +2i\Tau P(w)\e^{i\Tau \partial_x^2} \partial_x^{-1} w+2i\Tau M(w)(\e^{i\Tau \partial_x^2}w -\Pi_0 (w)),\\
    B(\Tau;w) - B(0;w)&= i \int_0^{\Tau} \e^{i(\Tau-s)\partial_x^2}\partial_x^{-1} [(\e^{-is\partial_x^2}\partial_x \bar{w})\cdot \e^{is\partial_x^2} (w^2)]\,\d s.
\end{align*}
Combined with 
\begin{equation*}
\begin{aligned}
    (\e^{-2i\mu\Tau (M(w)+P(w)\partial_{x}^{-1}) }-1)w &= [\e^{-2i\mu\Tau (M(w)+P(w)\partial_{x}^{-1}) }-1+2i\mu\Tau (M(w)+P(w)\partial_{x}^{-1})]w \\
    &\quad -2i\mu\Tau (M(w)+P(w)\partial_{x}^{-1})w,
    \end{aligned}
\end{equation*}
we obtain
\begin{equation*}
\begin{aligned}
    &\Phi_{c,2} (\Tau;w) = -i\mu \Tau \Pi_0 (|w|^2w) \\
    & -2i\mu \int_0^{\Tau} \e^{i(\Tau -s)\partial_x^2} \partial_x^{-1}[(\e^{-is\partial_x^2}\partial_x \bar{w})\cdot (\e^{is\partial_x^2}w)^2]\, \d s\\
    & -2i\mu \int_0^{\Tau} \e^{i (\Tau -s)\partial_x^2} \partial_x^{-1} [|\e^{is\partial_x^2}w|^2 \e^{is\partial_x^2}\partial_x w]\, \d s\\
    & + i\mu \int_0^{\Tau} \e^{i(\Tau -s)\partial_x^2} \partial_x^{-1} [(\e^{-is\partial_x^2}\partial_x\bar{w})\cdot \e^{is\partial_x^2} (w^2)]\, \d s\\
    & + \e^{i\Tau \partial_x^2} [\e^{-2i\mu\Tau (M(w)+P(w)\partial_{x}^{-1}) }-1+2i\mu\Tau (M(w)+P(w)\partial_{x}^{-1})]w\\
    &=:  {\rm R}_{3,1} (\Tau;w) +  {\rm R}_{3,2} (\Tau;w) + {\rm R}_{3,3}(\Tau;w) + {\rm R}_{3,4} (\Tau;w) + {\rm R}_{3,5}(\Tau;w).
\end{aligned}
\end{equation*}
${\rm R}_{3,1}(\Tau;w)-{\rm R}_{3,1}(\Tau;v)$ corresponds to $D_3(1;w,v)$ in~\eqref{eqn:lem2}. By Lemma~\ref{lem:Dm} and the boundedness of $\Pi_0$, we obtain
\begin{equation*}
    \|{\rm R}_{3,1} (\Tau;w) - {\rm R}_{3,1}(\Tau;v)\|_{H^\gamma}\leq C_R\Tau \|w-v\|_{H^\gamma}.
\end{equation*}
By Sobolev inequalities, the algebra property of $H^{\gamma-1}$ and~\eqref{eqn:bilinear}, we have that for any $\gamma> 3/2$,
\begin{align*}
     &\|{\rm R}_{3,2} (\Tau;w) - {\rm R}_{3,2}(\Tau;v)\|_{H^\gamma} \\
     &\leq C\int_0^{\Tau} \| (\e^{-is\partial_x^2}\partial_x \bar{w})\cdot (\e^{is\partial_x^2}w)^2- (\e^{-is\partial_x^2}\partial_x \bar{v})\cdot (\e^{is\partial_x^2}v)^2\|_{H^{\gamma-1}}\, \d s\\
     &\leq C_R \Tau \|w-v\|_{H^\gamma}.
\end{align*}
The same argument applies to ${\rm R}_{3,3}$ and ${\rm R}_{3,4}$, we obtain
\begin{equation*}
    \|{\rm R}_{3,j} (\Tau;w) - {\rm R}_{3,j}(\Tau;v)\|_{H^\gamma}\leq C_R\Tau \|w-v\|_{H^\gamma}
,\quad j=3,4.
\end{equation*}
It remains to estimate ${\rm R}_{3,5}$. For $k\in \mathbb{Z}$, let $Q_w (k)$ and $g$ be defined as in Lemma~\ref{lem:MP_stability}. Then 
\begin{align*}
&\mathscr F\big(R_{3,5}(\Tau;w)-R_{3,5}(\Tau;v)\big)(k) \\
&=
-\e^{-i\Tau k^2}
g\big(2i\mu\Tau Q_w(k)\big)(\hat w_k-\hat v_k) 
-\e^{-i\Tau k^2}
\left[
g\big(2i\mu\Tau Q_w(k)\big)
-
g\big(2i\mu\Tau Q_v(k)\big)
\right]\hat v_k .
\end{align*}
By Lemma~\ref{lem:MP_stability}, we obtain
\begin{equation*}
    \begin{aligned}
\|R_{3,5}(\Tau;w)-R_{3,5}(\Tau;v)\|_{H^\gamma}
&\le
C_R\Tau^2\|w-v\|_{H^\gamma}
+
C_R\Tau^2\|w-v\|_{H^\gamma}\|v\|_{H^\gamma}  \\
&\le
C_R\Tau^2\|w-v\|_{H^\gamma} \le
C_R\Tau\|w-v\|_{H^\gamma}.
\end{aligned}
\end{equation*}
Therefore, the stability condition~(i) holds.

We now turn to Condition~(ii). The expression can be split into five terms:
    \begin{align*}
&(\F_{\Tau}  -\G_{\Tau})(w_0) - (\F_{\Tau}  -\G_{\Tau})(v_0) \\
&= {\rm R}_{4,1}(\Tau; w_0,v_0) + {\rm R}_{4,2}(\Tau;w_0,v_0)  + {\rm R}_{4,3}(\Tau;w_0,v_0)  + {\rm R}_{4,4}(\Tau;w_0,v_0)  + {\rm R}_{4,5}(\Tau;w_0,v_0),   
\end{align*}
where each \({\rm R}_{4,i}~(i=1,\dotsc,5)\) is defined as follows:
\begin{equation}\label{eqn:R_cubic}
\begin{aligned}
{\rm R}_{4,1}(\Tau;w_0,v_0)
&= -i\mu \int_0^{\Tau} \e^{i(\Tau-s)\partial_x^2}
\Big[
\big(|w(s)|^2w(s)-|\e^{is\partial_x^2}w_0|^2\e^{is\partial_x^2}w_0\big)  \\
&\qquad\qquad
-\big(|v(s)|^2v(s)-|\e^{is\partial_x^2}v_0|^2\e^{is\partial_x^2}v_0\big)
\Big]\,\d s, \\
\mathscr F({\rm R}_{4,2}(\Tau;w,v))(0)
&=  -i\mu
\sum_{k_1+k_2+k_3=0}
\int_0^{\Tau}
\big(\e^{is(k_1^2-k_2^2-k_3^2)}-1\big)\,\d s  \\
&\qquad\qquad\times
\big(
\widehat{\bar w}_{k_1}\hat w_{k_2}\hat w_{k_3}
-
\widehat{\bar v}_{k_1}\hat v_{k_2}\hat v_{k_3}
\big), \\
\mathscr F({\rm R}_{4,3}(\Tau;w,v))(0)
& = 
g\big(2i\mu\Tau M(w)\big)\hat w_0
-
g\big(2i\mu\Tau M(v)\big)\hat v_0,~g(z)=1-z-\e^{-z},\\
\mathscr F({\rm R}_{4,4}(\Tau;w,v))(k)
& =  i\mu \e^{-i\Tau k^2}
\sum_{k=k_1+k_2+k_3}
\int_0^{\Tau}
\frac{k_1}{k}
\big(\e^{2isk_2k_3}-1\big)\e^{2iskk_1}\,\d s  \\
&\qquad\qquad\times
\big(
\widehat{\bar w}_{k_1}\hat w_{k_2}\hat w_{k_3}
-
\widehat{\bar v}_{k_1}\hat v_{k_2}\hat v_{k_3}
\big),
\qquad k\neq0, \\
\mathscr F({\rm R}_{4,5}(\Tau;w,v))(k)
&= 
\e^{-i\Tau k^2}
g\Big(2i\mu\Tau\big(M(w)+P(w)(ik)^{-1}\big)\Big)\hat w_k  \\
&\quad -
\e^{-i\Tau k^2}
g\Big(2i\mu\Tau\big(M(v)+P(v)(ik)^{-1}\big)\Big)\hat v_k,
\qquad k\neq0, \\
\mathscr F({\rm R}_{4,2}(\Tau;w,v))(k)
&= 0,~\mathscr F({\rm R}_{4,3}(\Tau;w,v))(k)
=0,\qquad k\neq0, \\ 
\mathscr F({\rm R}_{4,4}(\Tau;w,v))(0)
&= 0,~\mathscr F({\rm R}_{4,5}(\Tau;w,v))(0)=0.
\end{aligned}
\end{equation}
where $w(s)$ and $v(s)$ in ${\rm R}_{4,1}$ are the exact solutions with initial data $w_0$ and $v_0$ at time $s$, respectively. Following the proof of Lemma~\ref{lem:quadratic}, and $\gamma >3/2$, we obtain 
\begin{equation*}
    \|{\rm R}_{4,1} (\Tau;w_0,v_0)\|_{H^\gamma} \leq C \Tau^2  \|w_0-v_0\|_{H^\gamma}.
\end{equation*}
For ${\rm R}_{4,2}$, the relation $k_1 =-k_2-k_3$ yields
\begin{equation*}
\Big|\int_{0}^{\Tau} \left( \e^{is\left( k_{1}^{2}-k_{2}^{2}-k_{3}^{2} \right)}-1 \right)\, \d s\Big|
 \le \int_{0}^{\Tau} 2s |k_{2}k_{3}| \d s \le  \Tau^{2} |k_{2}k_{3}|.
\end{equation*}
Since ${\rm R}_{4,2}$ is supported only at the zero Fourier mode, we have $\|{\rm R}_{4,2} (\Tau;w,v)\|_{H^\gamma} \leq C|\mathscr{F}({\rm {R}_{4,2}}(\Tau;w,v))(0)|$. Applying~\eqref{eqn:lem1} to ${\rm R}_{4,2}$ then gives $\|{\rm R}_{4,2} (\Tau;w,v)\|_{H^\gamma} \leq C \Tau^2 \|w-v\|_{H^\gamma}.$ For ${\rm R}_{4,3}$, recall that it is supported only at the zero mode, hence 
\begin{align*}
\mathscr F(R_{4,3}(\Tau;w,v))(0)
=
g\big(2i\mu\Tau M(w)\big)(\hat w_0-\hat v_0) +
\left[
g\big(2i\mu\Tau M(w)\big)
-
g\big(2i\mu\Tau M(v)\big)
\right]\hat v_0.
\end{align*}
Since $g(0)=g'(0)=0$, we obtain $|g(2i\mu\Tau M(w))|\leq C_R \Tau^2$. By the mean value theorem, we obtain $g\big(2i\mu\Tau M(w)\big)
-
g\big(2i\mu\Tau M(v)\big) = 2i\mu \Tau g'(\xi)(M(w) - M(v))$ for some $|\xi| \leq C_R \Tau$. Since $R_{4,3} (\Tau;w,v)$ is only supported at zero Fourier mode, $\| {\rm{R}}_{4,3} (\Tau; w,v)\|_{H^r} \leq C|\mathscr{F}({\rm {R}_{4,3}}(\Tau;w,v))(0)|$. By Lemma~\ref{lem:MP_stability}, we obtain
\begin{equation*}
\begin{aligned}
\|{\rm R}_{4,3}(\Tau;w,v)\|_{H^\gamma}
&\le
C_R\Tau^2\|w-v\|_{H^\gamma}
+
C_R\Tau^2\|w-v\|_{H^\gamma}\|v\|_{H^\gamma} \\
&\le
C_R\Tau^2\|w-v\|_{H^\gamma}.
\end{aligned}
\end{equation*}
For ${\rm R}_{4,4}(\Tau;w,v)$, the bound $|\e^{2isk_2 k_3}-1| \leq 2\Tau |k_2 k_3|$ holds for $s\in [0,\Tau]$. The multiplier obtained from the oscillatory remainder is exactly of the form $k^{-1}k_1k_2k_3$, and therefore \eqref{eqn:lem3} applies:
\begin{equation*}
    \| {\rm R}_{4,4} (\Tau;w,v) \|_{H^\gamma} \leq C \Tau^2 \|w - v \|_{H^\gamma}.
\end{equation*}
It remains to estimate ${\rm R}_{4,5}$. For $k\neq0$,
\begin{align*}
\mathscr F({\rm R}_{4,5}(\Tau;w,v))(k)
&=
\e^{-i\Tau k^2}
g\big(2i\mu\Tau Q_w(k)\big)(\hat w_k-\hat v_k) \\
&\quad+
\e^{-i\Tau k^2}
\left[
g\big(2i\mu\Tau Q_w(k)\big)
-
g\big(2i\mu\Tau Q_v(k)\big)
\right]\hat v_k .
\end{align*}
Using Lemma~\ref{lem:MP_stability} again and $|\e^{-i\Tau k^2}|=1$, we obtain
\begin{equation*}
    \begin{aligned}
\|{\rm R}_{4,5}(\Tau;w,v)\|_{H^\gamma}
&\le
C_R\Tau^2\|w-v\|_{H^\gamma}
+
C_R\Tau^2\|w-v\|_{H^\gamma}\|v\|_{H^\gamma}  \\
&\le
C_R\Tau^2\|w-v\|_{H^\gamma}.
\end{aligned}
\end{equation*}
Combining these estimates establishes Condition~(ii).

\end{proof}

\begin{remark}\label{rmk:CP_c2}
In the original scheme proposed in~\cite[(Eq.~(1.11))]{MR4405493}, the local truncation error is analyzed for $(\F_{\Tau}-\G_{\Tau})(u(T_n))$, where $u(T_n)$ denotes the exact solution of~\eqref{eqn:cubic_1d}. Consequently, the quantities involved are $M(u(T_n))$ and $P(u(T_n))$. By the conservation property, one can further use $M(u(T_n))=M(u_0)$ and $P(u(T_n))=P(u_0)$ in~\eqref{eqn:cp_2}. In the parareal setting, however, we evaluate $(\F_{\Tau}-\G_{\Tau})(U_n^k)$ instead, and hence use $M(U_n^k)$ and $P(U_n^k)$ in $\G(U_n^k)$ as defined in~\eqref{eqn:cp_2}. We cannot further replace these quantities by $M_0$ and $P_0$, since in general $M(U_n^k)\neq M_0$ and $P(U_n^k)\neq P_0$.
\end{remark}

As a direct result of Lemma \ref{lem:cubic} and Theorem \ref{assum:cp}, we have the following corollary for the convergence of the proposed exponential low-regularity parareal algorithm.

\begin{coro}\label{cor:42_conv}
Assume that the problem~\eqref{eqn:cubic_1d} admits an exact solution
$u\in C([0,T];H^r(\mathbb{T}))$ with $r>3/2$. Let $U_n^k$ denote the parareal approximation obtained by using the CP~\eqref{eqn:cp_}, augmented by the nonhomogeneous term~\eqref{eqn:cp_2}, together with the exact FP~\eqref{eqn:fp_}, at the $k$-th iteration and the $n$-th coarse grid point, and let $U_n=u(T_n)$ be the exact solution to the problem~\eqref{eqn:cubic_1d}. Then there exist constants $\Tau_0$, $C_1$, and $C_2$, independent of $n$ and $k$, such that, for any $0<\Tau\leq \Tau_0$,
\begin{equation*}
\max_{1\le n\le N_c} \|U_n^k-U_n\|_{H^r}
\leq C_1(C_2\Tau)^k .
\end{equation*}
\end{coro}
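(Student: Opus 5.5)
The plan is to combine Lemma~\ref{lem:cubic} with Theorem~\ref{thm:general_conv}, taking $d=1$ and $r>3/2$. Lemma~\ref{lem:cubic} shows that Assumption~\ref{assum:cp} holds for the CP~\eqref{eqn:cp_} with $\Phi_c=\Phi_{c,2}$, for every $\gamma>3/2$ and any fixed radius $R$. The constant $C_R$ there depends only on $R$, $\gamma$ and $\mu$. We then run the argument of Theorem~\ref{thm:q1_conv}, which Theorem~\ref{thm:general_conv} abstracts.

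\textbf{Step 1: a priori bound.} Set $M=\max_{t\in[0,T]}\|u(t)\|_{H^r}$ and $R=M+2\rho$, and fix the constant $C_R$ from Lemma~\ref{lem:cubic} on $B_{R,r}$. Two facts used inside that lemma are needed on $[0,\Tau]$ for data in $B_R$:
\begin{itemize}
\item the local stability $\|w(s)\|_{H^r}\le C\|w_0\|_{H^r}$, obtained by Bihari--LaSalle as in~\eqref{eqn:w_stab} for a cubic nonlinearity;
\item the Lipschitz bound $\|w(s)-v(s)\|_{H^r}\le C_R\|w_0-v_0\|_{H^r}$, obtained by Gronwall as in~\eqref{eqn:w-v_stab}.
\end{itemize}
Both hold once $\Tau\le\Tau_0(R)$. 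This justifies evaluating the exact fine propagator at every iterate that stays in $B_R$.

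\textbf{Step 2: error recursion.} Write $E_n^k=U_n^k-U_n$. As in~\eqref{eqn:para err2}, unroll
\[
E_{n+1}^{k+1}=\e^{i\Tau\partial_x^2}E_n^{k+1}+\bigl[\Phi_{c,2}(\Tau;U_n^{k+1})-\Phi_{c,2}(\Tau;U_n)\bigr]+\bigl[(\F_\Tau-\G_\Tau)(U_n^k)-(\F_\Tau-\G_\Tau)(U_n)\bigr].
\]
Since $\e^{i\Tau\partial_x^2}$ is unitary on $H^r$, conditions (i)--(ii) of Assumption~\ref{assum:cp} give
\[
\|E^{k+1}_{n+1}\|_{H^r}\le C_R\Tau\sum_{j\le n}\|E_j^{k+1}\|_{H^r}+C_R\Tau^2\sum_{j\le n}\|E_j^k\|_{H^r}.
\]
A discrete Gronwall argument, using $n\Tau\le T$, then yields
\[
\max_n\|E_n^{k+1}\|_{H^r}\le C_RT\e^{C_RT}\Tau\,\max_n\|E_n^k\|_{H^r}.
\]

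\textbf{Step 3: bootstrap and iteration.} Induct on $k$ and, inside each $k$, on $n$. The previous iterates and $U_j^{k+1}$ for $j\le n$ lie in $B_R$, so the displayed bound gives $\|U_{n+1}^{k+1}\|_{H^r}\le M+\rho<R$. This requires $C_RT\e^{C_RT}\Tau(R+M)\le\rho$, which fixes $\Tau_0$.

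Iterating the contraction then gives $\max_n\|E_n^k\|_{H^r}\le C_1(C_2\Tau)^k$, with $C_1=\sup_j\|E_j^0\|_{H^r}$ and $C_2=C_RT\e^{C_RT}$. The $k=0$ iterate lies in $B_R$ by the same argument applied to the coarse sweep alone: $\G_\Tau$ is Lipschitz with constant $1+C_R\Tau$, and its local truncation error is $\|(\F_\Tau-\G_\Tau)(U_n)\|_{H^r}\le C\Tau^2$, obtained from (ii) with $v_0=0$, since $\F_\Tau(0)=\G_\Tau(0)=0$.

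\textbf{Main obstacle.} The hard point is the uniformity of $C_R$ in Lemma~\ref{lem:cubic} across the whole iteration. In particular, $M(U_n^k)$ and $P(U_n^k)$ vary with the iterate, and Lemma~\ref{lem:MP_stability} controls them only for $\gamma>3/2$. This is why the corollary must assume $r>3/2$ rather than $r>1/2$. The bootstrap confinement to $B_R$ is the essential step ensuring that Lemma~\ref{lem:cubic} applies with one fixed constant at every $n$ and $k$.
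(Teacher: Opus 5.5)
Your proposal is correct and follows the paper's route: Lemma~\ref{lem:cubic} verifies Assumption~\ref{assum:cp}, and Theorem~\ref{thm:general_conv}, proved as Theorem~\ref{thm:q1_conv} via the bootstrap in $B_R$, discrete Gronwall in $n$ and iteration in $k$, gives the bound; your treatment of the $k=0$ coarse sweep using $(\F_\Tau-\G_\Tau)(0)=0$ is a detail the paper leaves implicit. One correction to your closing commentary: the threshold $r>3/2$ comes mainly from the cubic ELRI's own analysis (e.g.\ Lemma~\ref{lem:Dm}(iii) for ${\rm R}_{4,4}$ and the $H^{\gamma-1}$ algebra for ${\rm R}_{3,2}$--${\rm R}_{3,4}$), which needs $H^{3/2+}$ even with $M,P$ frozen, rather than primarily from Lemma~\ref{lem:MP_stability}.
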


\section{Numerical experiment}\label{sec:numerical}
In this section, we test the exponential low-regularity parareal method with different ELRIs on various NLSs. We also compare with four classical CPs, ERK1, ERK3, Lie splitting and Strang splitting. In the numerical experiments, we use a standard Fourier pseudospectral method for the space discretization where we choose the Fourier mode $K=2^{10}$ on $\mathbb{T}$ or $K_x = K_y = 2^9$ on $\mathbb{T}^2$. 

To illustrate the theoretical results in Section~\ref{sec:general case}, we first test the parareal algorithm for the one-dimensional cubic NLS. In addition, we present numerical results for the parareal method with the ELRI~\eqref{eqn:LRI_KSRN} applied to the cubic Schr\"{o}dinger equation on $\mathbb{T}^2$ and the quintic Schr\"{o}dinger equation on $\mathbb{T}$, which lie beyond the theoretical framework developed in Section~\ref{sec:general case}. Nevertheless, the numerical results indicate that the exponential low-regularity parareal method outperforms the classical one even when the initial data are smooth.

When demonstrating the convergence order stated in Theorem~\ref{thm:general_conv}, we initialize the parareal with the given initial data, cf. Figure~\ref{fig:C_NLSE_order}. This ensures that the constant $C_1=\sup_{0\leq i\leq N_c}\|U_i-U_0\|_{H^r}$ in~\eqref{eqn:c1 c2} remains independent of the coarse time step $\Tau$. In all other cases, parareal methods are initialized with the ELRI in that example, so that all parareal errors start from the same point, enabling a clear comparison. All the $L^2$ errors measured in this section are $\max_{0\leq \leq n \leq N_c}\|E_n^k\|_{L^2}$.

\subsection{Numerical results for cubic NLS}
First, we test the one-dimensional cubic NLS \eqref{eqn:cubic_1d} on $\mathbb{T}$. The error estimates are derived under the regularity assumption
$u_0\in H^{\frac{3}{2}+}(\mathbb{T})$. We begin with the borderline low-regularity case
\begin{equation*}
u_0(x)=\frac{\pi-|x-\pi|}{2}\in H^{\frac{3}{2}-}(\mathbb{T}).
\end{equation*}
In the computation, the ELRI~\eqref{eqn:cp_3} is used as the FP with fine time step $\tau=10^{-4}$ and final time $T=1$.
Figure~\ref{fig:C_NLSE_order} illustrates the results of Lemma~\ref{lem:cubic} and Theorem~\ref{thm:general_conv}. The same three CPs as before are tested: the ELRI~\eqref{eqn:cp_3}, Strang splitting, and ERK3. The parareal method with the ELRI achieves the desired order predicted by Theorem~\ref{thm:general_conv}, whereas the classical schemes again stagnate after the first iteration.

\begin{figure}[htbp!]
    \centering
\includegraphics[width=0.98\textwidth,trim={0.8cm 0.8cm 0.5cm 0.5cm},clip]{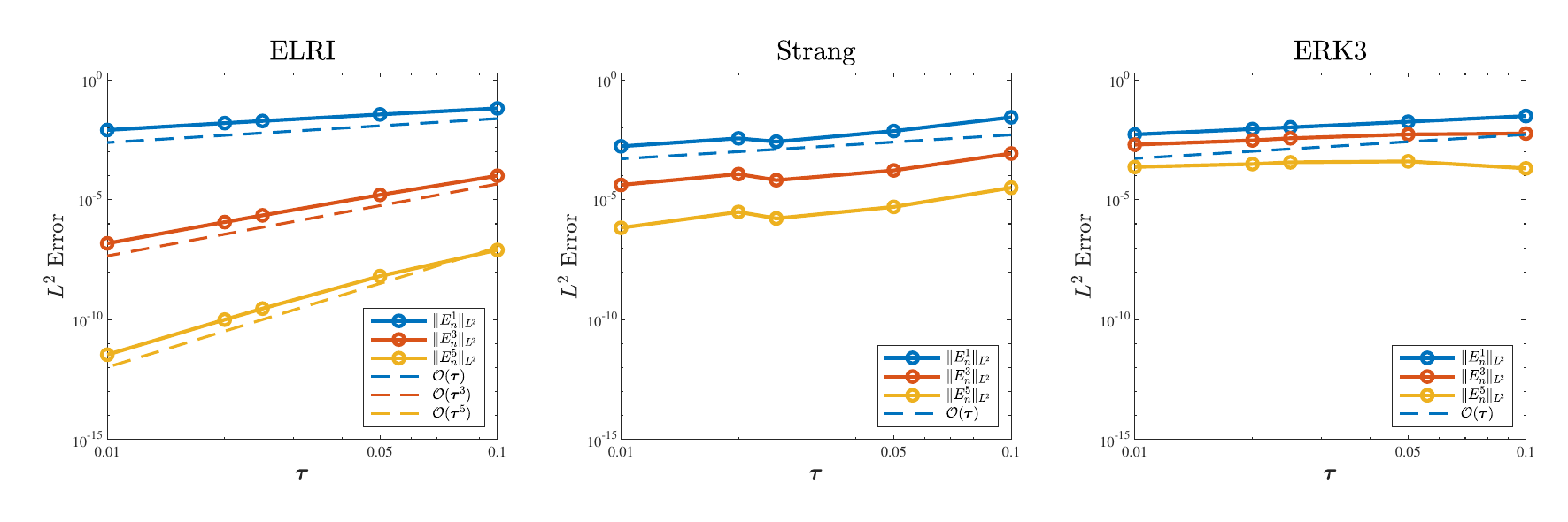}
\caption{$L^2$ error versus   time step $\Tau$ for the parareal applied to cubic NLS  \eqref{eqn:cubic_1d} ($H^{\frac32-}$ initial data, $\mu = 1$) with three CPs: ELRI~\eqref{eqn:cp_3} (left), Strang splitting (middle), and ERK3 (right).}
    \label{fig:C_NLSE_order}
\end{figure}

Furthermore, we test the convergence of the parareal method for the following two initial conditions:
\begin{align*}
u_0(x) &= 0.5\bigl(\min\{x,2\pi-x\}\bigr)^{1.01}\in H^{3/2-}(\mathbb{T}), \\
u_0(x) &= |x-\pi|^{1/2}\sin(x-\pi)\in H^{2-}(\mathbb{T}),
\end{align*}
using five different CPs. We present the convergence histories of the parareal iterations with $T=5$ throughout, taking $\mu=1$ in Figure~\ref{fig:C_NLSE_H4} and $\mu=-1$ in Figure~\ref{fig:C_NLSE_H4_f}. The ELRI~\eqref{eqn:cp_3} is employed as the FP with $\tau = 5\times 10^{-4}$ in both cases. 

We first consider the defocusing case. For the $H^{3/2}$ initial data with $\Tau=0.05, 0.02, 0.01$, and for the $H^2$ initial data with $\Tau=0.05$, the first parareal iteration using the ERK1 solver as the CP exhibits numerical blow-up. For both initial conditions, the exponential low-regularity parareal algorithm, with the scheme~\eqref{eqn:cp_3} used as the CP, significantly outperforms all classical solvers, regardless of their consistency orders.
The convergence behavior of most tested parareals clearly exhibits  two regimes. In the first regime, corresponding to the first few iterations, the parareal errors decrease rapidly. This depends on the regularity of the initial value~\cite{MR3033060}.  
In the second regime, the exponential low-regularity parareal shows rapid linear convergence as predicted in Theorem~\ref{thm:general_conv}, while the errors of the other schemes initially stagnate or even increase before eventually converging superlinearly. The performance of ERK3 is noticeably better than that of ERK1, but it remains far from satisfactory. These results indicate that ERK schemes are unsuitable for problems with limited regularity. It is worth noting that Lie and Strang splitting preserve the mass of the numerical solution, which likely accounts for their improved behavior. Finally, although the Crank--Nicolson (CN) scheme is generally a competitive solver for dispersive equations such as the NLS~\eqref{eqn:NLSE}, the parareal method with CN used as the coarse propagator suffers from numerical blow-up in all tested configurations. This indicates that CN is not suitable within the parareal framework, as is well known for diffusive problems.

For the focusing case shown in Figure~\ref{fig:C_NLSE_H4_f}, the exponential low-regularity parareal method again significantly outperforms the other schemes, exhibiting even greater advantages than in the defocusing case. The convergence still displays the same two-regime behavior. For the $H^{3/2}$ initial data, however, the splitting methods perform worse than ERK3.

\begin{figure}[htbp!]
    \centering
\includegraphics[width=0.98\textwidth,trim={0.8cm 2.2cm 0.5cm 0.5cm},clip]{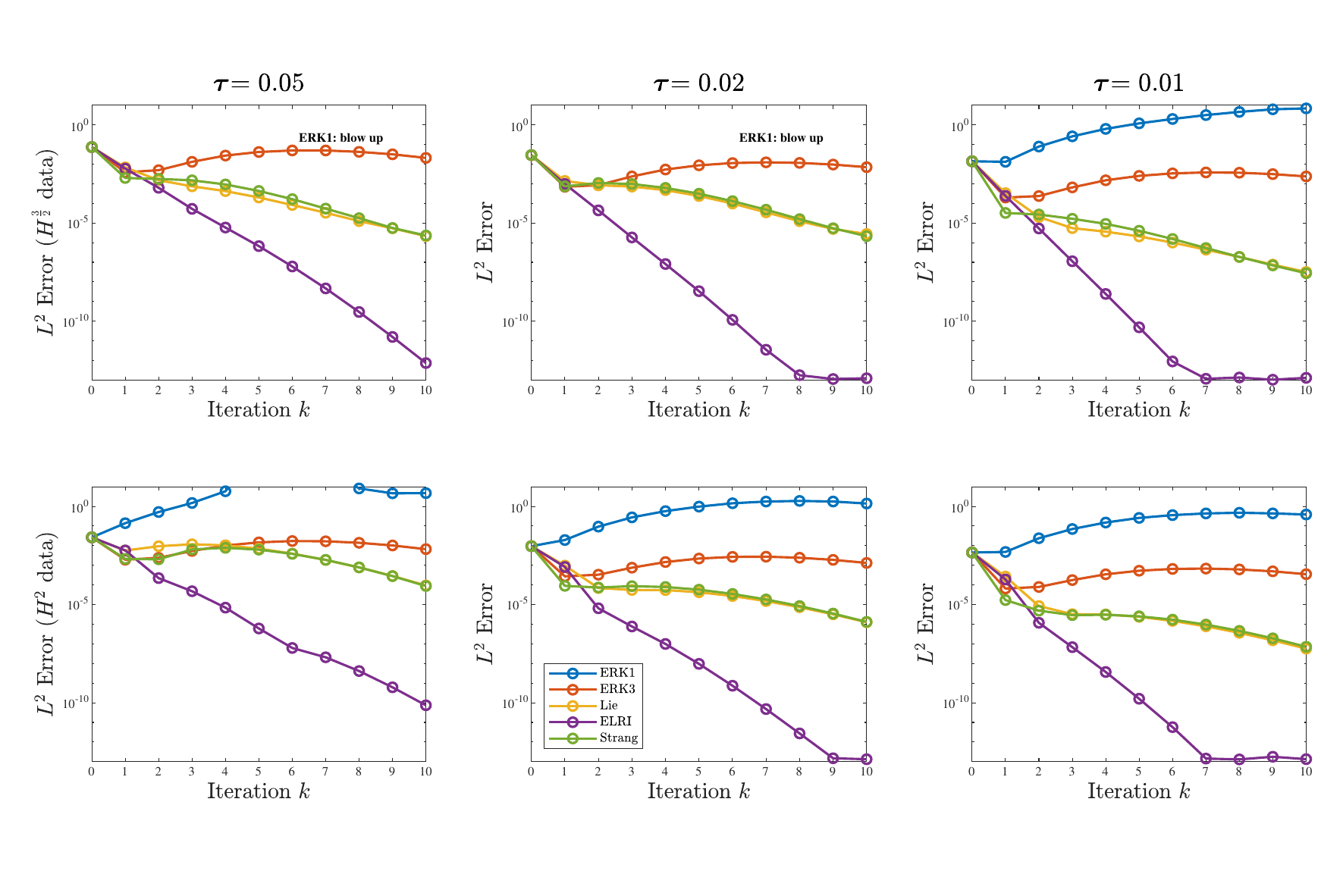}
    \caption{$L^2$ errors of parareal iterations for 1-D cubic NLS with $H^{3/2-}$ and $H^{2-}$ initial data, $\mu = 1$.  }
    \label{fig:C_NLSE_H4}
\end{figure}

\begin{figure}[htbp!]
    \centering
\includegraphics[width=0.98\textwidth,trim={0.8cm 2.2cm 0.5cm 0.5cm},clip]{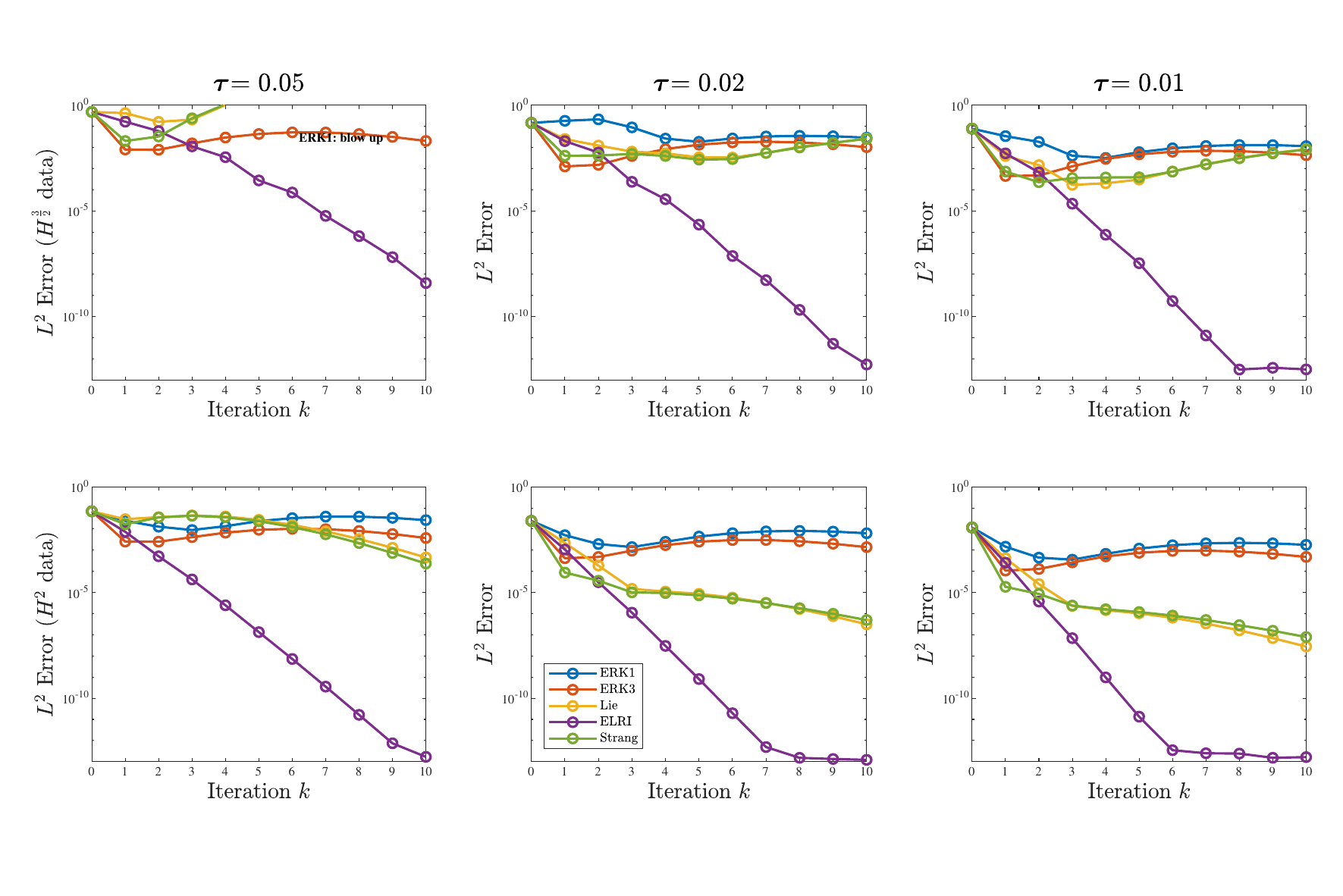}
    \caption{ $L^2$  errors of parareal iterations for 1-D cubic NLS with $H^{3/2-}$ and $H^{2-}$ initial data, {$\mu = -1$}.   }
    \label{fig:C_NLSE_H4_f}
\end{figure}

Next, we present numerical results for the cubic NLS~\eqref{eqn:NLSE} on $\mathbb{T}^2$. Here, the ELRI~\eqref{eqn:LRI_KSRN} is used as the CP in the parareal algorithm. We note that the convergence theory for the parareal method in this setting remains open, since the scheme may not satisfy condition~(ii) in Assumption~\ref{assum:cp}. Nevertheless, these preliminary numerical experiments demonstrate its potential and motivate further study.
We set $\mu=-1$ and $T=5$ in~\eqref{eqn:NLSE}, and consider the initial condition
\begin{equation*}
u_0(x,y)=\big(\sin^2 x+\sin^2 y\big)^{1/2}\in H^{2-}(\mathbb{T}^2).
\end{equation*}
The ELRI~\eqref{eqn:LRI_KSRN} is employed as the FP with the fine time step $\tau = 10^{-3}$. Although the classical schemes provide better initial approximations in the first iteration, the error of the exponential low-regularity parareal method becomes consistently smaller than that of the other schemes after the third iteration. Moreover, it continues to decrease in subsequent iterations, whereas the errors of the other schemes eventually increase at some iteration. As $\Tau$ becomes smaller, the low-regularity parareal exhibits improved performance. A rigorous convergence analysis for this setting is left for future work.

\begin{figure}[htbp!]
    \centering
\includegraphics[width=0.98\textwidth,trim={0.8cm 0.8cm 0.5cm 0.5cm},clip]{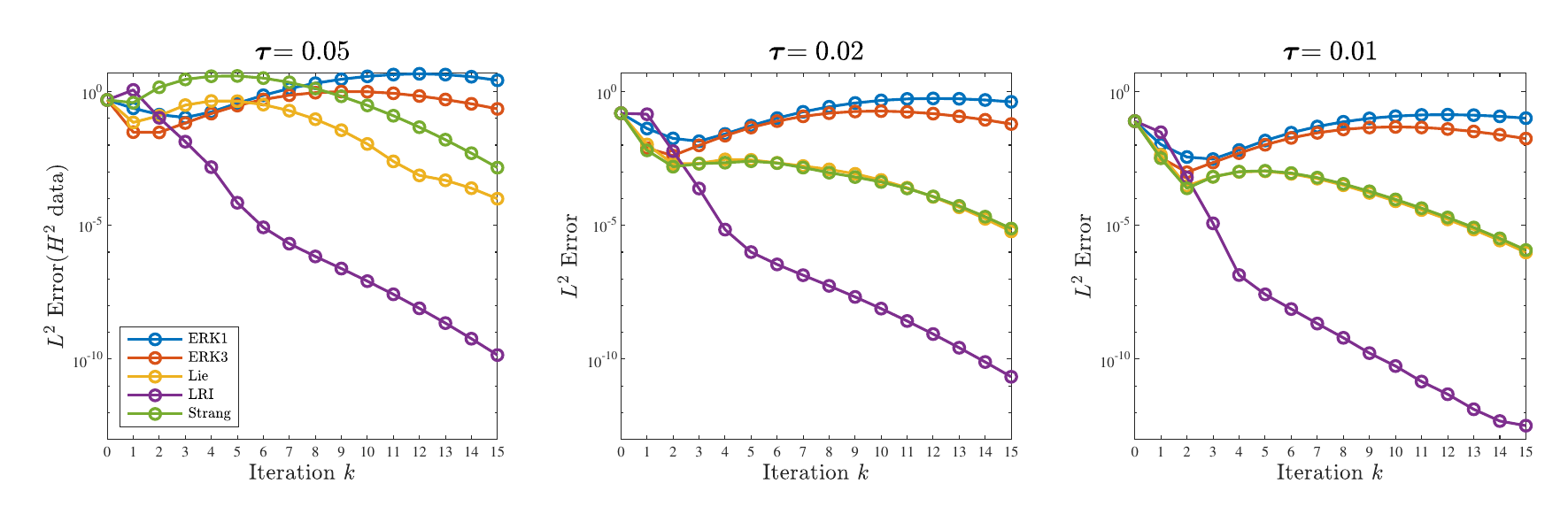}
    \caption{$L^2$ errors of parareal iterations for 2-D cubic NLS with $H^{2-}$ initial data, $\mu = -1$. } 
    \label{fig:NLSE_2D}
\end{figure}

\subsection{The one-dimensional quintic NLS}
We consider the quintic NLS on $\mathbb{T}$, namely, $
F(u,\bar u)=\mu |u|^4u$ and $ d=1$
in~\eqref{eqn:NLSE}. The exponential low-regularity parareal method employs the ELRI~\eqref{eqn:LRI_KSRN} as the CP. We set $\mu=-4$ and $T=16$, and take the initial condition
\[
u_0(x)=\frac{1}{2}|x-\pi|^{1/2}\sin(x-\pi)\in H^{2-}(\mathbb{T}).
\]
The ELRI~\eqref{eqn:LRI_KSRN} is employed as the FP with the fine time step $\tau=10^{-4}$. The exponential low-regularity parareal method exhibits more robust and substantially faster convergence than the alternatives, despite producing the least accurate first-iteration approximation among all CPs. Moreover, its error decreases as $\Tau$ is refined. By contrast, the parareal errors associated with the other CPs either exhibit numerical blow-up or follow a common pattern: they initially increase before eventually decreasing at a superlinear rate, but their overall performance remains poor. A rigorous theoretical investigation of this setting is left for future work.

\begin{figure}[htbp!]
    \centering
\includegraphics[width=0.98\textwidth,trim={0.8cm 0.8cm 0.5cm 0.5cm},clip]{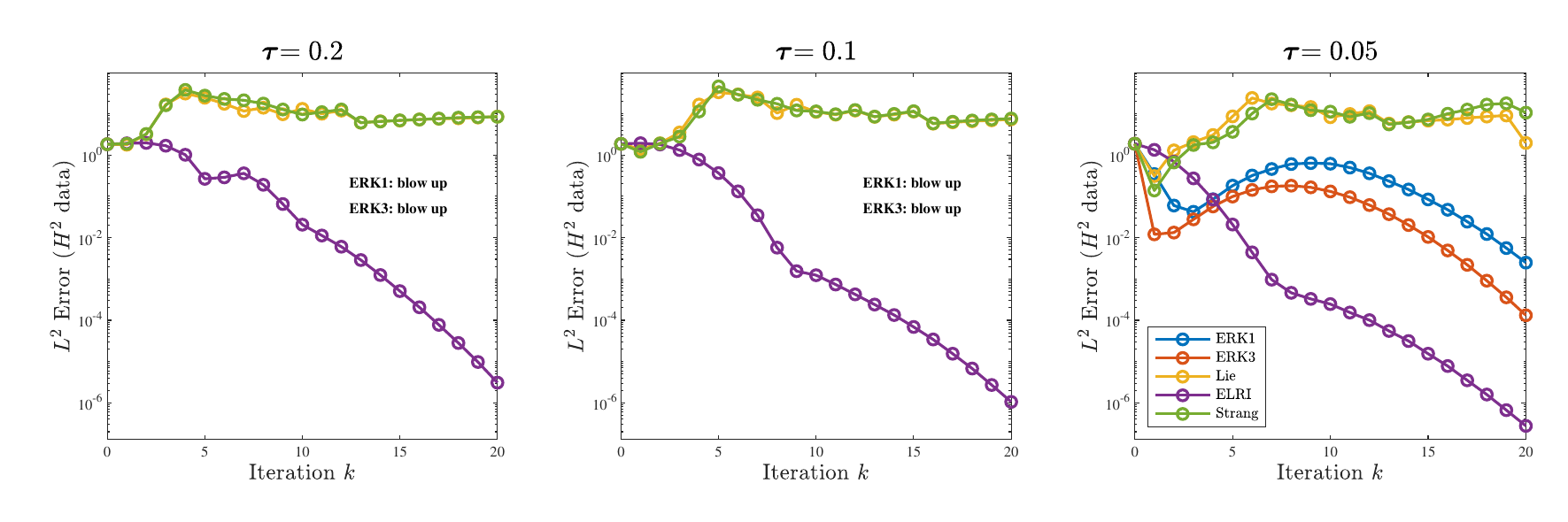}
    \caption{$L^2$ errors of parareal iterations for quintic NLS with $H^{2-}$ initial data, $\mu = -4$, and $T=16$.}
    \label{fig:quintic_SE}
\end{figure}

\section{Concluding remarks}

In this work, we developed and analyzed parareal algorithms for solving NLS equations, assuming an exact fine propagator and focusing on the design of suitable coarse propagators. It is well-known that standard parareal methods may converge slowly or even become unstable for NLS equations, largely due to the absence of damping; moreover, nonlinear interactions can transfer and amplify phase errors across Fourier modes. To address these difficulties, we established a general convergence framework based on stability and local truncation error assumptions for the coarse propagator, while allowing for solutions with limited regularity.
We verified these assumptions for several exponential low-regularity integrators tailored to one-dimensional quadratic and cubic NLS. These integrators are designed to achieve optimal approximation orders without derivative loss, making them particularly suitable as coarse propagators in the parareal framework. With these theoretically justified choices, the resulting exponential low-regularity parareal algorithms are shown to converge linearly, with a contraction factor proportional to the coarse time-step size, even when the underlying solution has only limited regularity.
Extensive numerical experiments for quadratic, cubic, and quintic NLS confirm the rapid convergence of the proposed algorithms. The results also demonstrate their clear advantage over parareal variants based on classical coarse time-stepping schemes, including Lie and Strang splitting methods as well as first- and third-order exponential Runge--Kutta integrators. These findings indicate that exponential low-regularity integrators provide an effective and robust class of coarse propagators for parareal simulation of non-diffusive models.


Several directions remain open for future investigation. First, although the present numerical results demonstrate robust performance for the two-dimensional cubic NLS and the one-dimensional quintic NLS, a rigorous convergence analysis for these cases is still lacking. Extending the current theoretical framework to higher-dimensional problems and higher-order nonlinearities is therefore an important topic for future work. Second, it would be interesting to design and analyze suitable ELRIs for other non-diffusive dispersive or wave-type equations, such as KdV equations, Klein--Gordon equations, and nonlinear wave equations, where the absence of damping and the possible loss of regularity may
pose similar challenges for PinT algorithms. Third, the full-order requirement in condition (ii) of Assumption~\ref{assum:cp} may be relaxed by considering a suboptimal order, or alternatively by formulating the local truncation error assumption in a weaker norm rather than in $H^r$ with $r>d/2$. However, such extensions require a more careful theoretical investigation.

\bibliographystyle{abbrv}
\bibliography{references}

\end{document}


\maketitle

\section{A detailed example}

Here we include some equations and theorem-like environments to show
how these are labeled in a supplement and can be referenced from the
main text.
Consider the following equation:
\begin{equation}
  \label{eq:suppa}
  a^2 + b^2 = c^2.
\end{equation}
You can also reference equations such as \cref{eq:matrices,eq:bb} 
from the main article in this supplement.

\lipsum[100-101]

\begin{theorem}
  An example theorem.
\end{theorem}

\lipsum[102]
 
\begin{lemma}
  An example lemma.
\end{lemma}

\lipsum[103-105]

Here is an example citation: \cite{KoMa14}.

\section[Proof of Thm]{Proof of \cref{thm:bigthm}}
\label{sec:proof}
\lipsum[106-112]

\section{Additional experimental results}
\Cref{tab:foo} shows additional
supporting evidence. 

\begin{table}[htbp]
{\footnotesize
  \caption{Example table}  \label{tab:foo}
\begin{center}
  \begin{tabular}{|c|c|c|} \hline
   Species & \bf Mean & \bf Std.~Dev. \\ \hline
    1 & 3.4 & 1.2 \\
    2 & 5.4 & 0.6 \\ \hline
  \end{tabular}
\end{center}
}
\end{table}

\newpage
\subsubsection{The case $k=0$}
We begin by eliminating the index $k+1$ on the right-hand side of the second equation in \eqref{eqn:coupled} through iterative substitution with respect to $n$:
\begin{align}\label{eqn:k=0}
E_{n+1}^{k+1}&=G_{1}^{2}E_{n-1}^{k+1}+\left( \bar{F}_{1} D_{n}^{k}+\gamma_{0} E_{n}^{k} \right) +G_{1}\left( \bar{F}_{1} D_{n-1}^{k}+\gamma_{0} E_{n-1}^{k} \right) \nonumber \\
&= G_{1}^{n+1} E_{0}^{k+1} + \sum_{i=0}^{n} G_{1}^{i} \left( \bar{F}_1 D_{n-i}^{k} + \gamma_{0} E_{n-i}^{k} \right),
\end{align}
where $E_{0}^{k+1}=U_0^{k+1} - U_0=0$. Setting $k=0$, we obtain:
\begin{align*}
E_{n+1}^{1} &= \sum_{i=0}^{n} G_{1}^{i} \left( F_{1}^{(J)} \left( E_{n-i,-1}^{0} - E_{n-i}^{0} \right) + \gamma_{0} E_{n-i}^{0} \right) \\
&= \sum_{i=0}^{n} G_{1}^{i} F_{1}^{(J)} E_{n-i,-1}^{0} + \sum_{i=0}^{n} G_{1}^{i} \left( \gamma_{0} - F_{1}^{(J)} \right) E_{n-i}^{0}.
\end{align*}
By taking the $L^2$ norm on both sides, we derive the error bound for the case $k=0$,
\begin{equation*}
\| E_{n+1}^{1}\|_{L^{2}} \leq \| F_{1}^{\left( J \right)}\|_{H} \sum_{i=0}^{n} \| E_{i,-1}^{0}\|_{L^{2}} +\| \gamma_{0} -F_{1}^{\left( J \right)}\|_{H} \sum_{i=0}^{n} \| E_{i}^{0}\|_{L^{2}},
\end{equation*}
since $\| G_1 \|_{H} \leq 1$.

\subsubsection{The case $k=1$}
We continue with \eqref{eqn:k=0} in the case $k=0$. Then using the first equation in \eqref{eqn:coupled} to eliminate $D_i^k$ on the right-hand side,
\begin{align*}
E_{n+1}^{k+1} &= \gamma_{0} \sum_{i=0}^{n} G_{1}^{i}E_{n-i}^{k} + \bar{F_{1}} \sum_{i=0}^{n} G_{1}^{i}D_{n-i}^{k} \\
&= \gamma_{0} \sum_{i=0}^{n} G_{1}^{i}E_{n-i}^{k} + \bar{F_{1}} \sum_{i=0}^{n-1} G_{1}^{i}\left( hE_{n-i-1}^{k-1} + \gamma_{1} D_{n-i-1}^{k-1} \right),
\end{align*}
where $D_0^k = E_{0,-1}^k - E_0^k=0$. 
Our next target is to further expand $E_i^k$ on the right-hand side. We have the expression for $E_{n-i}^k$,
\begin{align*}
E_{n-i}^{k} &= G_{1} E_{n-1-i}^{k} + \bar{F_{1}} D_{n-1-i}^{k-1} + \gamma_{0} E_{n-1-i}^{k-1} \\
&= G_{1}^{n-i} E_{0}^{k} + \sum_{j=0}^{n-1-i} G_{1}^{j} \left( \gamma_{0} E_{n-1-i-j}^{k-1} + \bar{F_{1}} D_{n-1-i-j}^{k-1} \right),
\end{align*}
with $E_0^k = U_0^k - U_0=0$. 
Then we continue our expression for the Parareal error $E_{n+1}^{k+1}$:
\begin{align}
E_{n+1}^{k+1} &= \gamma_{0} \sum_{i=0}^{n} G_{1}^{i} \sum_{j=0}^{n-1-i} G_{1}^{j} \left( \gamma_{0} E_{n-1-i-j}^{k-1} + \bar{F_{1}} D_{n-1-i-j}^{k-1} \right) \nonumber \\
&\quad + \bar{F_{1}} h \sum_{i=0}^{n-1} G_{1}^{i} E_{n-1-i}^{k-1} + \bar{F_{1}} \gamma_{1} \sum_{i=0}^{n-1} G_{1}^{i} D_{n-1-i}^{k-1} \nonumber \\
&= \gamma_{0}^{2} \sum_{i=0}^{n} \sum_{j=0}^{n-1-i} G_{1}^{i+j} E_{n-1-i-j}^{k-1} + \gamma_{0} \bar{F_{1}} \sum_{i=0}^{n} \sum_{j=0}^{n-1-i} G_{1}^{i+j} D_{n-1-i-j}^{k-1} \nonumber \\
&\quad + \bar{F_{1}} h \sum_{i=0}^{n-1} G_{1}^{i} E_{n-1-i}^{k-1} + \bar{F_{1}} \gamma_{1} \sum_{i=0}^{n-1} G_{1}^{i} D_{n-1-i}^{k-1} \nonumber \\
&= \sum_{m=0}^{n-1} (m+1) G_{1}^{m} \gamma_{0}^{2} E_{n-1-m}^{k-1} + \gamma_{0} \bar{F_{1}} \sum_{m=0}^{n-1} (m+1) G_{1}^{m} D_{n-1-m}^{k-1} \nonumber\\
&\quad + \bar{F_{1}} h \sum_{i=0}^{n-1} G_{1}^{i} E_{n-1-i}^{k-1} + \bar{F_{1}} \gamma_{1} \sum_{i=0}^{n-1} G_{1}^{i} D_{n-1-i}^{k-1} \nonumber\\
&= \sum_{m=0}^{n-1} G_{1}^{m} \left( (m+1) \gamma_{0}^{2} + \bar{F_{1}} h \right) E_{n-1-m}^{k-1}\nonumber \\
&\quad + \sum_{m=0}^{n-1} G_{1}^{m} \bar{F_{1}} \left( (m+1) \gamma_{0} + \gamma_{1} \right) D_{n-1-m}^{k-1}.\label{eqn:k=1}
\end{align}
Note that the right-hand side of the equality above is only related to the iteration $k-1$. Setting $k=1$ into the equality above, we then obtain
\begin{align*}
E_{n+1}^{2} &= \sum_{m=0}^{n-1} G_{1}^{m} \left( (m+1) \gamma_{0}^{2} + \bar{F}_{1} h \right) E_{n-1-m}^{0} \\
&\quad + \sum_{m=0}^{n-1} G_{1}^{m} \bar{F}_{1} \left( (m+1) \gamma_{0} + \gamma_{1} \right) \left( G_{1} G_{2}^{-1} E_{n-1-m,-1}^{0} - E_{n-1-m}^{0} \right) \\
&= \sum_{m=0}^{n-1} G_{1}^{m} \left( (m+1) \gamma_{0} \left( \gamma_{0} - \bar{F}_{1} \right) + \bar{F}_{1} \left( h - \gamma_{1} \right) \right) E_{n-1-m}^{0} \\
&\quad + \sum_{m=0}^{n-1} G_{1}^{m} \bar{F}_{1} \left( (m+1) \gamma_{0} + \gamma_{1} \right) G_{1} G_{2}^{-1} E_{n-1-m,-1}^{0}.
\end{align*}
After taking the $L^2$ norm on both sides, we then obtain
\begin{align*}
\| E_{n+1}^{2}\|_{L^{2}} &\leq \| \sum_{m=0}^{n-1} G_{1}^{m}\left( \left( m+1 \right) \gamma_{0} \left( \gamma_{0} -\bar{F}_{1} \right) +\bar{F}_{1} \left( h-\gamma_{1} \right) \right) E_{n-1-m}^{0}\|_{L^{2}} \\
&\quad +\| \sum_{m=0}^{n-1} G_{1}^{m}\bar{F}_{1} \left( (m+1)\gamma_{0} +\gamma_{1} \right) G_{1}G_{2}^{-1}E_{n-1-m,-1}^{0}\|_{L^{2}}.
\end{align*}
\red{details in the lemma}For the first term above, we have 
\begin{align*}
& \| \sum_{m=0}^{n-1} G_{1}^{m}\left( \left( m+1 \right) \gamma_{0} \left( \gamma_{0} -\bar{F}_{1} \right) +\bar{F}_{1} \left( h-\gamma_{1} \right) \right) E_{n-1-m}^{0}\|_{L^{2}} \\
& \leq \sup_{p\in \mathbb{N}^{+}} \sum_{m=0}^{n-1} |G_{1,p}^{m}\left( \left( m+1 \right) \gamma_{0,p} \left( \gamma_{0,p} -\bar{F}_{1,p} \right) +\bar{F}_{1,p} \left( h_{p}-\gamma_{1,p} \right) \right) |\sum_{i=0}^{n-1} \| E_{i}^{0}\|_{L^{2}}
\end{align*}
Similarly, for the second term, we have
\begin{align*}
& \left\| \sum_{m=0}^{n-1} G_{1}^{m}\bar{F}_{1} \left( (m+1)\gamma_{0} +\gamma_{1} \right) G_{1}G_{2}^{-1}E_{n-1-m,-1}^{0} \right\|_{L^{2}} \\
& \leq \sup_{p\in \mathbb{N}^{+}} \left| \sum_{m=0}^{n-1} G_{1,p}^{m}\bar{F}_{1,p} \left( \left( m+1 \right) \gamma_{0,p} +\gamma_{1,p} \right) G_{1,p}G_{2,p}^{-1} \right| \sum_{i=0}^{n-1} \left\| E_{i,-1}^{0} \right\|_{L^{2}}.
\end{align*}
Then for $k=1$, we have the following error estimate, 
\begin{align*}
\| E_{n+1}^{2} \|_{L^{2}} 
&\leq \sup_{p\in \mathbb{N}^{+}} \sum_{m=0}^{n-1} \left| G_{1,p}^{m}\left( (m+1) \gamma_{0,p} (\gamma_{0,p} -\bar{F}_{1,p}) + \bar{F}_{1,p} (h_{p}-\gamma_{1,p}) \right) \right| \sum_{i=0}^{n-1} \| E_{i}^{0} \|_{L^{2}} \\
&\quad + \sup_{p\in \mathbb{N}^{+}} \left| \sum_{m=0}^{n-1} G_{1,p}^{m}\bar{F}_{1,p} \left( (m+1)\gamma_{0,p} + \gamma_{1,p} \right) G_{1,p}G_{2,p}^{-1} \right| \sum_{i=0}^{n-1} \left\| E_{i,-1}^{0} \right\|_{L^{2}}.
\end{align*}

\subsubsection{The case $k=2$} 
We continue with \eqref{eqn:k=1} in the case $k=1$. Using the coupled equations \eqref{eqn:coupled}, 
\begin{align*}
E_{n+1}^{k+1} &= \sum_{m=0}^{n-1} G_{1}^{m}\left( \left( m+1 \right) \gamma_{0}^{2} + \bar{F}_{1} h \right) E_{n-1-m}^{k-1} \\
&\quad + \sum_{m=0}^{n-2} G_{1}^{m}\bar{F}_{1} \left( \left( m+1 \right) \gamma_{0} + \gamma_{1} \right) \left( \gamma_{1} D_{n-2-m}^{k-2} + h E_{n-2-m}^{k-2} \right).
\end{align*}
We then further expand $E_{n-1-m}^{k-1}$ and set $k=2$ to obtain
\begin{align*}
E_{n+1}^{3} &= \sum_{m=0}^{n-1} \sum_{j=0}^{n-2-m} G_{1}^{m+j}\left( \left( m+1 \right) \gamma_{0}^{2} +\bar{F}_{1} h \right) \left( \bar{F}_{1} D_{n-2-m-j}^{0}+\gamma_{0} E_{n-2-m-j}^{0} \right) \\
&\quad +\sum_{m=0}^{n-2} G_{1}^{m}\bar{F}_{1} \left( \left( m+1 \right) \gamma_{0} +\gamma_{1} \right) \left( \gamma_{1} D_{n-2-m}^{0}+hE_{n-2-m}^{0} \right) \\
&= \sum_{i=0}^{n-2} G_{1}^{i}\left( \sum_{m=0}^{i} \left( \left( m+1 \right) \gamma_{0}^{2} +\bar{F}_{1} h \right) \right) \left( \bar{F}_{1} D_{n-2-i}^{0}+\gamma_{0} E_{n-2-i}^{0} \right) \\
&\quad +\sum_{m=0}^{n-2} G_{1}^{m}\bar{F}_{1} \left( \left( m+1 \right) \gamma_{0} +\gamma_{1} \right) \left( \gamma_{1} D_{n-2-m}^{0}+hE_{n-2-m}^{0} \right) \\
&= \sum_{i=0}^{n-2} G_{1}^{i} \left( \bar{F}_{1} \left( \left( i+1 \right) \left( \frac{\gamma_{0}^{2} \left( i+2 \right)}{2} +\bar{F}_{1} h+\gamma_{0} \gamma_{1} \right) +\gamma_{1}^{2} \right) D_{n-2-i}^{0} \right. \\
&\quad \left. + \left( \left( i+1 \right) \frac{\gamma_{0}^{3} \left( i+2 \right)}{2} +2\left( i+1 \right) \bar{F}_{1} h\gamma_{0} +\bar{F}_{1} h\gamma_{1} \right) E_{n-2-i}^{0} \right).
\end{align*}

\newpage
\section{Linear Convergence for the Linear Parabolic Problem}

\subsection{Preliminaries}
Consider the following linear parabolic problem
\begin{equation*}
		\left \{
		\begin{aligned}
			u' (t)+ A u(t) &= f(t), \quad 0<t<T,\\
			 u(0)&=u_0 .
		\end{aligned}\right .
\end{equation*}
Thanks to Lemma 10.3 in \cite{thomee2007galerkin}, we are able to give the explicit formula of our FPs:
\begin{align*}
&F\left( T_{0}+n\Delta t,T_{0}+(q-1)\Delta t,u_{0},\cdots ,u_{q-1} \right) \\
    &= \sum_{s=0}^{q-1} \beta_{ns} \left( \Delta tA \right) u_{s} + \Delta t\sum_{j=q}^{n} \beta_{n-j} \left( \Delta tA \right) f\left( T_{0}+j\Delta t \right) \\
    &:= \sum_{s=0}^{q-1} F_{s}^{\left( n-q+1 \right)}(\Delta t A) u_{s} + N^{\left( n-q+1 \right)}\left( f \right) \left( T_{0}+(q-1)\Delta t \right),
\end{align*}
where the \(\beta_j(\lambda)\) and \(\beta_{ns}(\lambda)\) are defined by

$$\widetilde{\beta}(s)=\sum_{j = 0}^{\infty}\beta_j(\lambda)\zeta^j := (\widetilde{\alpha}(\zeta)+\lambda)^{-1}, \quad \beta_{ns}(\lambda)=-\sum_{j = q - s}^{q}\beta_{n - s - j}(\lambda)\alpha_j,$$
with $\tilde{\alpha} \left( \zeta \right) =\alpha_{j} \zeta^{j}$, where the $\alpha_j$ are the coefficients defined by $\tilde{\partial_{q}} U^{n}=\frac{1}{\Delta t} \sum_{j=0}^{q} \alpha_{j} U^{n-j}$.

For convenience, we define several rational functions,  
\begin{equation*}
    h_1(z) = \max_{0 \leq i \leq q-1} \left| \sum_{s=0}^{q-1} \Big( F_{s}^{(J-i)}(z) - F_{s}^{(J)}(z) \Big) \right|,~\gamma_1(z) = \max_{0 \leq i \leq q-1}  \left| F_{s}^{(J-i)}(z) - F_{s}^{(J)}(z) \right|,  
\end{equation*}
\begin{equation*}
    \gamma_0 (z) = \sum_{s=0}^{q-1} F_{s}^{\left( J \right)}\left( z \right) -R\left( Jz \right),~C^{(J)} (z)=\max_{1\leq s\leq q-1} |F_{s}^{\left( J \right)}\left( z \right) |.
\end{equation*}

\newpage
Without loss of generality, we consider the linear problem, $u'+ Au=0$ and the BDF2 scheme as FPs in the Parareal algorithm. Then the multi-step Parareal can be expressed as 
\begin{align} 
\left\{
\begin{aligned}
U_{n+1}^{k+1}&=G_{1}\left( U_{n}^{k+1} \right) +F_{1}\left( U_{n,-1}^{k} \right) +F_{2}\left( U_{n}^{k} \right) -G_{1}\left( U_{n}^{k} \right), \\
U_{n+1,-1}^{k+1}&=G_{2}\left( U_{n}^{k+1} \right) +F_{1}^{\prime}\left( U_{n,-1}^{k} \right) +F_{2}'\left( U_{n}^{k} \right) -G_{2}\left( U_{n}^{k} \right).
\end{aligned}
\right.
\end{align}
The exact solution also satisfies \begin{align*}
\left\{
\begin{aligned}
U_{n+1}      &= F_{1}\left( U_{n,-1} \right) + F_{2}\left( U_{n} \right), \\
U_{n+1,-1} &= F_{1}^{\prime}\left( U_{n,-1} \right) + F_{2}^{\prime}\left( U_{n} \right).
\end{aligned}
\right.
\end{align*} 
Thus, we define the Parareal error $E_n^k = U_n^k - U_n$ and $E_{n,-1}^k = U_{n,-1}^k - U_{n,-1}$, which satisfy
\begin{align}
\left\{
\begin{aligned}
E_{n+1}^{k+1}&=G_{1}\left( E_{n}^{k+1} \right) +F_{1}\left( E_{n,-1}^{k} \right) +F_{2}\left( E_{n}^{k} \right) -G_{1}\left( E_{n}^{k} \right), \\
E_{n+1,-1}^{k+1}&=G_{2}\left( E_{n}^{k+1} \right) +F_{1}^{\prime}\left( E_{n,-1}^{k} \right) +F_{2}'\left( E_{n}^{k} \right) -G_{2}\left( E_{n}^{k} \right).
\end{aligned}
\right.
\end{align}
If $G_2$ is invertible, the second equation in \eqref{eqn:para_err} is equal to 
\begin{equation*}
G_{1}G_{2}^{-1}\left( E_{n+1,-1}^{k+1} \right) =G_{1}\left( E_{n}^{k+1} \right) +G_{1}G_{2}^{-1}F_{1}^{\prime}\left( E_{n,-1}^{k} \right) +G_{1}G_{2}^{-1}F_{2}^{\prime}\left( E_{n}^{k} \right) -G_{1}\left( E_{n}^{k} \right).
\end{equation*}
Given that $F_1',~F_2'$, and $G_1 G_2^{-1}$ mutually commute, this can be rewritten as 
\begin{equation*}
    G_{1}G_{2}^{-1}\left( E_{n+1,-1}^{k+1} \right) =G_{1}\left( E_{n}^{k+1} \right) +F_{1}^{\prime}G_{1}G_{2}^{-1}\left( E_{n,-1}^{k} \right) +F_{2}^{\prime}G_{1}G_{2}^{-1}\left( E_{n}^{k} \right) -G_{1}\left( E_{n}^{k} \right).
\end{equation*}
We further introduce one new variable, $D_{n}^{k}=G_{1}G_{2}^{-1}\left( E_{n,-1}^{k} \right) -E_{n}^{k}$, which satisfies
\begin{align}
D_{n+1}^{k+1} &= \left( F_{1}^{\prime}G_{1}G_{2}^{-1}-F_{1} \right) \left( E_{n,-1}^{k} \right) +\left( F_{2}'{ G_{1}G_{2}^{-1}}-F_{2} \right) \left( E_{n}^{k} \right) \nonumber \\
&= \left( F_{1}^{\prime}-F_{1}G_{2}G_{1}^{-1} \right) \left( G_{1}G_{2}^{-1}\left( E_{n,-1}^{k} \right) -E_{n}^{k} \right) \nonumber \\
&\quad +\left( F_{1}^{\prime}+F_{2}^{\prime}G_{1}G_{2}^{-1}-F_{1}G_{2}G_{1}^{-1}-F_{2} \right) \left( E_{n}^{k} \right) \nonumber \\
&:= \gamma_{1} D_{n}^{k}+hE_{n}^{k}.
\end{align}
The Parareal error $E_{n+1}^{k+1}$ can also be related with $D_n^k$ through
\begin{align*}
E_{n+1}^{k+1} &= G_{1}\left( E_{n}^{k+1} \right) + F_{1}G_{2}G_{1}^{-1}\left( G_{1}G_{2}^{-1}\left( E_{n,-1}^{k} \right) - E_{n}^{k} \right) \\
&\quad + \left( F_{1}G_{2}G_{1}^{-1} + F_{2} - G_1 \right) \left( E_{n}^{k} \right) \\
&:= G_{1} E_{n}^{k+1} + \bar{F_1} D_{n}^{k} + \gamma_{0} E_{n}^{k}. 
\end{align*}
Then we derive the coupled equations between $E_n^k$ and $D_n^k$:
\begin{equation}
    \left\{
\begin{aligned}
D_{n+1}^{k+1} &= \gamma_{1} D_{n}^{k} + h E_{n}^{k}, \\
E_{n+1}^{k+1} &= G_{1} E_{n}^{k+1} + \bar{F}_{1} D_{n}^{k} + \gamma_{0} E_{n}^{k}.
\end{aligned}
\right.
\end{equation}
Our further estimate bases on these coupled equations. Now we start from eliminating the index $k+1$ of the right-hand side of the second equation in \eqref{eqn:coupled} by iterating itself on $n$,
\begin{align*}
E_{n+1}^{k+1} &= G_{1}^{2} E_{n-1}^{k+1}  + \sum_{i=0}^{1} G_{1}^{i}\left( \bar{F}_1 D_{n-i}^{k} + \gamma_{0} E_{n-i}^{k} \right) \\
&= G_{1}^{n+1} E_{0}^{k+1}  + \sum_{i=0}^{n} G_{1}^{i}\left( \bar{F}_1 D_{n-i}^{k} + \gamma_{0} E_{n-i}^{k} \right),
\end{align*}
where $E_{0}^{k+1}=U_{0}^{k+1}-U_{0}=0$. Then using the first equation in \eqref{eqn:coupled} to eliminate $D_i^k$ on the right-hand side,
\begin{align*}
E_{n+1}^{k+1} &= \gamma_{0} \sum_{i=0}^{n} G_{1}^{i}E_{n-i}^{k} + \bar{F_{1}} \sum_{i=0}^{n} G_{1}^{i}D_{n-i}^{k} \\
&= \gamma_{0} \sum_{i=0}^{n} G_{1}^{i}E_{n-i}^{k} + \bar{F_{1}} \sum_{i=0}^{n} G_{1}^{i}\left( hE_{n-i-1}^{k-1} + \gamma_{1} D_{n-i-1}^{k-1} \right).
\end{align*}
Our next target is to further expand $E_i^k$ on the right-hand side, because the numerical experiments indicate that we should derive staggered time steps, and if we don't follow this rule, our estimate is not accurate enough. We have the expression for $E_{n-i}^k$,
\begin{align*}
E_{n-i}^{k} &= G_{1} E_{n-1-i}^{k} + \bar{F_{1}} D_{n-1-i}^{k-1} + \gamma_{0} E_{n-1-i}^{k-1} \\
&= G_{1}^{n-i} E_{0}^{k} + \sum_{j=0}^{n-i} G_{1}^{j} \left( \gamma_{0} E_{n-1-i-j}^{k-1} + \bar{F_{1}} D_{n-1-i-j}^{k-1} \right),
\end{align*}
with $E_0^k = U_0^k - U_0=0$. 
Then we continue our expression for the Parareal error $E_{n+1}^{k+1}$:
\begin{align*}
E_{n+1}^{k+1} &= \gamma_{0} \sum_{i=0}^{n} G_{1}^{i} \sum_{j=0}^{n-i} G_{1}^{j} \left( \gamma_{0} E_{n-1-i-j}^{k-1} + \bar{F_{1}} D_{n-1-i-j}^{k-1} \right) \\
&\quad + \bar{F_{1}} h \sum_{i=0}^{n} G_{1}^{i} E_{n-1-i}^{k-1} + \bar{F_{1}} \gamma_{1} \sum_{i=0}^{n} G_{1}^{i} D_{n-1-i}^{k-1} \\
&= \gamma_{0}^{2} \sum_{i=0}^{n} \sum_{j=0}^{n-i} G_{1}^{i+j} E_{n-1-i-j}^{k-1} + \gamma_{0} \bar{F_{1}} \sum_{i=0}^{n} \sum_{j=0}^{n-i} G_{1}^{i+j} D_{n-1-i-j}^{k-1} \\
&\quad + \bar{F_{1}} h \sum_{i=0}^{n} G_{1}^{i} E_{n-1-i}^{k-1} + \bar{F_{1}} \gamma_{1} \sum_{i=0}^{n} G_{1}^{i} D_{n-1-i}^{k-1} \\
&= \gamma_{0} \sum_{m=0}^{n} (m+1) G_{1}^{m} \gamma_{0}^{2} E_{n-1-m}^{k-1} + \gamma_{0} \bar{F_{1}} \sum_{m=0}^{n} (m+1) G_{1}^{m} D_{n-1-m}^{k-1} \\
&\quad + \bar{F_{1}} h \sum_{i=0}^{n} G_{1}^{i} E_{n-1-i}^{k-1} + \bar{F_{1}} \gamma_{1} \sum_{i=0}^{n} G_{1}^{i} D_{n-1-i}^{k-1} \\
&= \sum_{m=0}^{n} G_{1}^{m} \left( (m+1) \gamma_{0}^{2} + \bar{F_{1}} h \right) E_{n-1-m}^{k-1} \\
&\quad + \sum_{m=0}^{n} G_{1}^{m} \bar{F_{1}} \left( (m+1) \gamma_{0} + \gamma_{1} \right) D_{n-1-m}^{k-1}.
\end{align*}
Note that the right-hand side of the equality above is only related to the iteration $k-1$.\red{$12345$} Now we further expand $D_{n-1-m}^{k-1}$ to iteration $k-3$ based on \eqref{eqn:coupled}: 
\begin{align*}
D_{n-1-m}^{k-1} &= h E_{n-2-m}^{k-2} + \gamma_{1} D_{n-2-m}^{k-2} \\
&= h \left( G_{1} E_{n-3-m}^{k-2} + \gamma_{0} E_{n-3-m}^{k-3} + \bar{F_{1}} D_{n-3-m}^{k-3} \right) + \gamma_{1} D_{n-2-m}^{k-2} \\
&= h \sum_{j=0}^{n-3-m} G_{1}^{j} \left( \gamma_{0} E_{n-3-m-j}^{k-3} + \bar{F_{1}} D_{n-3-m-j}^{k-3} \right) + \gamma_{1} D_{n-2-m}^{k-2}.
\end{align*}
Then we further expand $D_{n-2-m}^{k-2}$ to eliminate iteration $k-2$ on the right-hand side of the above equation, 
\begin{align*}
D_{n-1-m}^{k-1} &= h \sum_{j=0}^{n-3-m} G_{1}^{j} \left( \gamma_{0} E_{n-3-m-j}^{k-3} + \bar{F_{1}} D_{n-3-m-j}^{k-3} \right) + \gamma_{1} h E_{n-3-m}^{k-3} + \gamma_{1}^{2} D_{n-3-m}^{k-3} \\
&= h \gamma_{0} \sum_{j=0}^{n-3-m} G_{1}^{j} E_{n-3-m-j}^{k-3} + \gamma_{1} h E_{n-3-m}^{k-3} + h \bar{F_{1}} \sum_{j=0}^{n-3-m} G_{1}^{j} D_{n-3-m-j}^{k-3} + \gamma_{1}^{2} D_{n-3-m}^{k-3}. 
\end{align*}
Finally, we take the expression of $D_{n-1-m}^{k-1}$ back to the Parareal error $E_{n+1}^{k+1}$,  
\begin{align*}
E_{n+1}^{k+1} &= \sum_{m=0}^{n} G_{1}^{m} \left( (m+1) \gamma_{0}^{2} + \bar{F_{1}} h \right) E_{n-1-m}^{k-1} \\
&\quad + \sum_{m=0}^{n} G_{1}^{m} \bar{F_{1}} \left( (m+1) \gamma_{0} + \gamma_{1} \right) h \left( \gamma_{0} \sum_{j=0}^{n-3-m} G_{1}^{j} E_{n-3-m-j}^{k-3} + \gamma_{1} E_{n-3-m}^{k-3} \right) \\
&\quad + \sum_{m=0}^{n} G_{1}^{m} \bar{F_{1}} \left( (m+1) \gamma_{0} + \gamma_{1} \right) \left( h \bar{F_{1}} \sum_{j=0}^{n-3-m} G_{1}^{j} D_{n-3-m-j}^{k-3} + \gamma_{1}^{2} D_{n-3-m}^{k-3} \right).
\end{align*}
Now we take a close look at the equation above. The first row involves $E_i^{k-1}$, the second row involves $E_i^{k-3}$, and the third row involves $D_i^{k-3}$. To analyze the $L^2$ norm of $E_{n+1}^{k+1}$, we consider the $p$-th spectrum of the elliptic operator $A$ and take the inner product with the eigenfunction $\lambda_p$.  
\begin{align*}
(E_{n+1}^{k+1},\lambda_p) &= \sum_{m=0}^{n} G_{1}^{m} \left( (m+1) \gamma_{0}^{2} + \bar{F_{1}} h \right) (E_{n-1-m}^{k-1},\lambda_p)  \\
&\quad + \sum_{m=0}^{n} G_{1}^{m} \bar{F_{1}} \left( (m+1) \gamma_{0} + \gamma_{1} \right) h \left( \gamma_{0} \sum_{j=0}^{n-3-m} G_{1}^{j} (E_{n-3-m-j}^{k-3},\lambda_p) + \gamma_{1} (E_{n-3-m}^{k-3},\lambda_p) \right) \\
&\quad + \sum_{m=0}^{n} G_{1}^{m} \bar{F_{1}} \left( (m+1) \gamma_{0} + \gamma_{1} \right) \left( h \bar{F_{1}} \sum_{j=0}^{n-3-m} G_{1}^{j} (D_{n-3-m-j}^{k-3},\lambda_p) + \gamma_{1}^{2} (D_{n-3-m}^{k-3},\lambda_p)  \right).
\end{align*}
Let $e_n^k=(E_n^k,\lambda_p)$ and $d_n^k = (D_n^k,\lambda_p)$. We omit the explicit dependence of $e_n^k$, $d_n^k$ and all the operators appear above on $p$ for brevity. 
\begin{align*}
e_{n+1}^{k+1} &= \sum_{m=0}^{n} G_{1}^{m} \left( (m+1) \gamma_{0}^{2} + \bar{F_{1}} h \right) e_{n-1-m}^{k-1} \\
&\quad + \sum_{m=0}^{n} G_{1}^{m} \bar{F_{1}} \left( (m+1) \gamma_{0} + \gamma_{1} \right) h \left( \gamma_{0} \sum_{j=0}^{n-3-m} G_{1}^{j} e_{n-3-m-j}^{k-3} + \gamma_{1} e_{n-3-m}^{k-3} \right) \\
&\quad + \sum_{m=0}^{n} G_{1}^{m} \bar{F_{1}} \left( (m+1) \gamma_{0} + \gamma_{1} \right) \left( h \bar{F_{1}} \sum_{j=0}^{n-3-m} G_{1}^{j} d_{n-3-m-j}^{k-3} + \gamma_{1}^{2} d_{n-3-m}^{k-3} \right).
\end{align*}

\textbf{(Term I)} This term represents the primary error component of $E_{n+1}^{k+1}$, as it incorporates the complete error propagation from the $(k-1)$-th iteration. 
\begin{align*}
\left| \sum_{m=0}^{n} G_{1}^{m}\left( (m+1)\gamma_{0}^{2} +\bar{F_{1}} h \right) e_{n-1-m}^{k-1}\right|
&\leq \max_{i} |e_{i}^{k-1}|\sum_{m=0}^{n} |G_{1}|^{m}\left| (m+1) \gamma_{0}^{2} +\bar{F_{1}} h\right| \\
&\leq \gamma_{a,p} \max_{i} |e_{i}^{k-1}|.
\end{align*}
We have to make this bound clear: 
\begin{equation}\label{eqn:g_a}
\gamma_{a,p}:=\sum_{m=0}^{n} |G_{1}|^{m}\left| (m+1) \gamma_{0,p}^{2} +\bar{F_{1}} h_p\right|. 
\end{equation}

\begin{equation*}
\gamma_{a,p} :=\left( \frac{\gamma_{0,p}}{1-|G_{1}\left( Jz_{p} \right) |} \right)^{2} +\frac{|\bar{F}_{1,p} h_{p}|}{1-|G_{1}\left( Jz_{p} \right) |},
\end{equation*}
where $z_p = \Delta t\lambda_p$ and 
\begin{align*}
\gamma_{0,p} &= \left| \left( F_{1}(z_{p}) G_{2}(Jz_{p}) G_{1}^{-1}(Jz_{p}) + F_{2}(z_{p}) \right) - G_{1}(Jz_{p}) \right|, \\
\bar{F}_{1,p} &= |F_{1}(z_{p}) G_{2}(Jz_{p}) G_{1}^{-1}(Jz_{p})|, \\
h_{p} &= \left| F_{1}'(z_{p}) + F_{2}'(z_{p}) G_{1}(Jz_{p})G_2^{-1}(Jz_p) - F_{1}(z_{p}) G_{2}(Jz_{p}) G_{1}^{-1}(Jz_{p}) - F_{2}(z_{p}) \right|.
\end{align*}

\textbf{(Term II)} We bound for the second summation: 
\begin{align*}
&\left| h\gamma_{0} \bar{F_{1}} \sum_{m=0}^{n} \sum_{j=0}^{n-3-m} G_{1}^{m+j}\left( \left( m+1 \right) \gamma_{0} +\gamma_{1} \right) e_{n-3-m-j}^{k-3} \right. \\
&\quad \left. + h\gamma_{1} \bar{F_{1}} \sum_{m=0}^{n} G_{1}^{m}\left( \left( m+1 \right) \gamma_{0} +\gamma_{1} \right) e_{n-3-m}^{k-3} \right| \\
&\leq \max_{i} |e_{i}^{k-3}| \left( |h\gamma_{0} \bar{F_{1}} |\sum_{m=0}^{n} \sum_{j=0}^{n-3-m} |G_{1}|^{m+j}|\left( m+1 \right) \gamma_{0} +\gamma_{1} | \right) \\
&\quad + \max_{i} |e_{i}^{k-3}| \left( |h\gamma_{1} \bar{F_{1}} |\sum_{m=0}^{n} |G_{1}|^{m}|\left( m+1 \right) \gamma_{0} +\gamma_{1} | \right) \\
&\leq \left( \frac{|\bar{F_{1}} h\gamma_{0}^{2} |}{\left( 1-|G_{1}| \right)^{3}} + \frac{|\bar{F_{1}} h\gamma_{0} \gamma_{1} |}{\left( 1-|G_{1}| \right)^{2}} \right) \max_{i} |e_{i}^{k-3}| \\
&\quad + \left( \frac{|\bar{F_{1}} h\gamma_{0} \gamma_{1} |}{\left( 1-|G_{1}| \right)^{2}} + \frac{|\bar{F_{1}} h\gamma_{1}^{2} |}{1-|G_{1}|} \right) \max_{i} |e_{i}^{k-3}| \\
&\leq  \frac{|\bar{F_{1}} h|}{1-|G_{1}|} \left( |\gamma_{1} |+\frac{|\gamma_{0} |}{1-|G_{1}|} \right)^{2} \max_{i} |e_{i}^{k-3}|.
\end{align*}
We should make the bound clear: 
\begin{equation}\label{eqn:g_b}
    \gamma_{b,p} :=\frac{\bar{F}_{1,p} h_{p}}{1-|G_{1}\left( Jz_{p} \right) |} \left( \gamma_{1,p} +\frac{\gamma_{0,p}}{1-|G_{1}\left( Jz_{p} \right) |} \right)^{2},
\end{equation}
where 
\begin{equation*}
  \gamma_{1,p} =|F_{1}^{\prime}(z_{p})-F_{1}(z_{p})G_{2}(Jz_{p})G_{1}^{-1}(Jz_{p})|.
\end{equation*}

\textbf{(Term III)} 
We analyze the final summation term, specifically examining the misalignment-induced error $d_i^{k-3}$. Our approach involves expressing $d_i^{k+1}$ in terms of $d_i^{k-1}$ and $e_i^{k-1}$, which enables us to establish an upper bound for $d_i^{k+1}$. We start from \eqref{eqn:coupled},
\begin{align*}
d_{n+1}^{k+1} &= h e_{n}^{k} + \gamma_{1} d_{n}^{k} \\
&= h \left( G_{1} e_{n-1}^{k} + \gamma_{0} e_{n-1}^{k-1} + \bar{F_{1}} d_{n-1}^{k-1} \right) + \gamma_{1} d_{n}^{k} \\
&= h \sum_{j=0}^{n-1} G_{1}^{j} \left( \gamma_{0} e_{n-1-j}^{k-1} + \bar{F_{1}} d_{n-1-j}^{k-1} \right) + +\gamma_{1} d_{n}^{k}.
\end{align*}
Then we further expand $d_n^k$ with $d_{n}^{k}=he_{n-1}^{k-1}+\gamma_{1} d_{n-1}^{k-1}$,
\begin{align*}
d_{n+1}^{k+1} &= h\sum_{j=0}^{n-1} G_{1}^{j}\left(\gamma_{0} e_{n-1-j}^{k-1} + \bar{F_{1}} d_{n-1-j}^{k-1}\right) + \gamma_{1}\left(he_{n-1}^{k-1} + \gamma_{1} d_{n-1}^{k-1}\right) \\
&= h\gamma_{0} \sum_{j=1}^{n-1} G_{1}^{j}e_{n-1-j}^{k-1} + h(\gamma_{1}+\gamma_0) e_{n-1}^{k-1}+ h\bar{F_{1}} \sum_{j=1}^{n-1} G_{1}^{j}d_{n-1-j}^{k-1} + (h\bar{F}_1+\gamma_{1}^{2}) d_{n-1}^{k-1}.
\end{align*}
We take the absolute norm on both sides and derive the bound for $d_{n+1}^{k+1}$, 
\begin{align*}
    |d_{n+1}^{k+1}| &\leq \left( \frac{|G_{1} h \gamma_{0}|}{1 - |G_{1}|} + |h (\gamma_{0} + \gamma_{1})| \right) \max_{i} |e_{i}^{k-1}| 
+ \left( \frac{|G_{1} h \bar{F}_{1}|}{1 - |G_{1}|} + |h \bar{F}_{1} + \gamma_{1}^{2}| \right) \max_{i} |d_{i}^{k-1}|\\
&:= \gamma_{d,p}\max_{i}|e_i^{k-1}| + \gamma_{e,p} \max_{i} |d_i^{k-1}|,
\end{align*}
where we define the linear bounds, 
\begin{equation}\label{eqn:g_d,d_e}
    \gamma_{d,p} =\frac{|G_{1}\left( Jz_{p} \right) h_{p}\gamma_{0,p} |}{1-|G_{1}\left( Jz_{p} \right) |} +|h_{p}\left( \gamma_{0,p} +\gamma_{1,p} \right) |,~\gamma_{e,p} =\frac{|G_{1}\left( Jz_{p} \right) h_{p}\bar{F}_{1,p} |}{1-|G_{1}\left( Jz_{p} \right) |} +|h_{p}\bar{F}_{1,p} +\gamma_{1,p}^{2} |.
\end{equation}
Then we derive the bound for the last summation, 
\begin{align*}
&\left| \sum_{m=0}^{n} G_{1}^{m}\bar{F}_{1} \left( (m+1)\gamma_{0} + \gamma_{1} \right) \left( h\bar{F}_{1} \sum_{j=0}^{n-3-m} G_{1}^{j}d_{n-3-m-j}^{k-3} + \gamma_{1}^{2} d_{n-3-m}^{k-3} \right) \right| \\
&\leq \left| h\bar{F}_{1}^{2} \sum_{m=0}^{n} \sum_{j=0}^{n-3-m} G_{1}^{m+j} \left( (m+1)\gamma_{0} + \gamma_{1} \right) d_{n-3-m-j}^{k-3} \right| \\
&\quad + \left| \gamma_{1}^{2} \bar{F}_{1} \sum_{m=0}^{n} G_{1}^{m} \left( (m+1)\gamma_{0} + \gamma_{1} \right) d_{n-3-m}^{k-3} \right| \\
&\leq \left( \frac{|h\bar{F}_{1}^{2} \gamma_{0}|}{(1-|G_{1}|)^{3}} + \frac{|h\bar{F}_{1}^{2} \gamma_{1}|}{(1-|G_{1}|)^{2}} \right) \max_{i} |d_{i}^{k-3}|  + \left( \frac{|\bar{F}_{1} \gamma_{0} \gamma_{1}^{2}|}{(1-|G_{1}|)^{2}} + \frac{|\bar{F}_{1} \gamma_{1}^{3}|}{1-|G_{1}|} \right) \max_{i} |d_{i}^{k-3}|\\
&=  \left( \frac{|h\bar{F}_{1}^{2} \gamma_{0}|}{(1-|G_{1}|)^{3}} + \frac{|h\bar{F}_{1}^{2} \gamma_{1}|}{(1-|G_{1}|)^{2}} + \frac{|\bar{F}_{1} \gamma_{0} \gamma_{1}^{2}|}{(1-|G_{1}|)^{2}} + \frac{|\bar{F}_{1} \gamma_{1}^{3}|}{1-|G_{1}|} \right) \max_{i} |d_{i}^{k-3}|.
\end{align*}
Then we clarify the bound, but this is a little bit complicated, 
\begin{equation}\label{eqn:g_c}
    \gamma_{c,p} =\frac{h_{p}\bar{F}_{1,p}^{2} \gamma_{0,p}}{\left( 1-|G_{1}\left( Jz_{p} \right) | \right)^{3}} +\frac{h_{p}\bar{F}_{1,p}^{2} \gamma_{1,p}}{\left( 1-|G_{1}\left( Jz_{p} \right) | \right)^{2}} +\frac{\bar{F}_{1,p} \gamma_{0,p} \gamma_{1,p}^{2}}{\left( 1-|G_{1}\left( Jz_{p} \right) | \right)^{2}} +\frac{\bar{F}_{1,p} \gamma_{1,p}^{3}}{1-|G_{1}\left( Jz_{p} \right) |}.
\end{equation}

Finally, we arrive at the iteration on index $k$, based on the estimates of the three terms: 
\begin{align*}
\left\{
\begin{aligned}
|e_{n+1}^{k+1}| &\leq \gamma_{a,p} \max_{i} |e_{i}^{k-1}| + \gamma_{b,p} \max_{i} |e_{i}^{k-3}| + \gamma_{c,p} \max_{i} |d_{i}^{k-3}|, \\
|d_{n}^{k-1}|   &\leq \gamma_{d,p} \max_{i} |e_{i}^{k-3}| + \gamma_{e,p} \max_{i} |d_{i}^{k-3}|,
\end{aligned}
\right.
\end{align*}
where the left-hand side of the first equation is independent of $n$, then we take the maximum over $n$. We further introduce $x_k=\max_{i}|e_i^{k}|$ and $y_k = \max_{i} |d_i^k|$ and obtain
\begin{align}\label{eqn:bound}
\left\{
\begin{aligned}
x_{k+1} &\leq \gamma_{a,p} x_{k-1} + \gamma_{b,p} x_{k-3} + \gamma_{c,p} y_{k-3}, \\
y_{k-1} &\leq \gamma_{d,p} x_{k-3} + \gamma_{e,p} y_{k-3}.
\end{aligned}
\right.
\end{align}
Next we study the recursion, 
\begin{align*}
x_{k+1} &\leq \gamma_{a,p} x_{k-1}+\gamma_{b,p} x_{k-3}+\gamma_{c,p} \left( \gamma_{d,p} x_{k-5}+\gamma_{e,p} y_{k-5} \right) \\
&\leq \gamma_{a,p} x_{k-1}+\gamma_{b,p} x_{k-3}+\gamma_{c,p} \gamma_{d,p} x_{k-5}+\gamma_{c,p} \gamma_{e,p} y_{k-5} \\
&\leq \gamma_{a,p} x_{k-1}+\gamma_{b,p} x_{k-3}+\gamma_{c,p} \gamma_{d,p} x_{k-5}+\gamma_{c,p} \gamma_{e,p} \left( \gamma_{d,p} x_{k-7}+\gamma_{e,p} y_{k-7} \right) \\
&\leq \gamma_{a,p} x_{k-1}+\gamma_{b,p} x_{k-3}+\gamma_{c,p} \gamma_{d,p} \left( x_{k-5}+\gamma_{e,p} x_{k-7} \right) +\gamma_{c,p} \gamma_{e,p}^{2} y_{k-7} \\
&\leq \gamma_{a,p} x_{k-1}+\gamma_{b,p} x_{k-3}+\gamma_{c,p} \gamma_{d,p} \sum_{i=0}^{\lfloor \frac{k-5}{2} \rfloor} \gamma_{e,p}^{i} x_{k-5-2i}+\gamma_{c,p} \gamma_{e,p}^{\lfloor \frac{k-5}{2} \rfloor +1} y_{k-5-2\lfloor \frac{k-5}{2} \rfloor}.
\end{align*}

\subsubsection{Special case}
If $G_2$ is not invertible in \eqref{eqn:para_err}, we first take the inner product of both sides with $\lambda_p$,
\begin{align*}
\left\{
\begin{aligned}
e_{n+1}^{k+1} &= G_{1}(Jz_{p}) e_{n}^{k+1} + F_{1}(z_{p}) e_{n,-1}^{k} + F_{2}(z_{p}) e_{n}^{k} - G_{2}(Jz_{p}) e_{n}^{k}, \\
e_{n+1,-1}^{k+1} &= G_{2}(Jz_{p}) e_{n}^{k+1} + F_{1}'(z_{p}) e_{n,-1}^{k} + F_{2}'(z_{p}) e_{n}^{k} - G_{2}(Jz_{p}) e_{n}^{k}.
\end{aligned}
\right.
\end{align*}
We consider the case when $G_2(Jz_p)=0$, then the equations above reduce to XXX. \red{also consider G1=0}

\subsection{Comparison} 
The theoretical estimate is almost done, only the last step left. In this section, we compare the convergence rate between the multi-step Parareal by Maday and by our newly proposed update, through \eqref{eqn:bound}. In Maday's work, they choose $G_1 = G_2=R(\Delta TA)$ in \eqref{eqn:m-para}. We propose a more reasonable update: $G_1=R(\Delta TA)$ and $G_2 = R((\Delta T-\Delta t)A)$ in \eqref{eqn:m-para}. Next, we will illustrate these two algorithms through the recursion \eqref{eqn:bound}. 

\subsubsection{$\gamma_{a,p}$}
The expression of the convergence function $\gamma_{a,p}$ is given in \eqref{eqn:g_a}. This can be further bounded by 
\begin{equation*}
\gamma_{a,p} \leq\left( \frac{\gamma_{0,p}}{1-|G_{1}\left( Jz_{p} \right) |} \right)^{2} +\frac{|\bar{F}_{1,p} h_{p}|}{1-|G_{1}\left( Jz_{p} \right) |}.
\end{equation*}
However, this bound is not sharp when $\gamma_{0}^2(z_p)$ and $\bar{F}_1(z_p)h_p(z_p)$ have opposite signs. Then we plot the value of $\gamma_{a,p}$ when $n=1000$ to illustrate the behavior of different updates. In Fig.~\ref{fig:gamma_a_BE}, we plot the graph of $\gamma_a$ when CP is BE. As $J$ increases, the supremum of $\gamma_a$ also becomes larger. In this case, the benefit of the new update can not be observed since BE is not an accuracy solver. For comparison, in Fig.~\ref{fig:gamma_a_Exact}, we plot the graph of $\gamma_a (z)$ when we choose the exact solver as the CP, i.e., $R(z)=\text{exp}(-z)$. Different from the case for BE, as $J$ increases, the supremum of $\gamma_a$ decreases for both updates. Note that the supremum in the new update is much smaller than that one in the Maday's update, which implies that our new update is more reasonable. This can be explained through the important gradient in $\gamma_a$: $\gamma_0$ defined in \eqref{eqn:g_0},
\begin{align*}
    \gamma_{0} \left( z \right) &=F_{1}\left( z \right) +F_{2}\left( z \right) -\text{exp} \left( -Jz \right);&~\text{(Maday)}\\
    \gamma_{0} \left( z \right) &=F_{1}\left( z \right) \text{exp} \left( z \right) +F_{2}\left( z \right) -\text{exp} \left( -Jz \right);&~\text{(New)}.
\end{align*}
Obviously, the new update is more reasonable since $F_1$ and $F_2$ should not be in the same position. 

\begin{figure}[htbp]
    \centering
    \begin{subfigure}[b]{0.48\textwidth}
\includegraphics[width=\linewidth]{Gamma_a_BE_Left.pdf}
    \end{subfigure}
    \begin{subfigure}[b]{0.48\textwidth}
        \includegraphics[width=\linewidth]{Gamma_a_BE_Right.pdf}
    \end{subfigure}
    \caption{The graph of $\gamma_a(z)$ when BE is chosen as the CP. Left: Maday's update; Right: New update.}
    \label{fig:gamma_a_BE}
\end{figure}

\begin{figure}[htbp]
    \centering
    \begin{subfigure}[b]{0.48\textwidth}
\includegraphics[width=\linewidth]{Gamma_a_Exact_Left.pdf}
    \end{subfigure}
    \begin{subfigure}[b]{0.48\textwidth}
        \includegraphics[width=\linewidth]{Gamma_a_Exact_Right.pdf}
    \end{subfigure}
    \caption{The graph of $\gamma_a(z)$ when the exact solver is chosen as the CP. Left: Maday's update; Right: New update.}
    \label{fig:gamma_a_Exact}
\end{figure}

\subsubsection{$\gamma_{b,p}$}
The expression of $\gamma_{b,p}$ is given in \eqref{eqn:g_b}. 
\begin{figure}[htbp]
    \centering
    \begin{subfigure}[b]{0.48\textwidth}
\includegraphics[width=\linewidth]{Gamma_b_BE_Left.pdf}
    \end{subfigure}
    \begin{subfigure}[b]{0.48\textwidth}
        \includegraphics[width=\linewidth]{Gamma_b_BE_Right.pdf}
    \end{subfigure}
    \caption{The graph of $\gamma_b(z)$ when BE is chosen as the CP. Left: Maday's update; Right: New update.}
    \label{fig:gamma_b_BE}
\end{figure}

\begin{figure}[htbp]
    \centering
    \begin{subfigure}[b]{0.48\textwidth}
\includegraphics[width=\linewidth]{Gamma_b_Exact_Left.pdf}
    \end{subfigure}
    \begin{subfigure}[b]{0.48\textwidth}
        \includegraphics[width=\linewidth]{Gamma_b_Exact_Right.pdf}
    \end{subfigure}
    \caption{The graph of $\gamma_b(z)$ when the exact solver is chosen as the CP. Left: Maday's update; Right: New update.}
    \label{fig:gamma_b_Exact}
\end{figure}

\subsubsection{$\gamma_{c,p}\gamma_{d,p}$}
The expression of $\gamma_{c,p}\gamma_{d,p}$ is given in \eqref{eqn:g_c} and \eqref{eqn:g_d,d_e}.
\begin{figure}[htbp]
    \centering
    \begin{subfigure}[b]{0.48\textwidth}
\includegraphics[width=\linewidth]{Gamma_cd_BE_Left.pdf}
    \end{subfigure}
    \begin{subfigure}[b]{0.48\textwidth}
    \includegraphics[width=\linewidth]{Gamma_cd_BE_Right.pdf}
    \end{subfigure}
    \caption{The graph of $\gamma_c(z)\gamma_d(z)$ when BE is chosen as the CP. Left: Maday's update; Right: New update.}
    \label{fig:gamma_cd_BE}
\end{figure}

\begin{figure}[htbp]
    \centering
    \begin{subfigure}[b]{0.48\textwidth}
\includegraphics[width=\linewidth]{Gamma_cd_Exact_Left.pdf}
    \end{subfigure}
    \begin{subfigure}[b]{0.48\textwidth}
        \includegraphics[width=\linewidth]{Gamma_cd_Exact_Right.pdf}
    \end{subfigure}
    \caption{The graph of $\gamma_c(z)\gamma_d(z)$ when the exact solver is chosen as the CP. Left: Maday's update; Right: New update.}
    \label{fig:gamma_cd_Exact}
\end{figure}

\subsubsection{$\gamma_{c,p}\gamma_{d,p} \gamma_{e,p}$}
The expression of $\gamma_{c,p}\gamma_{d,p} \gamma_{e,p}$ is given in \eqref{eqn:g_c} and \eqref{eqn:g_d,d_e}.
\begin{figure}[htbp]
    \centering
    \begin{subfigure}[b]{0.48\textwidth}
\includegraphics[width=\linewidth]{Gamma_cde_BE_Left.pdf}
    \end{subfigure}
    \begin{subfigure}[b]{0.48\textwidth}
        \includegraphics[width=\linewidth]{Gamma_cde_BE_Right.pdf}
    \end{subfigure}
    \caption{The graph of $\gamma_c(z)\gamma_d(z)\gamma_e(z)$ when BE is chosen as the CP. Left: Maday's update; Right: New update.}
    \label{fig:gamma_cde_BE}
\end{figure}

\begin{figure}[htbp]
    \centering
    \begin{subfigure}[b]{0.48\textwidth}
\includegraphics[width=\linewidth]{Gamma_cde_Exact_Left.pdf}
    \end{subfigure}
    \begin{subfigure}[b]{0.48\textwidth}
        \includegraphics[width=\linewidth]{Gamma_cde_Exact_Right.pdf}
    \end{subfigure}
    \caption{The graph of $\gamma_c(z)\gamma_d(z) \gamma_e (z)$ when the exact solver is chosen as the CP. Left: Maday's update; Right: New update.}
    \label{fig:gamma_cde_Exact}
\end{figure}

\bibliographystyle{siamplain}
\bibliography{references}